\newcommand{\red}[1]{\textcolor{red}{#1}}
\newcommand{\nc}{\newcommand}
\newcommand{\textdef}[1]{\textit{#1}\index{#1}}
\def\Ss{{\bf S}}
\def\Sn{{\bf S}^n}
\def\Snp{\Sn_+}
\def\Sntop{{\Ss^{n^2+1}_+}}
\def\R{{\bf R}}
\def\Rn{\R^n}
\def\E{{\mathbb{\bf E}}}
\def\F{{\bf \mathbb{\bf F}}}
\def\Sc{\Ss}
\def\aff{{\rm aff}\,}
\def\sing{{\rm sing}}
\def\Rnp{\Rn_+}
\nc{\ip}[2]{\mbox{$\langle #1,#2 \rangle$}}
\nc{\pf}{\noindent{\bf Proof\ \ }}
\nc{\finpf}{\hfill{$\Box$}\linespace}
\nc{\linespace}{\vspace{\baselineskip} \noindent}
\nc{\K}{\mathcal{K}}
\nc{\cl}{\mbox{\rm cl}\,}
\nc{\cond}{\mbox{\rm cond}}
\nc{\cls}{ \mbox{{\scriptsize {\rm cl}}}\,}
\nc{\conv}{\mbox{\rm conv}}
\nc{\rb}{\mbox{\rm rb}\,}
\nc{\inter}{\mbox{\rm int}\,}
\nc{\kernel}{\mbox{\rm ker}\,}
\nc{\bd}{\mbox{\rm bd}\,}
\nc{\spann}{\mbox{\rm span}\,}
\nc{\epi}{\mbox{\rm epi}\,}
\nc{\gph}{\mbox{\rm gph}\,}
\nc{\rge}{\mbox{\rm rge}\,}
\nc{\val}{\mbox{\rm\bf val}}
\nc{\rgel}{\mbox{\rm {\scriptsize rge}}\,}
\nc{\sepi}{\mbox{\rm {\scriptsize epi}}\,}
\nc{\dom}{\mbox{\rm dom}\,}
\nc{\detr}{\mbox{\rm det}\,}
\nc{\para}{\mbox{\rm par}\,}
\nc{\proj}{\mbox{\rm proj}}
\nc{\dist}{\mbox{\rm dist}}
\nc{\crit}{\mbox{\rm crit}\,}
\nc{\cone}{\mbox{\rm cone}\,}
\nc{\mult}{\mbox{\rm mult}\,}
\nc{\rad}{\mbox{\rm rad}}
\nc{\sdom}{\mbox{\rm {\scriptsize dom}}\,}
\nc{\supp}{\mbox{\rm supp}\,}
\newcommand{\Pprob}{\textbf{(P)}}
\newcommand{\Pprobp}{\textbf{(P)}\,}
\newcommand{\Dprob}{\textbf{(D)}}
\newcommand{\Dprobp}{\textbf{(D)}\,}
\newcommand{\SNL}{\text{SNL}\, }
\newcommand{\SNLp}{\text{SNL}}
\newcommand{\PSD}{\text{PSD}\, }
\newcommand{\PSDp}{\text{PSD}}
\newcommand{\CQ}{\text{CQ}\, }
\newcommand{\CQp}{\text{CQ}}
\newcommand{\MFCQ}{\text{MFCQ}\, }
\newcommand{\MFCQp}{\text{MFCQ}}
\newcommand{\LP}{\text{LP}\, }
\newcommand{\LPp}{\text{LP}}
\newcommand{\SDP}{\text{SDP}\, }
\newcommand{\SDPp}{\text{SDP}}
\newcommand{\EDMC}{\text{EDMC}\, }
\newcommand{\EDMCp}{\text{EDMC}}
\newcommand{\EDM}{\text{EDM}\, }
\newcommand{\EDMp}{\text{EDM}}
\newcommand{\LRMC}{\text{LRMC}\, }
\newcommand{\LRMCp}{\text{LRMC}}
\newcommand{\QAP}{\text{QAP}\, }
\newcommand{\QAPp}{\text{QAP}}
\newcommand{\MC}{\textbf{MC}\, }
\newcommand{\MCp}{\textbf{MC}}
\newcommand{\GPp}{\textbf{GP}}
\newcommand{\FR}{\text{FR}\, }
\newcommand{\FRp}{\text{FR}}
\newcommand{\JJ}{{\mathcal J} }
\newcommand{\RR}{{\mathcal R} }
\newcommand{\EE}{{\mathcal E} }
\newcommand{\FF}{{\mathcal F} }
\newcommand{\GG}{{\mathcal G} }
\newcommand{\PP}{{\mathcal P} }
\newcommand{\En}{{{\mathcal E}^n} }
\newcommand{\con}{\rm cond}
\newcommand{\A}{{\mathcal A}}
\newcommand{\N}{{\mathcal N\,}}
\newcommand{\bbm}{\begin{bmatrix}}
	\newcommand{\ebm}{\end{bmatrix}}
\newcommand{\bem}{\begin{pmatrix}}
	\newcommand{\eem}{\end{pmatrix}}
\newcommand{\beq}{\begin{equation}}
\newcommand{\beqs}{\begin{equation*}}
\newcommand{\bet}{\begin{table}}
\newcommand{\eeq}{\end{equation}}
\newcommand{\eeqs}{\end{equation*}}
\newcommand{\beqr}{\begin{eqnarray}}
\DeclareMathOperator{\rank}{{rank}}
\DeclareMathOperator{\ri}{{ri}}
\DeclareMathOperator{\face}{{face}}
\DeclareMathOperator{\KK}{{\mathcal K} }
\DeclareMathOperator{\kvec}{{vec}}
\DeclareMathOperator{\trace}{{tr}}
\DeclareMathOperator{\tr}{{trace}}
\DeclareMathOperator{\diag}{{diag}}
\DeclareMathOperator{\Diag}{{Diag}}
\DeclareMathOperator{\embdim}{{embdim}}
\DeclareMathOperator{\offDiag}{{offDiag}}
\DeclareMathOperator{\svec}{{s2vec}}
\DeclareMathOperator{\Range}{range}%
\DeclareMathOperator{\range}{{range}}
\DeclareMathOperator{\nul}{{null}}
\DeclareMathOperator{\Null}{{null}}
\nc{\arrow}{{\rm arrow\,}}
\nc{\Arrow}{{\rm Arrow\,}}
\nc{\BoDiag}{{\rm B^0Diag\,}}
\nc{\bodiag}{{\rm b^0diag\,}}
\nc{\Mm}{{\mathcal M}^{m} }
\nc{\Mmn}{{\mathcal M}^{mn} }
\nc{\Mnr}{{\mathcal M}_{nr} }
\nc{\Mnmr}{{\mathcal M}_{(n-1)r} }
\nc{\kwqqp}{Q{$^2$}P\,}
\nc{\kwqqps}{Q{$^2$}Ps}
\nc{\notinaho}{(X,S)\in \overline{AHO}(\A)}
\nc{\inaho}{(X,S)\in AHO(\A)}
\newcommand{\bea}{\begin{eqnarray}}%
\newcommand{\eea}{\end{eqnarray}}%
\newcommand{\beas}{\begin{eqnarray*}}%
	\newcommand{\eeas}{\end{eqnarray*}}%
\newcommand{\Rmn}{\R^{m \times n}}%
\newcommand{\Int}{{\rm int\,}}
	\newcommand{\Hnp}[1][]{\,\mathbb{H}_+^{\ifthenelse{\equal{#1}{}}{n}{#1}}}
	\newcommand{\Hn}[1][]{\,\mathbb{H}^{\ifthenelse{\equal{#1}{}}{n}{#1}}}
	\newcommand{\Dn}[1][]{\,\mathbb{D}^{\ifthenelse{\equal{#1}{}}{n}{#1}}}
\title{The many faces of degeneracy\\
in\\
conic optimization\\
	{\tiny last modified on \today }
}
\author{
Dmitriy Drusvyatskiy \\
Department of Mathematics\\
University of Washington \\
\url{ddrusv@uw.edu}
\and
Henry Wolkowicz \\
Faculty of Mathematics \\
University of Waterloo \\
\url{hwolkowicz@uwaterloo.ca}
}
\begin{document}

\copyrightowner{}

\frontmatter  

\maketitle

\tableofcontents
\listoffigures
\listoftables

\mainmatter

\begin{abstract}
Slater's condition -- existence of a ``strictly feasible solution'' -- 
is a common assumption in conic optimization. Without strict feasibility, first-order optimality conditions may be 
meaningless, the dual problem may yield little information about the primal, 
and small changes in the data may render the problem infeasible. 
Hence, failure of strict feasibility can negatively impact off-the-shelf numerical methods, such as primal-dual interior point methods, in particular. 
New optimization modelling techniques and convex relaxations for hard 
nonconvex problems have shown that the loss of strict feasibility is a 
 more pronounced phenomenon than has previously been realized. 
In this text, we describe various reasons for the loss of strict 
feasibility, whether due to poor modelling choices or 
(more interestingly) rich underlying structure, and discuss ways to cope 
with it and, in many pronounced cases, how to use it as an advantage. 
In large part, we emphasize the facial reduction preprocessing technique due to its mathematical elegance, 
geometric transparency, and computational potential. 
\end{abstract}

\chapter{What this paper is about}
Conic optimization has proven to be an elegant and powerful modeling tool with
surprisingly many applications. The classical \emph{linear programming}  
\index{\LPp, linear program} 
\index{linear program, \LPp} 
problem revolutionized operations research and
is still the most widely used optimization model. This is due to the elegant theory
and the ability to solve in practice both small and large scale problems efficiently
and accurately by the well known simplex method of Dantzig
\cite{Dant:63} and by more recent interior-point methods,
e.g.,~\cite{SWright:96,int:Nesterov5}. The size (number of variables)
of linear programs that could be solved before
the interior-point revolution was on the order of tens of
thousands, whereas it immediately increased to millions for many applications.
A large part of modern success is due to \textdef{preprocessing}, which aims to identify (primal and dual slack) variables that are identically zero
on the feasible set.
The article~\cite{springerlink:10.1007/s00291-003-0130-x} is a good reference.

The story does not end with linear programming.
Dantzig himself recounts in~\cite{Dant:90}: ``the world is nonlinear''.
Nonlinear models can significantly improve on linear programs if they can be solved
efficiently. Conic optimization has shown its worth in its elegant
theory, efficient algorithms, and many applications
e.g.,~\cite{SaVaWo:97,AnjosLasserre:11,MR1857264}. Preprocessing to rectify possible loss of ``strict-feasibility'' in the primal or the dual problems is appealing for general conic optimization as well. 
In contrast to linear programming, however, the area of preprocessing for  conic optimization is in its infancy; see 
e.g.,~\cite{Cheung:2013,ScTuWominimal:07,ScTuWonumeric:07,MR3108446,permfribergandersen}
and Section \ref{sect:relworkone}, below.
In contrast to linear programming, numerical error makes preprocessing difficult in full generality.
This being said, surprisingly, there are many specific applications 
of conic optimization, where the rich underlying structure makes 
preprocessing possible, leading to greatly simplified models and strengthened 
algorithms. Indeed, exploiting structure is essential for making 
preprocessing viable. In this article, we present the background
and the elementary theory of such regularization techniques in the framework 
of \textdef{facial reduction (\FRp)}. We focus on notable case studies, where such techniques have proven to be useful.
\index{\FRp, facial reduction}

\section{Related work}
\label{sect:relworkone}
To put this text in perspective, it is instructive to consider nonlinear programming.
Nontrivial statements in constrained nonlinear optimization always rely on some regularity of the constraints. To illustrate, consider a minimization problem over a set of the form $\{x:f(x)=0\}$ for some smooth $f$. How general are such constraints? A celebrated  result of Whitney \cite{Whitney} shows that {\em any} closed set in a Euclidean space can written as a zero-set of some $C^{\infty}$-smooth function $f$. Thus, in this generality, there is little difference between minimizing over arbitrary closed sets and sets of the form $\{x:f(x)=0\}$, for smooth $f$. 
Since little can be said about optimizing over arbitrary closed sets, one must make an assumption on the equality constraint. The simplest one, eliminating Whitney's construction, is that the gradient of $f$ is nonzero on the feasible region -- the earliest form of a constraint qualification. There have been numerous papers, developing weakened versions of regularity (and optimality conditions) in nonlinear programming; some good examples are \cite{Guig:69,bw4,bw2}.

The Slater constraint qualification, we discuss in this text, is in a similar spirit, but in the context of (convex) conic optimization. Some good early references on the geometry of the Slater condition, and weakened variants, are \cite{MR45:6415,Massam:79,MR663327,w8,MR607673}. The concept of facial reduction for general convex programs was introduced in~\cite{bw1,bw3}, while
an early application to a semi-definite type best-approximation
problem was given in~\cite{w11}. Recently, there has been a significant renewed interest in facial reduction, in large part due to the success in applications for graph related problems, such as Euclidean distance
matrix completion and molecular conformation \cite{kriswolk:09,krislock:2010,ChDrWo:14,Alipanahi:2012} and in polynomial optimization \cite{perm, perm_sos, coj_spars,waki_mur_sparse,WakiKimKojimaMura:06}.
In particular, a more modern explanation of the facial reduction procedure can be found in \cite{GPatakiLiu:15, P00, MR3108446, MR2724357,MR3063940}.

We note in passing that numerous papers show that strict feasibility
holds ``generically'' with respect to unstructured perturbations. 
In contrast, optimization problems appearing in applications are often highly structured and such genericity results are of little practical use.

\section{Outline of the paper}
The paper is divided into two parts.
In Part \ref{part:theory}, we present the necessary theoretical grounding
in conic optimization, including basic optimality and duality theory, connections of Slater's condition to the \emph{distance to infeasibility} and sensitivity theory, the facial reduction procedure, and the singularity degree.
In Part \ref{part:applic}, we concentrate on illustrative examples and
applications, including matrix completion problems (semi-definite, low-rank, and Euclidean distance), relaxations of hard combinatorial problems (quadratic assignment and max-cut), and sum of squares relaxations of polynomial optimization problems.

\section{Reflections on Jonathan Borwein and \FR}
These are some reflections on Jonathan Borwein and his role in the
development of the facial reduction technique, by Henry Wolkowicz.
Jonathon Borwein passed away unexpectedly on Aug. 2, 2016. 
Jon was an extraordinary mathematician who made significant
contributions in an amazing number of very diverse areas.
Many details and personal memories by myself and many others including
family, friends, and colleagues, are presented at the memorial website 
\url{jonborwein.org}. This was a terrible loss to his family and all
his friends and colleagues, including myself.
The facial reduction process we use in this monograph originates in the work of Jon and
the second author (myself). This work took place from July of 1978 to July of 1979
when I went to Halifax to work with Jon at Dalhousie University in a
lectureship position.  The optimality conditions for the general abstract
convex program using the facially reduced problem is presented in the
two papers~\cite{bw1,bw2}. The facial reduction process is then derived
in~\cite{bw3}.

\part{Theory}
\label{part:theory}
\chapter{Convex geometry}
\label{sect:convgeom}

This section collects preliminaries of linear algebra and convex
geometry that will be routinely used in the rest of the manuscript. The
main focus is on convex duality, facial structure of convex cones, and
the primal-dual conic optimization pair. The two running examples of
linear and semi-definite programming illustrate the concepts. We have tried to include proofs of important theorems, when they are both elementary and enlightening. We have omitted arguments that are longer or that are less transparent, so as not to distract the reader from the narrative.

\section{Notation}
Throughout, we will fix a Euclidean space $\E$ with an inner product $\langle \cdot,\cdot
\rangle$ and the induced norm $\|x\|=\sqrt{\langle x,x\rangle}$. When referencing another Euclidean space (with its own inner product), we will use the letter $\F$. An open ball of radius $r>0$ around a point $x\in \E$ will be denoted by $B_{r}(x)$.
The two most  important examples of Euclidean spaces for us will be the space of $n$-vectors $\R^n$ with the 
dot product $\langle x,y\rangle=\sum_i x_iy_i$ and the space of $n\times n$ 
symmetric matrices ${\bf S}^n$ with the trace inner product 
$\langle X,Y\rangle=\tr(XY)$. Throughout, we let $e_i$ be the $i$'th coordinate vector of $\R^n$.
 Note that the trace inner product can be
equivalently written as $\tr(XY)=\sum_{i,j} X_{ij}Y_{ij}$. Thus the
trace inner product is itself the dot product between the two matrices stretched into vectors. A key property of the trace is invariance under permutations of the arguments: $\trace(AB)=\trace(BA)$ for any two matrices $A\in \R^{m\times n}$ and $B\in \R^{n\times m}$.

For any linear mapping $A\colon\E\to{\F}$, between
Euclidean spaces $\E$ and ${\F}$, the \textdef{adjoint mapping} $A^*\colon {\F}\to {\bf E}$ is the unique mapping satisfying
$$\langle Ax,y\rangle = \langle x, A^*y \rangle\quad \textrm{ for all } x \in \E\textrm{ and }y\in {\F}.$$ 
\index{trace inner product}
\index{adjoint}
Notice that the angle brackets on the left refer to the inner product in $\F$, while those on the right refer to the inner product in $\E$.

Let us look at  two important examples of adjoint maps.

\begin{example}[Adjoints of mappings between $\R^n$ and $\R^m$]
Consider a matrix $A\in\R^{m\times n}$ as a linear map from $\R^n\to\R^m$. Then the adjoint $A^*$ is simply the transpose $A^T$. To make the parallel with the next example, it is useful to make this description more explicit.
Suppose that the linear operator $A\colon\R^n\to\R^m$ is defined by 
\begin{equation}\label{eqn:row_space}
Ax=(\langle a_1,x\rangle,\ldots,\langle a_m,x\rangle),
\end{equation}
where $a_1,\ldots,a_m$ are some vectors in $\R^n$. When thinking of $A$ as a matrix, the vectors $a_i$ would be its rows, and the description \eqref{eqn:row_space} corresponds to a ``row-space'' view of matrix-vector multiplication $Ax$.
The adjoint $A^*\colon\R^m\to\R^n$ is simply the map 
\begin{equation}\label{eqn:adj_col}
A^*y=\sum_i y_i a_i.
\end{equation}
Again, when thinking of $A$ as a matrix with $a_i$ as its rows, the description \eqref{eqn:adj_col} corresponds to the ``column-space'' view of matrix-vector multiplication $A^Ty$.
\end{example}

\begin{example}[Adjoints of mappings between ${\bf S}^n$ and $\R^m$]
Consider a set of symmetric matrices $A_1,\ldots,A_m$ in ${\bf S}^n$, and 
define the linear map $A\colon {\bf S}^n\to\R^m$ by 
$$A(X)=(\langle A_1,X\rangle,\ldots,\langle A_m,X\rangle).$$
We note that any linear map $A\colon {\bf S}^n\to\R^m$ can be written in this way for some matrices $A_i\in \Sn$.
Notice the parallel to \eqref{eqn:row_space}. The adjoint $A^*\colon\R^m\to{\bf S}^n$ is given by  
$$A^*y=\sum_i y_i A_i.$$
Notice the parallel to \eqref{eqn:adj_col}. To verify that this indeed is the adjoint, simply observe the equation
$$
\langle X, \sum_i y_i A_i\rangle=\sum_i y_i\langle
A_i,X\rangle=\langle A(X),y \rangle,
$$
for any $X\in {\bf S}^n$ and $y\in\R^n$.
\end{example}

The interior, boundary, and closure of any set $C\subset\E$ will be denoted by $\inter C$, $\bd C$, and $\cl C$, respectively.
\index{$\inter C$} \index{$\bd C$} \index{$\cl C$}
A set $C$ is {\em convex}  if it contains the line segment joining any two points in $C$:
	\[
		x,y\in C,~ \alpha \in [0,1] \qquad\implies\qquad
		        \alpha x + (1-\alpha)y \in C.
	\]
The minimal affine space containing a convex set $C$ is called the {\em affine hull} of $C$, and is denoted by $\aff C$. We define the {\em relative interior} of $C$, written $\ri C$, to be the interior of $C$ relative to $\aff C$.
It is straightforward to show that a for a nonempty convex set $C$, the relative interior $\ri C$ is never empty.

A subset $\K$ of $\E$ is a \textdef{convex cone} if $\K$ is convex and
is positively homogeneous, meaning $\lambda K\subseteq K$ for all
$\lambda\geq 0$. Equivalently, $\K$ is a convex cone if, and only if, 
for any two points
$x$ and $y$ in $\K$ and any nonnegative constants $\alpha,\beta\geq
0$, the sum $\alpha x+\beta y$ lies in $\mathcal{K}$. We say that a convex cone $\K$ is {\em proper} if $\K$ is closed, has nonempty interior, and contains no lines.  The symbol $\KK^{\perp}$ refers to the orthogonal complement of $\aff \KK$.
Let us look at two most important examples of proper cones for this article.

\begin{example}[The nonnegative orthant $\R^n_+$]\label{exa:nonneg_ortho}
	\index{$\Rnp$}
The nonnegative orthant 
$$\R^n_+:=\{x\in \R^n: x_i\geq 0 \textrm{ for all } i=1,\ldots,n\}$$
is a proper convex cone in $\R^n$. The  interior of $\R^n_+$ is the set
$$\R^n_{++}:=\{x\in \R^n: x_i> 0 \textrm{ for all } i=1,\ldots,n\}$$
\end{example}

\begin{example}[The positive semi-definite cone ${\bf S}^n_+$]\label{exa:sdpp}
Consider the set of positive semi-definite matrices
$${\bf S}^n_+:=\{X\in {\bf S}^n: v^TXv\geq 0 \textrm{ for all } v\in \R^n\}.$$
It is immediate from the definition that ${\bf S}^n_+$ is a convex cone containing no lines.
 Let us quickly verify that ${\bf S}^n_+$ is proper. To see this,
 observe $$v^TXv=\trace(v^TXv)=\trace(Xvv^T)=\langle X,vv^T\rangle.$$
Thus ${\bf S}^n_+$ is closed because it is the intersection of the halfspaces $\{X\in {\bf S}^n: \langle X,vv^T \rangle\geq 0\}$ for all $v\in \R^n$, and arbitrary intersections of closed sets are closed. The interior of ${\bf S}^n_+$ is the set of positive definite matrices
$${\bf S}^n_{++}:=\{X\in {\bf S}^n: v^TXv> 0 \textrm{ for all } 0\neq v\in \R^n\}.$$
Let us quickly verify this description. Showing that ${\bf S}^n_{++}$ is open is straightforward; we leave the details to the reader. Conversely, consider a matrix $X\in {\bf S}^n_{+}\setminus {\bf S}^n_{++}$ and let $v$ be a nonzero vector satisfying  $v^TXv=0$. Then the matrix $X-tvv^T$  lies outside of ${\bf S}^n_{+}$ for every $t >0$, and therefore $X$ must lie on the boundary of  ${\bf S}^n_{+}$.
To summarize, we have shown that ${\bf S}^n_+$ is a proper convex cone.
\end{example}

\index{proper convex cone}
Given a convex cone $\K$ in $\E$, we introduce  two binary relations
$\succeq_{\K}$ and $\succ_{\K}$ on $\E$:
\begin{align*}
x\succeq_{\K} y \quad&\Longleftrightarrow \quad x-y \in \K,\\
x\succ_{\K} y \quad&\Longleftrightarrow \quad  x-y\in\inter \K.
\end{align*}
Assuming that $\K$ is proper makes the relation $\succeq_{\K}$ into a
\textdef{partial order}, meaning that for any three points $x,y,z\in \E$, the three conditions hold:
\begin{enumerate}
\item (reflexivity) $\quad x\succeq_{\K} x$
\item (antisymmetry) $\quad x\succeq_{\K} y~\textrm{ and }~ y \succeq_K x\quad \implies \quad x =y$
\item (transitivity) $\quad x\succeq_{\K} y~\textrm{ and }~ y \succeq_K z\quad \implies \quad x \succeq_{\K}z.$
\end{enumerate}

As is standard in the literature, we denote the partial order
$\succeq_{\R^n_+}$ on $\R^n$ by $\geq$ and the partial order
$\succeq_{{\bf S}^n_+}$ on ${\bf S}^n$ by $\succeq$. In particular, the
relation $x\geq y$ means $x_i\geq y_i$ for each coordinate $i$, while
the relation $X\succeq Y$ means that the matrix $X-Y$ is positive
semi-definite.

Central to conic geometry is duality.
The \textdef{dual cone}  
of $\K$ is the set
$$\K^*:=\{y\in\E: \langle x,y\rangle \geq 0 \textrm{ for all } x\in\K\}.$$   
\index{$K^*$, polar cone}
\index{$K^*$, dual cone}
The following lemma will be used extensively.
\begin{lemma}[Self-duality]
Both $\R^n_+$ and ${\bf S}^n_+$ are self-dual, meaning $(\R^n_+)^*=\R^n_+$ and $({\bf S}^n_+)^*={\bf S}^n_+$.
\end{lemma}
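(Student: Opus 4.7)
The plan is to prove each equality by showing both set inclusions, using the spectral decomposition for the semi-definite case and the coordinate vectors for the orthant case. In both settings, one inclusion expresses that certain nonnegativity is preserved under the inner product, while the reverse inclusion comes from testing against a well-chosen family of extreme elements of the cone.

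For $\R^n_+$, the inclusion $\R^n_+ \subseteq (\R^n_+)^*$ is immediate: for $x,y \in \R^n_+$ we have $\langle x,y\rangle = \sum_i x_i y_i \geq 0$. For the reverse inclusion, suppose $y \in (\R^n_+)^*$. Testing against the coordinate vectors $e_i \in \R^n_+$ gives $y_i = \langle e_i, y\rangle \geq 0$ for each $i$, hence $y \in \R^n_+$.

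For $\Sn_+$, the reverse inclusion $(\Sn_+)^* \subseteq \Sn_+$ is the easier direction: if $Y \in (\Sn_+)^*$, then for any $v \in \R^n$ the rank-one matrix $vv^T$ lies in $\Sn_+$ (as computed in Example \ref{exa:sdpp}), so $v^T Y v = \langle vv^T, Y\rangle \geq 0$; hence $Y \in \Sn_+$. The inclusion $\Sn_+ \subseteq (\Sn_+)^*$ is the main step. Given $X,Y \in \Sn_+$, I would use the spectral theorem to write $X = \sum_i \lambda_i v_i v_i^T$ with $\lambda_i \geq 0$, and then compute
\[
\langle X, Y\rangle = \sum_i \lambda_i \langle v_i v_i^T, Y\rangle = \sum_i \lambda_i\, v_i^T Y v_i \geq 0,
\]
since each term is a product of nonnegative numbers.

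The only step that requires anything beyond elementary manipulation is the use of the spectral decomposition (an external fact about symmetric matrices). Once that is invoked, the proof reduces to the observation that $\Sn_+$ is the conic hull of the rank-one matrices $vv^T$, so duality with $\Sn_+$ is determined by duality against this generating set — exactly the role the coordinate vectors play for $\R^n_+$.
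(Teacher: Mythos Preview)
Your proof is correct and matches the paper's approach essentially line for line: the paper also uses the spectral decomposition $X=\sum_i\lambda_i v_iv_i^T$ to show $\Sn_+\subseteq(\Sn_+)^*$ and tests against rank-one matrices $vv^T$ for the reverse inclusion, while leaving the $\R^n_+$ case to the reader. You have simply filled in that elementary case with the expected coordinate-vector argument.
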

\begin{proof}
The equality $(\R^n_+)^*=\R^n_+$ is elementary and we leave the proof to the reader. To see that ${\bf S}^n_+$ is self-dual, recall that a matrix $X\in {\bf S}^n$ is positive semi-definite if, and only if, all of its eigenvalues are nonnegative. Fix two matrices $X,Y\succeq 0$ and let $X=\sum_i\lambda_iv_iv_i^T$ be the eigenvalue decomposition of $X$. Then we deduce
\[
\langle X,Y\rangle =\trace XY=\sum_i\lambda_i \trace   (v_iv_i^T Y) =\sum_i\lambda_i (v_i^T Yv_i)\geq 0.
\]
Therefore the inclusion ${\bf S}^n_+\subseteq ({\bf S}^n_+)^*$ holds.
Conversely, for any $X\in ({\bf S}^n_+)^*$ and any $v\in \R^n$
the inequality,
$
0\leq \langle X,vv^T\rangle=v^TXv
$, holds.
The reverse inclusion ${\bf S}^n_+\supseteq ({\bf S}^n_+)^*$ follows, and the proof is complete.
\end{proof}

%
\index{$\K^{**}:=(\K^*)^*$, second polar cone}
Finally, we end this section with the following two useful results of convex geometry.
\begin{lemma}[Dual cone of a sum]\label{lem:dual_sum}
For any two closed convex cones $\KK_1$ and $\KK_2$, equalities hold: 
\begin{align*}
(\KK_1+\KK_2)^*=\KK_1^*\cap\KK_2^* \qquad \textrm{and}\qquad
	(\KK_1 \cap \KK_2)^*=\cl \left(\KK_1^* + \KK_2^*\right).
	\end{align*}
\end{lemma}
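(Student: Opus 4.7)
For the first identity, I would argue by double inclusion using only the definitions.

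For the inclusion $(\KK_1+\KK_2)^* \subseteq \KK_1^* \cap \KK_2^*$: given $y \in (\KK_1+\KK_2)^*$, since both cones contain the origin, each element $x_1 \in \KK_1$ can be written as $x_1 + 0 \in \KK_1 + \KK_2$, so $\langle y, x_1 \rangle \geq 0$; by symmetry the same holds for $\KK_2$. For the reverse inclusion, if $y$ lies in both $\KK_1^*$ and $\KK_2^*$, then for any $x_1 \in \KK_1$ and $x_2 \in \KK_2$ we have $\langle y, x_1 + x_2 \rangle = \langle y, x_1\rangle + \langle y, x_2\rangle \geq 0$.

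For the second identity, the plan is to bootstrap from the first identity using the bipolar theorem, which asserts that $\KK^{**} = \KK$ for any closed convex cone $\KK$ (and more generally $S^{**}$ equals the closed convex conic hull of $S$). Applying the first identity to the closed convex cones $\KK_1^*$ and $\KK_2^*$ in place of $\KK_1, \KK_2$ gives
\[
(\KK_1^* + \KK_2^*)^* \;=\; \KK_1^{**} \cap \KK_2^{**} \;=\; \KK_1 \cap \KK_2 .
\]
Taking duals once more yields $(\KK_1 \cap \KK_2)^* = (\KK_1^* + \KK_2^*)^{**}$. Since $\KK_1^* + \KK_2^*$ is already a convex cone, its double dual coincides with its closure, giving the claim.

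The main subtlety is the appeal to the bipolar theorem and the fact that $S^{**} = \cl S$ whenever $S$ is a convex cone; the closure is essential because $\KK_1^* + \KK_2^*$ need not be closed in general, even when $\KK_1, \KK_2$ are. If the bipolar theorem has not been stated explicitly earlier, one would instead need to verify directly that $(\KK_1 \cap \KK_2)^*$ contains $\KK_1^* + \KK_2^*$ (immediate from the first identity applied pointwise) and, being a closed set, also contains its closure, while the reverse inclusion would follow by a separation argument: a point $y \notin \cl(\KK_1^* + \KK_2^*)$ can be strictly separated from this closed convex cone, and the separating functional can be shown to lie in $\KK_1 \cap \KK_2$ while pairing negatively with $y$, contradicting $y \in (\KK_1 \cap \KK_2)^*$. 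This separation step is the part where one must be careful.
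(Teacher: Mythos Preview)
Your argument is correct and is the standard one. Note, however, that the paper does not actually supply a proof of this lemma: it is stated without proof, with the commentary pointing to Rockafellar's \emph{Convex Analysis} for such background results. So there is nothing in the paper to compare your approach against.

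One small remark on logical ordering: your main tool for the second identity is the bipolar theorem $\KK^{**}=\KK$ for closed convex cones, which in the paper is stated as the very next lemma (the ``Double dual'' lemma) \emph{after} the one you are proving. Your fallback sketch via direct separation avoids this circularity, and in fact the separation argument you outline is essentially how one proves the bipolar theorem anyway, so there is no real issue. If you want the cleanest presentation matching the paper's order, you could simply swap the two lemmas, or present the direct separation version.
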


\begin{lemma}[Double dual]
	A set $\K\subset \E$ is a closed convex cone 
	if, and only if, equality
	$\K=(\K^{*})^*$ holds.
\end{lemma}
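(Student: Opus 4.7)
The proof will naturally split into the two implications, with the ``only if'' direction carrying essentially all of the content.

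For the easy ``if'' direction, the plan is to observe that for any subset $S\subseteq\E$ (not necessarily convex), the dual $S^*$ is automatically a closed convex cone: by definition
\[
S^*=\bigcap_{x\in S}\{y\in\E:\langle x,y\rangle\geq 0\},
\]
which is an intersection of closed halfspaces through the origin, each of which is itself a closed convex cone. Since intersections preserve closedness, convexity, and the cone property, $S^*$ inherits all three. Applying this with $S=\K^*$ shows $(\K^*)^*$ is a closed convex cone, so if $\K=(\K^*)^*$ then $\K$ is too.

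The ``only if'' direction is the substantive one. Assume now that $\K$ is a closed convex cone. The containment $\K\subseteq(\K^*)^*$ is immediate from the definition of the dual cone: if $x\in\K$, then $\langle x,y\rangle\geq 0$ for every $y\in\K^*$, which says exactly that $x\in(\K^*)^*$. The hard part, and the one step I expect to be the main obstacle, is the reverse inclusion $(\K^*)^*\subseteq\K$, which cannot be done by a purely algebraic manipulation and must invoke separation.

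My plan for this step is a proof by contradiction using a separating hyperplane theorem for a closed convex set and a disjoint point. Suppose there is some $z\in(\K^*)^*\setminus\K$. Since $\K$ is closed and convex and $\{z\}$ is a compact convex set disjoint from $\K$, strict separation produces a nonzero $y\in\E$ and $\alpha\in\R$ with
\[
\langle y,z\rangle<\alpha\leq\langle y,x\rangle\quad\text{for all }x\in\K.
\]
Now I would exploit the cone structure of $\K$ in two ways. First, $0\in\K$ forces $\alpha\leq 0$, so in particular $\langle y,z\rangle<0$. Second, for any fixed $x\in\K$ and any $\lambda>0$, the point $\lambda x$ lies in $\K$, so $\langle y,\lambda x\rangle\geq\alpha$; letting $\lambda\to\infty$ rules out $\langle y,x\rangle<0$, and therefore $\langle y,x\rangle\geq 0$ for every $x\in\K$. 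This says precisely that $y\in\K^*$. But then $z\in(\K^*)^*$ gives $\langle y,z\rangle\geq 0$, contradicting $\langle y,z\rangle<0$. This contradiction completes the inclusion and therefore the lemma.
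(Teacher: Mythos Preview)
The paper does not actually prove this lemma: it is stated without proof in Section~\ref{sect:convgeom}, and the Commentary for that chapter simply notes that such results ``can all be found for instance in Rockafellar.'' So there is no in-paper argument to compare against.

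Your proof is correct and is the standard one. The ``if'' direction is handled cleanly by observing that any dual cone is an intersection of closed homogeneous halfspaces. For the ``only if'' direction your separation argument is exactly right: the two uses of the cone structure (plugging in $x=0$ to get $\alpha\leq 0$, and scaling $x\mapsto\lambda x$ to force $\langle y,x\rangle\geq 0$) are the key steps, and the contradiction with $z\in(\K^*)^*$ closes it. One small remark: the separation theorem the paper later records (Theorem~\ref{hype:sep_thm}) is the non-strict version for two arbitrary disjoint convex sets, whereas you invoke strict separation of a point from a closed convex set. That stronger form is of course standard and valid here since $\K$ is closed and $\{z\}$ is compact, but if you want to stay strictly within the tools the paper makes explicit, you could either cite strict separation directly or note that it follows from the paper's version after a short additional argument.
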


In particular, if $\K$ is a proper convex cone, then so is its dual $\K^*$, as the reader can verify.

\section{Facial geometry}
Central to this paper is the decomposition of a cone into faces.

\begin{definition}[Faces]
Let $\K$ be a convex cone.  A convex cone $F\subseteq \K$ is called a {\em face of} $\K$, denoted $F\unlhd
\K$, if the implication holds:
\index{$F\unlhd \K$, face of a cone}
\index{face of a cone, $F\unlhd \K$}
\[
	x,y\in \K,~ x+y \in F \quad\implies\quad x,y \in F.
\]
\end{definition}
Let $\mathcal{K}$ be a closed convex cone.
Vacuously, the empty set and $\K$ itself are faces. 
A face $F\unlhd \K$ is {\em proper} if it is neither empty nor all of
$\K$. One can readily verify  from the definition that the intersection of an arbitrary collection of faces of $\K$ is
itself a face of $\K$. A fundamental result of convex geometry shows that relative interiors of all faces of $\K$ form a partition of $\K$: every point of $\K$ lies in the relative interior of some face and relative interiors of any two distinct faces are disjoint. In particular, any proper face of 
$\K$ is disjoint from $\ri \K$.
\index{proper face}

\begin{definition}[Minimal face]
 The \emph{minimal face} of a convex cone $\mathcal{K}$ containing a set $S\subseteq \K$ is the intersection of all faces of $\K$ containing $S$, and is denoted by $\face(S, \K)$.
\end{definition}

A convenient alternate characterization of minimal faces is as follows. If $S\subseteq \KK$ is
a convex set, then $\face(S, \K)$ is the smallest face of $\K$ intersecting the relative interior of $S$. In particular, equality $\face(S, \K)=\face(x, \K)$ holds for any point $x\in \ri S$.
\index{minimal face, $\face(S, \K)$}
\index{$\face(S, \K)$, minimal face}

There is a special class of faces that admit ``dual'' descriptions.
Namely, for any vector $v\in \K^*$ one can readily verify that the set $F= v^{\perp}\cap\K$ is a face of $\K$. 
\begin{definition}[Exposed faces]
Any set of the form $F= v^{\perp}\cap\K$, for some vector $v\in \mathcal{K}^*$, is called an \textdef{exposed face} of $\K$. The vector $v$ is then called an \textdef{exposing vector} of $F$. 
\end{definition}

The classical hyperplane separation theorem shows that any point $x$ in the relative boundary of $\K$ lies in some proper exposed face.
Not all faces are exposed, however, as the following example shows.

\begin{example}[Nonexposed faces]
Consider the set $Q=\{(x,y)\in \R^2: y\geq \max(0,x)^2\}$, and let $\K$ be the closed convex cone generated by $Q\times \{1\}$. Then the ray $\{(0,0)\}\times \R_+$ is a face of $\K$ but it is not exposed.
\begin{figure}[h]
\begin{center}
\begin{tikzpicture}[scale=2]
\fill[fill=gray] (-1,1) -- plot [domain=-1:1] ({\x},{max(0,\x)*max(0,\x)}) -- (-1,1) -- cycle;
\draw plot[domain=-1:1] ({\x},{max(0,\x)*max(0,\x)});
\end{tikzpicture}
\end{center}
\caption{The set $Q$.}
\end{figure}
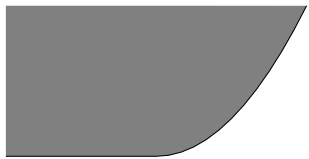
\end{example}

The following is a very useful property of exposed faces we will use.
\begin{proposition}[Exposing the intersection of exposed faces]\label{prop:inter_face_exp} {\hfill \\ }
	For any closed convex cone $\K$ and vectors $v_1, v_2\in \KK^*$, equality holds:
	$$(v_1^{\perp}\cap \K)\cap (v_2^{\perp}\cap \K)=(v_1+v_2)^{\perp}\cap \KK.$$
\end{proposition}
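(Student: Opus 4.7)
The plan is to prove the two set inclusions separately, noting that the nontrivial direction relies crucially on the assumption $v_1, v_2 \in \K^*$ together with the nonnegativity of the pairings $\langle v_i, x\rangle$ for $x \in \K$.

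First I would unwind the definitions: a point $x$ lies in the left-hand side exactly when $x \in \K$, $\langle v_1, x\rangle = 0$, and $\langle v_2, x\rangle = 0$, while it lies in the right-hand side exactly when $x \in \K$ and $\langle v_1 + v_2, x\rangle = 0$. The inclusion ``$\subseteq$'' is then immediate by linearity of the inner product: if both $\langle v_i, x\rangle$ vanish, so does their sum.

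For the reverse inclusion ``$\supseteq$'', suppose $x \in \K$ satisfies $\langle v_1 + v_2, x\rangle = 0$. Since $v_1, v_2 \in \K^*$ and $x \in \K$, both scalars $\langle v_1, x\rangle$ and $\langle v_2, x\rangle$ are nonnegative. Two nonnegative numbers summing to zero must individually be zero, so $\langle v_1, x\rangle = \langle v_2, x\rangle = 0$, placing $x$ in $(v_1^{\perp} \cap \K) \cap (v_2^{\perp} \cap \K)$.

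There is no real obstacle here; the entire content is the observation that nonnegative summands vanishing in total must each vanish, which is precisely why dual vectors combine so nicely under exposing. The key takeaway to emphasize is that this property fails for arbitrary $v_1, v_2$ (where cancellation between positive and negative pairings would be possible), underscoring why the statement requires $v_1, v_2 \in \K^*$.
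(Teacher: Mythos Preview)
Your proof is correct and follows essentially the same argument as the paper: the trivial inclusion by linearity, and the converse via the observation that $\langle v_i, x\rangle \geq 0$ for $x \in \K$ together with $\langle v_1+v_2, x\rangle = 0$ forces each summand to vanish. Your added remark on why $v_1, v_2 \in \K^*$ is essential is a nice touch that the paper does not make explicit.
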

\begin{proof}
	The inclusion $\subseteq$ is trivial. To see the converse, note for any $x\in(v_1+v_2)^{\perp}\cap\K$ we have $\langle v_1,x\rangle\geq 0$, $\langle v_2,x\rangle\geq 0$, while $\langle v_1+v_2,x\rangle=0$. We deduce $x\in v^{\perp}_1\cap v^{\perp}_2$ as claimed.
\end{proof}

In other words, if the faces $F_1\unlhd\K$ and $F_2\unlhd\K$ are exposed by $v_1$ and $v_2$, respectively, then the intersection $F_1\cap F_2$ is a face exposed by the sum $v_1+v_2$.

A convex cone is called \textdef{facially exposed} if all of its faces are exposed. The distinction between faces and exposed faces may appear mild at first sight; however, we will see that it is exactly this distinction that can cause difficulties for preprocessing techniques for general conic problems.

\begin{definition}[Conjugate face]{\hfill \\  }
With any face $F$ of a convex cone $\K$, we associate a face of the dual cone $\K^*$,
called the \textdef{conjugate face, $F^{\triangle}:=\K^*\cap F^{\perp}$}. 
\index{$F^{\triangle}:=\K^*\cap F^{\perp}$, conjugate face} 
\end{definition}
Equivalently, $F^{\triangle}$ is the face of $\K^*$ exposed by any point
$x\in \ri F$, that is $F^{\triangle}=\K^*\cap x^{\perp}$. Thus, in
particular, conjugate faces are always exposed. Not surprisingly, one can readily verify that
equality $(F^{\triangle})^{\triangle} =F$ holds if, and only if, the face $F\unlhd \K$ is exposed.

We illustrate the concepts with our two running examples, $\R^n_+$ and ${\bf S}^n_+$, keeping in mind the parallels between the two.

\begin{example}[Faces of $\R^n_+$]
For any index set $I\subseteq \{1,\ldots,n\}$, the set 
$$F_I=\{x\in \R^n_+: x_i=0 \textrm{ for all }i\in I\}$$
is a face of $\R^n_+$, and all faces of  $\R^n_+$ are of this form.
 In particular, observe that all faces of $\R^n_+$ are linearly
isomorphic to $\R^k_+$ for some positive integer $k$. In this sense, 
$\R^n_+$ is ``\textdef{self-replicating}''. 
 The relative interior of $F_I$ consists of all points in $F_I$ with $x_i>0$ for indices $i\notin I$.
The face $F_I$ is exposed by the vector $v\in \R^n_+$ with $v_i=1$ for all $i\in I$ and $v_i=0$ for all $i\notin I$. In particular, $\R^n_+$ is a facially exposed convex cone. The face conjugate to $F_I$ is $F_{I^c}$.
\end{example}

\begin{example}[Faces of ${\bf S}^n_+$]\label{exa:face_snp}
There are a number of different ways to think about (and represent) faces of the PSD cone ${\bf S}^n_+$. In particular, one can show that faces ${\bf S}^n_+$ are in correspondence with linear subspaces of $\R^n$. More precisely,
for any $r$-dimensional linear subspace $\mathcal{R}$ of $\R^n$, the set 
\begin{equation}\label{eqn:form}
F_{\mathcal{R}}:=\{X\in {\bf S}^n_+: \range X\subseteq \mathcal{R}\}
\end{equation}
is a face of ${\bf S}^n_+$.   Conversely, any face of ${\bf S}^n_+$ can be 
written in the form \eqref{eqn:form}, where $\RR$ is the range space of any matrix $X$ lying in the relative interior of the face. The relative interior of $F_{\mathcal{R}}$ consists of all matrices $X\in{\bf S}^n_+$ whose range space coincides with $\RR$.
Moreover, for any matrix $V\in\R^{n\times r}$ satisfying $\range V=\RR$, we have the equivalent description
$$F_{\mathcal{R}}=V{\bf S}^r_+V^T.$$ In particular, $F_{\mathcal{R}}$ is linearly isomorphic to
the $r$-dimensional positive semi-definite cone ${\bf S}^r_+$. The face conjugate to $F_{\mathcal{R}}$ is $F_{\mathcal R^{\perp}}$ and can be equivalently written as $$F_{\mathcal R^{\perp}}=U {\bf S}^{n-r}_+U^T,$$ 
for any matrix $U\in\R^{n\times (n-r)}$ satisfying $\range U=\RR^{\perp}$. 
Notice that then the matrix $UU^T$ lies in the relative interior of
$F^{\triangle}$ and therefore $UU^T$ exposes the face $F_{\mathcal{R}}$.
In particular,  ${\bf S}^n_+$ is \textdef{facially exposed} and also
\textdef{self-replicating}.

\end{example}

%
%

\section{Conic optimization problems}
\label{sect:conicopt}
Modern conic optimization draws fundamentally from ``duality'': every conic optimization
problem gives rise to a related conic optimization problem, called its dual. 
Consider the \textdef{primal-dual pair}:
\begin{equation}
	\label{eq:basicpd}
\begin{array}{cc}
\Pprob \,
\begin{array}{cc}
	 \inf &\langle c,x\rangle   \\
			  \textrm{ s.t.}  &Ax=b      \\
	    &x\succeq_{\K} 0		
\end{array}    &
\,
\qquad\qquad
 \Dprob \,
\begin{array}{rcc}
	 \sup &\langle b,y\rangle \\
		   \textrm{s.t.} & A^*y\preceq_{\K^*} c  \\
\end{array}
\end{array}
\end{equation}
Here, $\K$ is a closed convex cone in $\E$ and the mapping $A\colon\E\to\F$ is linear. Eliminating the trivial case that the system $Ax=b$ has no solution, we will always assume that $b$ lies in $\Range A$, and that $\K$ has nonempty interior. Two examples of conic
optimization are of main importance for us: linear programming  (\LPp)\
corresponds to $\K=\R^n_+$, $\F=\R^m$ and semi-definite programming (\SDPp)\ corresponds to $\K={\bf S}^n_+$,  $\F=\R^m$. The adjoint $A^*$ in both cases was computed in Examples~\ref{exa:nonneg_ortho} and \ref{exa:sdpp}.
We will also use the following notation for the {\em primal} and {\em dual feasible regions}:
$$\mathcal{F}_p:=\{x\succeq_{\KK} 0: Ax=b\}\qquad \textrm{and}\qquad \mathcal{F}_d:=\{y:  A^*y\preceq_{\K^*} c\}.$$
It is important to note that the dual can be put in a primal form by introducing {\em slack variables} $s\in\E$ leading to the equivalent formulation
\begin{equation}\label{eqn:dual_primal}
\begin{array}{rcc}
	\sup &\langle b,y\rangle \\
	\textrm{s.t.} & A^*y+s=c  \\
	& y\in {\bf F}, s\in \KK^*.
\end{array}
\end{equation}

To a reader unfamiliar with conic optimization, it may be unclear how the dual arises naturally from the primal. Let us see how it can be done. The dual problem \Dprobp\, can be discovered through ``Lagrangian duality'' in convex optimization.  Define the 
	\textdef{Lagrangian function}
	$$L(x,y):=\langle c,x\rangle +\langle y,b-Ax\rangle,$$ 
	and observe the equality
	\[ \max_{y}~ L(x,y)=\begin{cases} 
      \langle c,x\rangle & Ax=b \\
      +\infty & \textrm{otherwise} 
   \end{cases}.
\]
	Thus the primal problem \Pprobp\, is equivalent to
	\[
		\min_{x\succeq_\K0}~ \left(\max_y\, L(x,y)\right).
	\]
	Formally exchanging  min/max, yields exactly the dual problem \Dprobp\,
	\begin{align*}
		\max_y \left(\min_{x\succeq_\K0} L(x,y)\right)&=\max_y \,
		\left(\langle b,y\rangle +\min_{x\succeq_\K0}\, \langle
x,c-A^*y\rangle \right) \\
		&=\max\,
		\{\langle b,y\rangle: A^*y\preceq_{\K^*} c\}.
	\end{align*}
The primal-dual pair always satisfies the \textdef{weak duality inequality}: 
for any primal feasible $x$ and any dual feasible $y$, we have
\begin{equation}
	\label{eq:weakdual}
0\leq \langle  c-A^*y,x \rangle = \langle c,x\rangle -\langle b,y\rangle.
\end{equation}
Thus for any feasible point $y$  of the dual, its objective value $\langle b,y\rangle$ lower-bounds the optimal value of the primal.
The weak duality inequality \eqref{eq:weakdual}  leads to the following sufficient conditions for optimality.
\begin{proposition}[Complementary slackness]
	\label{prop:optsuff}{\hfill \\ }
Suppose that $(x,y)$ are a primal-dual feasible pair for
\Pprob,\,\Dprobp and suppose that \textdef{complementary
slackness} holds:
\[
0= \langle  c-A^*y,x \rangle.
\]
Then $x$ is a minimizer of \Pprobp\ and $y$ is a maximizer of \Dprob.
\end{proposition}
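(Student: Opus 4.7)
The plan is to leverage the weak duality inequality \eqref{eq:weakdual} directly; the complementary slackness hypothesis will pin the primal and dual objective values together, and weak duality then forces optimality on both sides simultaneously.

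First I would recall that \eqref{eq:weakdual} gives, for \emph{any} primal feasible $x'$ and dual feasible $y'$, the chain
\[
0\le \langle c-A^*y',x'\rangle = \langle c,x'\rangle - \langle b,y'\rangle,
\]
where the first equality uses $Ax'=b$. Applied to the given pair $(x,y)$, the complementary slackness hypothesis $\langle c-A^*y,x\rangle=0$ collapses this to $\langle c,x\rangle=\langle b,y\rangle$.

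Next, I would fix an arbitrary primal feasible $x'$ and apply \eqref{eq:weakdual} to the pair $(x',y)$ to obtain $\langle b,y\rangle \le \langle c,x'\rangle$. Combined with the previous equality this yields $\langle c,x\rangle = \langle b,y\rangle \le \langle c,x'\rangle$, so $x$ achieves the infimum in \Pprob. Symmetrically, for an arbitrary dual feasible $y'$, applying \eqref{eq:weakdual} to $(x,y')$ gives $\langle b,y'\rangle \le \langle c,x\rangle = \langle b,y\rangle$, so $y$ achieves the supremum in \Dprob.

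There is no real obstacle here: the entire argument is a two-line bookkeeping exercise built on \eqref{eq:weakdual}. The only thing to be careful about is remembering that \eqref{eq:weakdual} is already derived under the assumptions $Ax=b$ and $c-A^*y\in\K^*$, $x\in\K$, so all uses of it above are legitimate given primal and dual feasibility.
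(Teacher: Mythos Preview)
Your proof is correct and is exactly the argument the paper has in mind: the proposition is stated immediately after the weak duality inequality \eqref{eq:weakdual} with the remark that it ``leads to the following sufficient conditions for optimality,'' and no separate proof is given. Your two-line bookkeeping from \eqref{eq:weakdual} is precisely what is intended.
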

The sufficient conditions for optimality of Proposition \ref{prop:optsuff} are often summarized as the primal-dual system:
\begin{equation}
	\label{eq:optcondone}
	\begin{array}{rcll}
		A^*y+s-c&=& 0    & \text{(dual feasibility)}\cr
		Ax-b&=& 0    & \text{(primal feasibility)}\cr
		\langle s, x \rangle &=& 0    & \text{(complementary slackness)}\cr
		s\succeq_{\K^*}0,\, x\succeq_{\K} 0&& &
		\text{(nonegativity)}.
	\end{array}
\end{equation}

Derivations of algorithms generally require necessary optimality
conditions, i.e.,~failure of the necessary conditions at a current
approximation of the optimum leads to improvement steps.
When are sufficient conditions for optimality expressed in
Proposition~\ref{prop:optsuff} necessary? In other words, when can we be
sure that optimality of a primal solution $x$ can be certified by the
existence of some dual feasible point $y$, such that the pair satisfies the 
complementary slackness condition? Conditions guaranteeing existence of
Lagrange multipliers are called 
\textdef{constraint qualifications}. The most important condition of
this type is called \textdef{strict feasibility}, or {\em Slater's condition}, and is  the main topic of this article. 
\begin{definition}[Strict feasibility/Slater condition]
We say that \Pprobp is {\em strictly feasible} if there exists a point 
$x\succ_{\K} 0$ satisfying $Ax=b$. The dual  \Dprobp  is {\em strictly 
feasible} if there exists a point $y$ satisfying $A^*y\prec_{\K^*} c$. 
\end{definition}

The following result is the cornerstone of conic optimization.

\begin{theorem}[Strong duality]\label{thm:str_dual}
If the primal objective value is finite and the problem \Pprobp is
strictly feasible, then the primal and dual optimal values
are equal, and the dual \Dprobp admits an optimal solution. In addition, 
for any $x$ that is optimal for \Pprob, there exists a vector 
$y$ such that $(x,y)$ satisfies complementary slackness. 

 Similarly, if the dual objective value is finite and the dual \Dprobp satisfies strict feasibility, then the primal and dual optimal values are equal and the primal \Pprobp admits an optimal solution. 
In addition, for any $y$ that is optimal for \Dprob,
there exists a point $x$ such that $(x,y)$ satisfies complementary slackness. 
\end{theorem}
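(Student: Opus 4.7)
The idea is to obtain strong duality by exhibiting a dual optimal $y$ as a subgradient at the origin of the primal value (perturbation) function, with strict feasibility furnishing exactly the regularity needed to guarantee that a finite subgradient exists.

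Define the value function $\phi\colon\F\to\R\cup\{\pm\infty\}$ by
\[
\phi(u) := \inf\{\langle c, x\rangle : x\succeq_{\K} 0,\ Ax = b+u\},
\]
with the convention $\inf\emptyset=+\infty$. Standard convex analysis shows $\phi$ is convex (it is the infimal projection of a jointly convex function of $(x,u)$), and by hypothesis $\phi(0)$ equals the finite primal value $v_P$. Note $\dom\phi\subseteq \Range A$. Strict feasibility supplies $\bar x\in\inter\K$ with $A\bar x=b$, and for any $u=A\Delta x$ with $\|\Delta x\|$ small we have $\bar x+\Delta x\in\inter\K$ and $A(\bar x+\Delta x)=b+u$, giving $\phi(u)\leq\langle c,\bar x+\Delta x\rangle$. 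Thus $\phi$ is bounded above on a relative neighborhood of $0$ inside $\Range A$, placing $0$ in the relative interior of $\dom\phi$. The classical subdifferentiability theorem then produces $y\in\F$ satisfying
\[
\phi(u)\;\geq\; v_P+\langle y,u\rangle \qquad\text{for all } u\in\F.
\]

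Substituting $u=Ax-b$ for any $x\succeq_{\K}0$ and using $\langle c,x\rangle\geq\phi(Ax-b)$ yields
\[
\langle c-A^*y,\,x\rangle\;\geq\; v_P-\langle y,b\rangle\qquad\text{for all } x\in\K.
\]
Since $\K$ is a cone, the scaling $x\mapsto tx$ with $t\to\infty$ forces $c-A^*y\in\K^*$, so $y$ is dual feasible; setting $x=0$ gives $\langle y,b\rangle\geq v_P$. Combined with the weak duality inequality \eqref{eq:weakdual}, this yields $v_P=v_D=\langle y,b\rangle$, hence $y$ is dual optimal. For any primal optimum $x^*$, complementary slackness follows immediately from
\[
\langle c-A^*y,\,x^*\rangle \;=\; \langle c,x^*\rangle-\langle y,b\rangle \;=\; v_P-\langle y,b\rangle \;=\; 0.
\]

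The dual half of the theorem is obtained by applying the same argument to the primal-form reformulation \eqref{eqn:dual_primal} of $\Dprobp$. The principal technical hurdle is the subdifferentiability step: strict feasibility is precisely what guarantees $0\in\ri(\dom\phi)$, which is the standard sufficient condition for a proper convex function to possess a finite subgradient there. Without strict feasibility, $0$ can lie on the relative boundary of $\dom\phi$, where a subgradient may fail to exist; this is exactly the analytic mechanism underlying the failures of strong duality that motivate facial reduction in the remainder of the monograph.
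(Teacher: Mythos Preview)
The paper does not actually prove this theorem; it states it as a standard result and, in the commentary to Chapter~\ref{sect:convgeom}, refers the reader to Rockafellar~\cite{con:70}. So there is no ``paper's proof'' to compare against.

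Your argument is correct and is one of the standard routes to conic strong duality: introduce the perturbation (value) function, use strict feasibility to place $0$ in the relative interior of its effective domain, invoke the subdifferentiability of a proper convex function on $\ri(\dom\phi)$ to produce a subgradient $y$, and then read off dual feasibility and optimality of $y$ from the subgradient inequality via the cone-scaling trick. One small point worth making explicit for completeness: to apply the subdifferentiability theorem you need $\phi$ to be proper, i.e.\ $\phi>-\infty$ everywhere. This follows from your setup, since if $\phi(u_0)=-\infty$ for some $u_0$, then because $0\in\ri(\dom\phi)$ you can extend the segment $[u_0,0]$ slightly past $0$ to a point $u_1\in\dom\phi$, and convexity would force $\phi(0)=-\infty$, contradicting finiteness of $v_P$. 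Your remark that the dual half follows by applying the same argument to the primal reformulation~\eqref{eqn:dual_primal} is also fine. This value-function approach is entirely in the spirit of the monograph, which later (Chapter~\ref{chap:virtuesstrfeas}) revisits exactly this perturbation function when discussing stability.
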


Without a constraint qualification such as strict feasibility, 
the previous theorem is decisively false. The following examples show that 
without strict feasibility, the primal and dual optimal values may not even 
be equal, and even if they are equal, the optimal values may be unattained.
 \begin{example}[Infinite gap]
	 \label{ex:lackattain}
Consider the following primal \SDP in \eqref{eq:basicpd}:
\[
  0=v_p=\min\limits_{x\in {\bf S}^2}
\left\{
  2X_{12}\ :\ 
        X_{11}=0,\ X  \succeq 0
\right\}.
\]
The corresponding dual \SDP is the infeasible problem
\[
   -\infty=v_d=\max\limits_{y\in\R}\;
\left\{
  0y :\
  \begin{pmatrix}
y&0\cr 0&0
\end{pmatrix}
 \preceq 
\begin{pmatrix}
0&1\cr 1&0
\end{pmatrix}
\right\}.
\]
Both the primal and the dual fail strict feasibility in this example.
\end{example}
 \begin{example}[Positive duality gap]
	 \label{ex:posdualgap}
Consider the following primal \SDP in \eqref{eq:basicpd}:
\[
  v_p=\min\limits_{X\in {\bf S}^3}
\left\{
  X_{22}\ :\ 
        X_{33}=0,\ 
  X_{22}+ 2 X_{13}=1,\ 
 X  \succeq 0
\right\}.
\]
The constraint $X\succeq 0$ with $X_{33}=0$ implies equality $X_{13}=0$, and hence $X_{22}=1$.
Therefore, $v_p=1$ and the matrix $e_2e_2^T$ is optimal.

The corresponding dual \SDP is
\begin{equation}
	\label{eq:dualsdpdualgap}
   v_d=\max\limits_{y\in \R^2}\;
\left\{
  y_2\ :\
  \begin{pmatrix}
0&0&y_2\cr 0&y_2&0\cr y_2 &0&y_1
\end{pmatrix}
 \preceq 
\begin{pmatrix}
0&0&0\cr 0&1&0\cr 0&0&0
\end{pmatrix}
\right\}.
\end{equation}
This time the \SDP constraint implies $y_2=0$. We deduce that $(y_1,y_2)=(0,0)$ is optimal for the dual and hence $v_d=0<v_p=1$.
There is a finite duality gap between the primal and dual problems. The culprit again is that both the primal and the dual fail strict feasibility.

	 \begin{example}[Zero duality gap, but no attainment]
		 \label{ex:zerodualgap}
		 Consider the dual \SDP
\[
	\begin{array}{rclcc}
		v_d &=& \sup  & y \\
                     && \text{s.t.} &
		     y\begin{bmatrix}
			     0 & 1  \cr
			     1 & 0  \cr
		     \end{bmatrix} \preceq
		     \begin{bmatrix}
			     1 & 0  \cr
			     0 & 0  
		     \end{bmatrix}.
	\end{array}
\]
The only feasible point is $y=0$. Thus the optimal value is $v_d=0$ and
is \emph{attained}. The primal \SDP is
\[
	\begin{array}{rclcc}
		v_p &=& \inf  & X_{11} \\
		     && \text{s.t.} &  2X_{12}=1 \\
		     &&             &    X \in \Ss^2_+.
	\end{array}
\]
Notice $X_{11}> 0$ for all feasible $X$.
On the other hand, the sequence $X^k= \begin{bmatrix} 1/k & 1/2  \cr 1/2 & k
\end{bmatrix}$ is feasible and satisfies $X^k_{11}\to 0$. Thus there is
		 no duality gap, meaning $0=v_p=v_d$, but the primal
		 optimal value is {\em not attained}. The culprit is
		 that the dual \SDP  is not strictly feasible.

	 \end{example}

\begin{example}[Convergence to dual optimal value]
Numerical solutions of problems inevitably suffer from some
perturbations of the data, that is a perturbed problem is in fact solved.
	Moreover, often it is tempting to explicitly perturb a constraint in the problem, so that strict feasibility
	holds. This example shows that this latter strategy results in the dual of the problem
	being solved, as opposed to the problem under consideration.
	
	We consider the primal-dual \SDP pair in Example
	\ref{ex:posdualgap}. In particular, suppose first that we want to
	solve the dual problem. We canonically perturb the right-hand
	side of the dual in \eqref{eq:dualsdpdualgap}
	\[
	v_d(\epsilon):=\sup\limits_{y\in \R^2}\;
	\left\{
	y_2\ :\
	\begin{pmatrix}
	0&0&y_2\cr 0&y_2&0\cr y_2 &0&y_1
	\end{pmatrix}
	\preceq 
	\begin{pmatrix}
	0&0&0\cr 0&1&0\cr 0&0&0
	\end{pmatrix} + \epsilon P \right\}, 
	\]
	for some matrix $P\succ 0$ and real $\epsilon >0$. 
Strict feasibility now holds and we \emph{hope} that the optimal values of the
\textdef{perturbed problems} converge to that of the original one $v_d(0)$.
We can rewrite feasibility for the perturbed problem as
\begin{equation}
\label{eq:pertfeasdual}
	\begin{pmatrix}
	0&0&-y_2\cr 0&1-y_2&0\cr -y_2 &0&-y_1
	\end{pmatrix}+\epsilon P \succeq 0.
\end{equation}
A triple $(y_1,y_2,y_3)$ is strictly feasible
if, and only if, the leading principal minors $M_{11},M_{12},M_{123}$ 
of the left-hand side matrix in \eqref{eq:pertfeasdual}
are all positive. We have $M_{11}=\epsilon P_{11} > 0$.
The second leading principal minor as a function of $y_2$ is
\[
M_{12}(y_2)=
\epsilon\left( P_{11}(1-y_2)+\epsilon(P_{11}P_{22}-P_{12}^2)\right).
\] 
In particular, rearranging we have $M_{12}(y_2)>0$ whenever
\begin{equation}
\label{eq:minortwopos}
y_2 <  \frac 
{P_{11} + \epsilon(P_{11}P_{22}-P_{12}^2)}{P_{11}}.
\end{equation}
The last minor $M_{123}$ is positive for sufficiently negative 
$y_1$ by the Schur
complement. Consequently the perturbed problems satisfy
$$v_d(\epsilon)=1+
  \epsilon \frac{P_{11}P_{22}-P_{12}^2}{P_{11}}$$
and therefore
$$0=v_d<\lim_{\epsilon \to 0} v_d(\epsilon)=1=v_p.$$
That 
	is the primal optimal value is obtained in the limit rather than the dual optimal
	value that is sought.
	
	Let us look at an analogous perturbation to the primal problem. Let $A$ be the linear operator $A(X)=(X_{33},X_{22}+2X_{13})$ and set $b=(0,1)$.
	Consider the perturbed problems 
	$$v_p(\epsilon)=\min_{X\in {\bf S}^3} \{X_{22}: AX=b+\epsilon AP\}$$ for some fixed real $\epsilon>0$ and a matrix $P\succ 0$. Each such problem is strictly feasible, since the positive definite matrix $\widehat{X}+\epsilon P$ is feasible for any matrix $\widehat{X}$ that is feasible for the original primal problem.
	
	 In long form, the perturbed primal problems are
	\[
	v_p(\epsilon)=\min\limits_{X\succeq 0}
	\left\{
	X_{22}\ :\ 
	X_{33}=\epsilon P_{33},\ 
	X_{22}+ 2 X_{13}=1 + \epsilon(P_{22}+2P_{13})
	\right\}.
	\]
	Consider the matrix
	$$X=\begin{bmatrix}
	 X_{11} & 0 & 1/2+\epsilon (P_{22}+2P_{13})/2\\
	 0 & 0& 0\\
	 1/2+\epsilon (P_{22}+2P_{13})/2 & 0 & \epsilon P_{33}
	\end{bmatrix}.
	$$
	This matrix satisfies the linear system $AX=b$ by construction and is positive semi-definite for all sufficiently large $X_{11}$.  
We deduce	$v_p(\epsilon)=0=v_d$ for all $\epsilon >0$. Again, as $\epsilon$ tends to zero we obtain the dual optimal value
	rather than the sought after primal optimal value.
\end{example}


\end{example}

\section{Commentary}
\label{sect:relworkconvgeom}
We follow here well-established notation in convex optimization, as
illustrated for example in the monographs of Barvinok~\cite{barv_book},
Ben-Tal-Nemirovski~\cite{MR1857264}, Borwein-Lewis~\cite{BoLe:00}, and
Rockafellar~\cite{con:70}. The handbook of \SDP~\cite{SaVaWo:97} and
online lecture notes~\cite{laurentvallentin:16} are other excellent sources in
the context of semi-definite programming. These include 
discussion on the facial structure. 
The relevant results stated in the text can all be found for instance in Rockafellar~\cite{con:70}. The example \ref{ex:posdualgap} is a modification of the example in~\cite{Ram:93}.
In addition, note that the three examples \ref{ex:lackattain}, 
		 \ref{ex:posdualgap}, \ref{ex:zerodualgap}
have matrices with the special \textdef{perdiagonal} structure. The
universality of such special structure in ``ill-posed'' \SDPp s has recently been investigated in great length by Pataki~\cite{Gpat:11}.

\chapter{Virtues of strict feasibility}
\label{chap:virtuesstrfeas}

We have already seen in Theorem~\ref{thm:str_dual} that strict feasibility is essential to guarantee dual attainment and therefore for making the primal-dual optimality conditions \eqref{eq:optcondone} meaningful. In this section, we continue discussing the impact of strict feasibility on numerical stability. We begin with the theorems of the alternative, akin to the Farkas' Lemma in linear programming, which quantify the extent to which strict feasibility holds. We then show how such systems appear naturally in stability measures of the underlying problem.

\section{Theorem of the alternative}
The definition we have given of strict feasibility (Slater) is {\em qualitative} in nature, that is it involves no measurements. A convenient way to measure the extent to which strict feasibility holds (i.e. its strength) arises from dual characterizations of the property. We will see that strict feasibility corresponds to inconsistency of a certain {\em auxiliary system}. 
Measures of how close the auxiliary system is to being consistent yield estimates of ``stability'' of the problem. 

The aforementioned dual characterizations stem from the basic hyperplane separation theorem for convex sets.

\begin{theorem}[Hyperplane separation theorem]\label{hype:sep_thm}
Let $Q_1$ and $Q_2$ be two disjoint nonempty convex sets. Then there exists a nonzero vector $v$ and a real number $c$ satisfying
$$\langle v,x\rangle\leq c\leq \langle v,y\rangle\qquad \textrm{for all }x\in Q_1 \textrm{ and }y\in Q_2.$$
\end{theorem}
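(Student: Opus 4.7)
The plan is to reduce the two-set separation problem to separating the origin from a single convex set, and then handle that reduced problem via the nearest-point projection together with a limiting argument at the boundary.

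First, I would pass to the Minkowski difference $D := Q_1 - Q_2$, which is nonempty and convex as a linear image of $Q_1 \times Q_2$; the disjointness hypothesis $Q_1 \cap Q_2 = \emptyset$ is equivalent to $0 \notin D$. It then suffices to produce a nonzero $v \in \E$ with $\langle v, d\rangle \leq 0$ for every $d \in D$, because that inequality gives $\sup_{x \in Q_1}\langle v,x\rangle \leq \inf_{y \in Q_2}\langle v,y\rangle$, and any $c$ in that (possibly degenerate) interval satisfies the claimed inequalities. So the whole problem reduces to separating the origin from the convex set $D$.

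If $0 \notin \overline{D}$, the projection $p$ of $0$ onto the closed convex set $\overline{D}$ is nonzero, and the standard variational characterization of projection yields $\langle p, d - p\rangle \geq 0$ for every $d \in \overline{D}$, hence $\langle -p, d\rangle \leq -\|p\|^2 < 0$; so $v := -p$ works and in fact strictly separates. The harder case is $0 \in \overline{D}\setminus D$. Here one first notes that $0 \notin \mathrm{int}(\overline{D})$: if $D$ has nonempty interior then $\mathrm{int}(\overline{D}) = \mathrm{int}(D) \subseteq D$, contradicting $0 \notin D$; and if $D$ has empty interior then $\overline{D}$ lies in a proper affine subspace and any nonzero normal to that subspace does the job trivially. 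Assuming then that $0$ is a genuine boundary point of $\overline{D}$, choose a sequence $z_k \to 0$ with $z_k \notin \overline{D}$; applying the previous paragraph to each $z_k$ yields unit vectors $v_k$ with $\langle v_k, d - z_k\rangle \leq 0$ for all $d \in \overline{D}$. Finite-dimensionality of $\E$ lets us extract a convergent subsequence $v_k \to v$ with $\|v\| = 1$, and passing to the limit gives $\langle v, d\rangle \leq 0$ for every $d \in D$, as needed.

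The main obstacle is precisely this last step. The projection argument produces a separating hyperplane cleanly only when the origin sits strictly outside $\overline{D}$, so one has to recover the supporting hyperplane at a boundary point through a compactness-and-limit procedure, and finite-dimensionality of $\E$ is essential there to guarantee the limiting normal $v$ is nonzero. Everything else (convexity of $D$, the projection inequality, the relation $\mathrm{int}(\overline{D}) = \mathrm{int}(D)$ for convex sets with nonempty interior) is a routine ingredient from finite-dimensional convex analysis and fits on a few lines.
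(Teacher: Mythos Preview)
Your argument is correct and is the standard projection-plus-limit proof of the separation theorem in finite dimensions: reduce to separating $0$ from $D=Q_1-Q_2$, handle the case $0\notin\cl D$ by projecting, and recover the boundary case by approximating from outside and using compactness of the unit sphere. The case split on whether $D$ has nonempty interior, and the use of $\inter(\cl D)=\inter D$ for convex $D$, are handled cleanly.

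There is nothing to compare against in the paper: Theorem~\ref{hype:sep_thm} is stated without proof as a classical result (the surrounding commentary refers the reader to Rockafellar~\cite{con:70}). Your write-up would serve perfectly well as the omitted proof.
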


When one of the sets is a cone, the separation theorem takes the following ``homogeneous'' form.

\begin{theorem}[Homogeneous separation]\label{thm:sep_1}
Consider a nonempty closed convex set $Q$ and a closed convex cone $\K$ with nonempty interior. Then exactly one of the following alternatives holds.
\begin{enumerate}
\item\label{it:slat1} The set $Q$ intersects the interior of $\K$.
\item\label{it:slat2} There exists a vector $0\neq v\in \K^*$ satisfying $\langle v,x\rangle\leq 0$ for all $x\in Q$. 
\end{enumerate}
Moreover, for any vector $v$ satisfying the alternative \ref{it:slat2}, the region $Q\cap \K$ is contained in the proper face $v^{\perp}\cap \K$. 
\end{theorem}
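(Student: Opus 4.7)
The plan is to prove mutual exclusivity of the two alternatives first, then use the hyperplane separation theorem (Theorem~\ref{hype:sep_thm}) to establish existence of the separating vector when alternative~\ref{it:slat1} fails, and finally deduce the ``moreover'' conclusion as a short consequence.

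First I would verify that the two alternatives cannot hold simultaneously. Suppose $x \in Q \cap \inter \K$ and $0 \neq v \in \K^*$. The key observation is that $\langle v, x\rangle > 0$ strictly: since $x \in \inter \K$, there exists a ball $B_r(x) \subseteq \K$, so $x - t v \in \K$ for small $t > 0$, giving $\langle v, x - tv\rangle \geq 0$, i.e., $\langle v, x\rangle \geq t\|v\|^2 > 0$. This contradicts $\langle v, x\rangle \leq 0$.

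Next I would prove that if alternative~\ref{it:slat1} fails then alternative~\ref{it:slat2} holds. Since $Q$ and $\inter \K$ are nonempty, disjoint, and convex, Theorem~\ref{hype:sep_thm} produces a nonzero $v$ and $c \in \R$ with
\[
\langle v, y\rangle \leq c \leq \langle v, x\rangle \quad \text{for all } y \in Q,\ x \in \inter \K.
\]
The idea now is to exploit the cone structure of $\inter \K$: since $\lambda x \in \inter \K$ for all $\lambda > 0$ whenever $x \in \inter \K$, letting $\lambda \downarrow 0$ in $\langle v, \lambda x\rangle \geq c$ forces $c \leq 0$, while letting $\lambda \uparrow \infty$ forces $\langle v, x\rangle \geq 0$. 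By continuity and $\K = \cl(\inter \K)$ (which holds because $\K$ has nonempty interior), we conclude $v \in \K^*$. Combined with $\langle v, y\rangle \leq c \leq 0$ for $y \in Q$, this is exactly alternative~\ref{it:slat2}.

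For the final claim, fix $v$ satisfying~\ref{it:slat2} and take any $x \in Q \cap \K$. From $x \in Q$ we get $\langle v, x\rangle \leq 0$, while $v \in \K^*$ and $x \in \K$ give $\langle v, x\rangle \geq 0$. Hence $x \in v^\perp \cap \K$. The set $v^\perp \cap \K$ is an exposed face by definition; it is nonempty (contains $0$) and proper because the strict positivity observation from the first paragraph shows $v^\perp \cap \K$ omits every interior point of $\K$, so it cannot equal $\K$.

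The only step requiring real care is the passage from the separating inequality on $\inter \K$ to the conclusion $v \in \K^*$; the main obstacle is handling the cone-scaling argument cleanly and invoking $\K = \cl(\inter \K)$ (valid because $\inter \K \neq \emptyset$), but no deeper ingredient is needed beyond Theorem~\ref{hype:sep_thm} and the strict pairing fact for interior points of the cone with nonzero dual vectors.
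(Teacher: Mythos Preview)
Your proof is correct and follows essentially the same strategy as the paper: apply the hyperplane separation theorem and exploit the cone structure to force the separating constant down to zero, then read off $v\in\K^*$. The only difference is in where the homogenization happens: the paper replaces $Q$ by $\cone Q$ before separating (so that $0$ lies on both sides and $c=0$ immediately), whereas you separate $Q$ from $\inter\K$ directly and then scale inside $\inter\K$ to obtain $c\le 0$ and $v\in\K^*$. Your route is slightly more self-contained, since it avoids the small verification that $Q\cap\inter\K=\emptyset$ implies $\cone(Q)\cap\inter\K=\emptyset$; otherwise the arguments are interchangeable.
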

\begin{proof}
Suppose that $Q$ does not intersect the interior of $\K$. Then the convex cone generated by $Q$, denoted by $\cone Q$, does not intersect $\inter\K$ either.
The hyperplane separation theorem (Theorem~\ref{hype:sep_thm}) shows that there is a nonzero vector $v$ and a real number $c$ satisfying
$$\langle v,x\rangle\leq c\leq \langle v,y\rangle \quad \textrm{ for all }x\in \cone Q\textrm{ and } y\in  \inter \K.$$
Setting $x=y=0$, we deduce $c= 0$. Hence $v$ lies in $\K^*$ and \ref{it:slat2} holds.

Conversely, suppose that \ref{it:slat2} holds. Then the inequalities
$0\leq \langle v,x\rangle \leq 0$  hold for all $x\in Q\cap \KK$. Thus we deduce that the intersection $Q\cap \KK$ lies in the proper face $v^{\perp}\cap \KK$. Hence the alternative  \ref{it:slat1} can not hold.
\end{proof}

Let us now specialize the previous theorem to the primal problem \Pprob, by letting $Q$ be the affine space $\{x: Ax=b\}$. Indeed, this is the main result of this subsection and it will be used extensively in what follows.

	\begin{theorem}[Theorem of the alternative for the primal]
	 \label{thm:primalthmalt}
Suppose that $\KK$ is a closed convex cone with nonempty interior. Then 
	exactly one of the following alternatives holds.
\begin{enumerate}
\item 
	\label{item:altaxibxgt}
	The primal \Pprobp is strictly feasible.
\item 
	\label{item:altya}
	The auxiliary system is consistent:
\begin{equation}
	\label{eq:auxsysprim}
	0\neq A^*y \succeq_{\KK^*} 0 \qquad\textrm{and}\qquad \langle b,y\rangle\leq 0.
\end{equation}
\end{enumerate}
Suppose that the primal \Pprobp is feasible. Then the auxiliary system \eqref{eq:auxsysprim} is equivalent to the system
\begin{equation}
\label{eq:auxsysprim_feas}
0\neq A^*y \succeq_{\KK^*} 0 \qquad\textrm{and}\qquad \langle b,y\rangle= 0.
\end{equation}
Moreover, then  any vector $v$ satisfying either of the equivalent systems, \eqref{eq:auxsysprim} and \eqref{eq:auxsysprim_feas}, yields a proper face $(A^*y)^{\perp}\cap \KK$ containing the primal  feasible region
 $\FF_p$.
	\end{theorem}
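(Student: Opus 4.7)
The plan is to specialize the homogeneous separation theorem (Theorem~\ref{thm:sep_1}) to the affine set $Q := \{x : Ax = b\}$, which is nonempty by the standing assumption $b \in \Range A$, and to the cone $\KK$, which has nonempty interior by hypothesis. Alternative~\ref{it:slat1} of Theorem~\ref{thm:sep_1} then reads exactly as strict feasibility of \Pprob, so mutual exclusion of the two alternatives in the present theorem is inherited directly from Theorem~\ref{thm:sep_1}. The bulk of the work is translating Alternative~\ref{it:slat2} into the auxiliary system \eqref{eq:auxsysprim}.

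Alternative~\ref{it:slat2} produces a nonzero $v \in \KK^*$ with $\langle v, x\rangle \le 0$ for all $x\in Q$. Fix any $x_0 \in Q$, so that $Q = x_0 + \ker A$. Since $z\mapsto \langle v, x_0 + z\rangle$ is bounded above on the whole subspace $\ker A$, it must vanish on $\ker A$, giving $v \in (\ker A)^\perp = \Range A^*$. Writing $v = A^*y$, we obtain $0 \neq A^*y \succeq_{\KK^*} 0$ and $\langle b, y\rangle = \langle A^*y, x_0\rangle \le 0$, which is \eqref{eq:auxsysprim}. Conversely, any $y$ solving \eqref{eq:auxsysprim} supplies the separator $v = A^*y$: nonzero, in $\KK^*$, and satisfying $\langle v, x\rangle = \langle y, Ax\rangle = \langle y, b\rangle \le 0$ for every $x\in Q$, giving Alternative~\ref{it:slat2}.

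For the second claim, suppose $\FF_p\neq\emptyset$ and pick $\bar x \in \FF_p$. If $y$ satisfies \eqref{eq:auxsysprim}, then $A^*y \in \KK^*$ and $\bar x \in \KK$ force $\langle y, b\rangle = \langle A^*y, \bar x\rangle \ge 0$, which combined with $\langle y,b\rangle \le 0$ yields equality; thus \eqref{eq:auxsysprim} collapses to \eqref{eq:auxsysprim_feas}, and the reverse inclusion is immediate. For the final face containment, given any $y$ satisfying \eqref{eq:auxsysprim_feas} and any $x\in \FF_p$, compute $\langle A^*y, x\rangle = \langle y, Ax\rangle = \langle y, b\rangle = 0$, so $x \in (A^*y)^\perp \cap \KK$. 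Properness of this face follows because $\inter \KK \neq \emptyset$ together with $0\neq A^*y \in \KK^*$ implies $\langle A^*y, \cdot\rangle$ is strictly positive on $\inter\KK$, so the face cannot be all of $\KK$.

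The only nonroutine step is passing from the abstract separator $v$ to one of the form $A^*y$; everything else is bookkeeping. That step rests on the elementary observation that a linear functional bounded above on an affine set must be constant on the associated linear subspace, hence annihilates $\ker A$ and so lies in $\Range A^*$.
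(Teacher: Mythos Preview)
Your proof is correct and follows essentially the same approach as the paper: specialize Theorem~\ref{thm:sep_1} to $Q=\{x:Ax=b\}$, translate alternative~\ref{it:slat2} into the auxiliary system by showing the separator $v$ lies in $\Range A^*$, and handle the feasible case by the squeeze $0\le\langle A^*y,\bar x\rangle=\langle b,y\rangle\le 0$. You spell out a couple of points the paper leaves implicit (the boundedness argument forcing $v\perp\ker A$, and the properness of the face), but the structure is identical.
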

	\begin{proof}
Set $Q:=\{x: Ax=b\}$. 
Clearly strict feasibility of \Pprobp is equivalent to alternative \ref{it:slat1} of Theorem~\ref{thm:sep_1}. Thus it suffices to show that the auxiliary system \eqref{eq:auxsysprim} is equivalent to the alternative  \ref{it:slat2} of Theorem~\ref{thm:sep_1}. To this end, note that  for any vector $y$ satisfying  \eqref{eq:auxsysprim}, the vector $v:=A^*y$  satisfies the alternative  \ref{it:slat2} of Theorem~\ref{thm:sep_1}. Conversely, consider a vector $0\neq v\in \KK^*$ satisfying $\langle v,x\rangle\leq 0$ for all $x\in Q$. Fix a point $\hat x\in Q$ and observe the equality $Q=\hat x+\Null(A)$. An easy argument then shows that $v$ is orthogonal to $\Null(A)$, and therefore can be written as $v=A^*y$ for some vector $y$. We deduce $\langle b,y\rangle=\langle A\hat x,y \rangle=\langle \hat x,v\rangle\leq 0$, and therefore $y$ satisfies \eqref{eq:auxsysprim}.

Next, assume that \Pprobp is feasible.
Suppose $y$ satisfies \eqref{eq:auxsysprim}. Then for any feasible point $x$ of \Pprob, we deduce $0\leq \langle x,A^*y\rangle=\langle b,y\rangle\leq 0$.  Thus $y$ satisfies the system \eqref{eq:auxsysprim_feas}.  It follows that the two systems  \eqref{eq:auxsysprim} and \eqref{eq:auxsysprim_feas} are equivalent and that the proper face $(A^*y)^{\perp}\cap \KK$ contains the primal feasible region $\FF_p$, as claimed.
	\end{proof}

Suppose that $\mathcal{F}_p$ is nonempty. Then if strict feasibility
fails, there always exists a ``witness'' (or ``short certificate'') $y$
satisfying the auxiliary system \eqref{eq:auxsysprim_feas}. Indeed,
given such a vector $y$, one immediately deduces, as in the proof, that
$\mathcal{F}_p$ is contained in the proper face $(A^*y)^{\perp}\cap \KK$
of $\KK$. Such certificates will in a later section be used
constructively to regularize the conic problem through the \FR procedure.

The analogue of Theorem~\ref{thm:primalthmalt} for the dual \Dprobp quickly follows.

\begin{theorem}[Theorem of the alternative for the dual]
	\label{thm:dualthmalt}
	Suppose that $\KK^*$ has nonempty interior. Then
	exactly one of the following alternatives holds.
	\begin{enumerate}
		\item 
		\label{item:altaxibxgt_dual}
		The dual \Dprobp is strictly feasible.
		\item 
		\label{item:altya_dual}
		The auxiliary system is consistent:
		\begin{equation}
		\label{eq:auxsysprim_dual}
		0\neq x \succeq_{\KK} 0,\quad Ax=0,  \quad\textrm{and}\quad \langle c,x\rangle\leq 0.
		\end{equation}
	\end{enumerate}
	Suppose that the dual \Dprobp is feasible. Then the auxiliary system \eqref{eq:auxsysprim_dual} is equivalent to the system
	\begin{equation}
	\label{eq:auxsysprim_feas_dual}
	0\neq x \succeq_{\KK} 0,\quad Ax=0,  \quad\textrm{and}\quad \langle c,x\rangle= 0.
	\end{equation}
	Moreover, for any vector $x$ satisfying either of the equivalent systems, \eqref{eq:auxsysprim_dual} and \eqref{eq:auxsysprim_feas_dual}, yields a proper face $x^{\perp}\cap \KK^*$ containing the feasible slacks $\{c-A^*y: y\in\FF_d\}$.	
\end{theorem}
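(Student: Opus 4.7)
The plan is to mirror the proof of Theorem \ref{thm:primalthmalt}, applying the homogeneous separation theorem (Theorem \ref{thm:sep_1}) with the roles of the primal cone and its dual swapped. Specifically, I would take the closed convex cone in Theorem \ref{thm:sep_1} to be $\KK^*$ (which has nonempty interior by hypothesis) and take the convex set $Q$ to be the affine image
\[
Q := \{c - A^*y : y \in \F\}.
\]
Then alternative \ref{it:slat1} of Theorem \ref{thm:sep_1} says exactly that $Q \cap \inter \KK^* \neq \emptyset$, which is precisely strict feasibility of \Dprob.

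Next I would translate alternative \ref{it:slat2} into the system \eqref{eq:auxsysprim_dual}. The separation alternative yields a nonzero $x \in (\KK^*)^* = \KK$ (using the double dual lemma and closedness of $\KK$) such that $\langle x, c - A^*y\rangle \leq 0$ for every $y \in \F$. Rewriting, $\langle c,x\rangle - \langle Ax, y\rangle \leq 0$ for all $y$. Because the second term is linear in $y$ and unbounded unless $Ax = 0$, this forces $Ax = 0$, and the remaining inequality becomes $\langle c,x\rangle \leq 0$. Conversely, any such $x$ certifies alternative \ref{it:slat2}. This establishes the dichotomy.

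For the equivalence of \eqref{eq:auxsysprim_dual} and \eqref{eq:auxsysprim_feas_dual} under dual feasibility, I would pick any $\bar y \in \FF_d$, so that $c - A^*\bar y \succeq_{\KK^*} 0$. Pairing with $x \succeq_\KK 0$ satisfying \eqref{eq:auxsysprim_dual} gives
\[
0 \leq \langle x, c - A^*\bar y\rangle = \langle c,x\rangle - \langle Ax, \bar y\rangle = \langle c,x\rangle,
\]
which combined with $\langle c,x\rangle \leq 0$ yields $\langle c,x\rangle = 0$.

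Finally, for the face containment claim: for any $y \in \FF_d$ the slack $s = c - A^*y$ lies in $\KK^*$, and the same calculation as above gives $\langle x, s\rangle = \langle c,x\rangle - \langle Ax, y\rangle = 0$. Hence every feasible slack lies in $x^\perp \cap \KK^*$, which is a face of $\KK^*$ exposed by $x$; it is proper because $x \neq 0$ lies in $\KK = (\KK^*)^*$ so no interior point of $\KK^*$ can be orthogonal to $x$. I expect no serious obstacle — the only small care-point is handling the ``for all $y \in \F$'' quantifier to extract $Ax = 0$, but this is just the observation that a linear functional bounded above on all of $\F$ must vanish identically.
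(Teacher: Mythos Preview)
Your argument is correct, but it differs from the paper's one-line proof. The paper simply applies Theorem~\ref{thm:primalthmalt} to the slack-variable reformulation~\eqref{eqn:dual_primal} of the dual, thereby reusing the primal result verbatim. You instead rerun the separation argument from scratch with $Q=\{c-A^*y:y\in\F\}$ and the cone $\KK^*$. What you gain is a self-contained derivation that makes the appearance of the condition $Ax=0$ transparent (it drops out of the ``bounded linear functional on all of $\F$'' step), and you handle the proper-face claim explicitly using that $\inter\KK^*\neq\emptyset$. What the paper's route buys is brevity: once the dual is recognized as a primal problem over the cone $\F\times\KK^*$, everything---the dichotomy, the equivalence under feasibility, and the face containment---is inherited without further work. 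Either approach is fine here.
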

\begin{proof}
	Apply Theorem~\ref{thm:primalthmalt} to the equivalent formulation \eqref{eqn:dual_primal} of the dual \Dprob.
\end{proof}

\section{Stability of the solution}
In this section, we explain the impact of strict feasibility on stability of the conic optimization problem through quantities naturally arising from the auxiliary system \eqref{eq:auxsysprim}. For simplicity we focus on the primal problem \Pprob, though an entirely analogous development is possible for the dual, for example by introducing slack variables. 

We begin with a basic question: at what rate does the optimal value of the primal problem \Pprobp change relative to small perturbations of the right-hand-side of the linear equality constraints? To formalize this question, define the {\em value function} 
\begin{align*}
v(\Delta):=\quad\inf~~ &\langle c,x\rangle   \\
\textrm{ s.t.}~~  &Ax=b+\Delta      \\
&x\succeq_{\K} 0.		
\end{align*}
The value function $v\colon\F\to [-\infty,+\infty]$  
thus defined is convex, meaning that its epigraph
$$\epi v:=\{(x,r)\in \F\times\R: v(x)\leq r\}$$
is a convex set. Seeking to understand stability of the primal  \Pprobp under perturbation of the right-hand-side $b$, it is natural to examine the variational behavior of the value function. 
There is an immediate obstruction, however. If $A$ is not surjective, then there are arbitrarily small perturbations of $b$ making $v$ take on infinite values. As a result, in conjunction with strict feasibility, we will often make the mild assumption that $A$ is surjective. These two properties taken together are refereed to as the  {\em Mangasarian-Fromovitz Constraint Qualification (MFCQ)}.

\begin{definition}[Mangasarian-Fromovitz \CQp]
We say that the \textdef{Mangasarian-Fromovitz Constraint Qualification
(\MFCQp)} holds for \Pprobp if $A$ is surjective and \Pprobp is strictly feasible.
\end{definition}

The following result describes directional derivatives of the value function.

\begin{theorem}[Directional derivative of the value function]
Suppose the primal problem \Pprobp is feasible and its optimal value is finite. Let
$\rm{Sol}\Dprob$ be the set of optimal solutions of the dual \Dprob.
Then $\rm{Sol}\Dprob$ is nonempty and bounded if, and only if, \MFCQ holds.
Moreover, under \MFCQp, the directional derivative $v'(0;w)$ of the value function at $\Delta=0$ in direction $w$ admits the representation
$$v'(0;w)=\sup \{\langle w,y\rangle: y\in \rm{Sol}\Dprob\}.$$
\end{theorem}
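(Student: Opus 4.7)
The plan is to split the statement into two parts: (i) the equivalence of MFCQ with $\text{Sol}\Dprob$ being nonempty and bounded, and (ii) the support function representation of $v'(0;\cdot)$. For (i), strong duality (Theorem~\ref{thm:str_dual}) immediately yields that $\text{Sol}\Dprob$ is nonempty whenever MFCQ holds and the primal value is finite. For boundedness under MFCQ, I would argue by contradiction: suppose $0 \neq d$ is a recession direction of $\text{Sol}\Dprob$. Then for any $y^* \in \text{Sol}\Dprob$ the ray $y^* + td$ remains dual feasible with constant objective, forcing $-A^*d \in \K^*$ and $\langle b, d\rangle = 0$. Surjectivity of $A$ makes $A^*$ injective, so $A^*d \neq 0$; then $y := -d$ satisfies the auxiliary system \eqref{eq:auxsysprim_feas}, contradicting strict feasibility via Theorem~\ref{thm:primalthmalt}. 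For the converse, if $A$ is not surjective, any nonzero $y_0 \in \ker A^*$ must satisfy $\langle b, y_0\rangle = 0$ (else moving in the direction $\pm y_0$ drives the dual objective to $+\infty$, contradicting $\text{Sol}\Dprob \neq \emptyset$), and hence $y^* + t y_0$ traces a line inside $\text{Sol}\Dprob$; if strict feasibility fails while the primal is feasible, the witness $y$ from \eqref{eq:auxsysprim_feas} produces a ray $y^* - t y$ inside $\text{Sol}\Dprob$. Either way $\text{Sol}\Dprob$ is unbounded.

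For (ii), the value function $v$ is convex by the standard perturbation calculus. Under MFCQ, fixing $x_0 \succ_\K 0$ with $Ax_0 = b$ and using surjectivity of $A$ to solve $A\bar x = \Delta$ with $\bar x$ as small as desired, the sum $x_0 + \bar x$ stays strictly feasible for the $\Delta$-perturbed problem; hence $0 \in \inter(\dom v)$, and a classical convex-analytic fact gives
\[
v'(0;w) = \sup\{\langle w, y\rangle : y \in \partial v(0)\}.
\]
It therefore suffices to verify $\partial v(0) = \text{Sol}\Dprob$.

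For $\text{Sol}\Dprob \subseteq \partial v(0)$: given $y \in \text{Sol}\Dprob$ and any $x \succeq_\K 0$ with $Ax = b + \Delta$, write
\[
\langle c, x\rangle = \langle c - A^*y, x\rangle + \langle y, b\rangle + \langle y, \Delta\rangle \geq v(0) + \langle y, \Delta\rangle,
\]
using dual feasibility of $y$ for the nonnegativity of the first term and strong duality for $\langle y, b\rangle = v(0)$; taking the infimum over $x$ yields the subgradient inequality. For the reverse inclusion, fix $y \in \partial v(0)$. Each $x \succeq_\K 0$ is feasible for the perturbation $\Delta := Ax - b$, so the subgradient inequality at $\Delta$ gives $\langle c, x\rangle \geq v(0) + \langle y, Ax - b\rangle$, that is
\[
\langle c - A^*y, x\rangle \geq v(0) - \langle y, b\rangle \qquad \text{for all } x \in \K.
\]
Positive homogeneity of $\K$ forces $c - A^*y \in \K^*$ (otherwise rescaling $x$ drives the left side to $-\infty$), so $y$ is dual feasible; setting $x = 0$ then yields $\langle y, b\rangle \geq v(0)$, and weak duality provides the reverse, whence $y \in \text{Sol}\Dprob$. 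The main obstacle will be this final inclusion, where extracting both dual feasibility and objective matching from the subgradient inequality alone relies essentially on the cone structure of $\K$.
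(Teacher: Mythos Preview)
The paper states this theorem without proof, treating it as a classical result; the Commentary at the end of Chapter~\ref{chap:virtuesstrfeas} points to Gauvin--Tolle~\cite{MR0441352} and related references for the equivalence of \MFCQ with compactness of the multiplier set. So there is nothing to compare against directly, and your argument has to stand on its own.

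It does. Your proof is the standard convex-analytic one and is correct in all essentials. In part~(i), the recession-direction argument combined with Theorem~\ref{thm:primalthmalt} is exactly the right mechanism, and your converse correctly separates the two ways \MFCQ can fail. One small point worth making explicit: when $A$ is not surjective and you pick $y_0\in\ker A^*$, the reason $\langle b,y_0\rangle$ must vanish is that the dual optimal value is finite (it equals the finite primal value by weak duality once $\mathrm{Sol}\Dprob\neq\emptyset$), not merely that $\mathrm{Sol}\Dprob$ is nonempty---you say this but the logical order could be tightened. In part~(ii), your verification that $0\in\inter(\dom v)$ and the identification $\partial v(0)=\mathrm{Sol}\Dprob$ are both clean; the ``main obstacle'' you flag is not actually an obstacle, since the cone structure of $\K$ (specifically $0\in\K$ and positive homogeneity) is precisely what lets you extract both $c-A^*y\in\K^*$ and $\langle b,y\rangle\geq v(0)$ from the single inequality $\langle c-A^*y,x\rangle\geq v(0)-\langle b,y\rangle$ for all $x\in\K$.
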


In particular, in the notation of the above theorem, the local Lipschitz constant of $v$ at the origin,
$$ \limsup_{\Delta_1,\Delta_2 \to 0}~ \frac{|v(\Delta_1)-v(\Delta_2)|}{\|\Delta_1-\Delta_2\|},$$
coincides with the norm of the maximal-norm dual optimal solution, and
is finite if, and only if, \MFCQ holds. Is there then an upper-bound on
the latter that we can easily write down? Clearly, such a quantity must
measure the strength of \MFCQp, and is therefore intimately related to the auxiliary system \eqref{eq:auxsysprim}. To this end, let us define the {\em condition number}
$$\con \Pprob:=\min_{y: \|y\|=1} \max\{\dist_{\K^*}(A^*y), \langle b,y\rangle \}.$$ 
This number is a quantitative measure of MFCQ, and will appear in latter sections as well. Some thought shows that it is in essence measuring how close the auxiliary system \eqref{eq:auxsysprim} is to being consistent.

\begin{lemma}[Condition number and \MFCQp]
The condition number \con \Pprob\, is nonzero if, and only if, \MFCQ holds.	
\end{lemma}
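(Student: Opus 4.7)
The plan is to unpack $\con \Pprob = 0$ and match it up with the auxiliary system in Theorem~\ref{thm:primalthmalt}, using the standard linear algebra identity $\nul(A^*) = \range(A)^\perp$ to handle the surjectivity half of \MFCQp. Since $\dist_{\K^*}(\cdot) \ge 0$ and $y \mapsto \max\{\dist_{\K^*}(A^*y), \langle b,y\rangle\}$ is continuous on the unit sphere, the infimum in the definition of $\con \Pprob$ is attained. The condition $\con \Pprob = 0$ therefore amounts to the existence of some unit vector $y$ with $A^*y \in \K^*$ (using that $\K^*$ is closed) and $\langle b,y\rangle \le 0$.

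For the forward implication, assume \MFCQp. Since $A$ is surjective, $A^*$ is injective, so any unit vector $y$ satisfies $A^*y \ne 0$. If we had $\con \Pprob = 0$, the attaining $y$ would deliver $0 \ne A^*y \in \K^*$ with $\langle b,y\rangle \le 0$, i.e., the auxiliary system \eqref{eq:auxsysprim} would be consistent, contradicting strict feasibility via Theorem~\ref{thm:primalthmalt}.

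For the reverse implication, suppose $\con \Pprob > 0$. If $A$ were not surjective, there would exist $y \ne 0$ with $A^*y = 0$; replacing $y$ by $\pm y/\|y\|$ to ensure $\langle b,y\rangle \le 0$, both $\dist_{\K^*}(A^*y) = 0$ and $\langle b,y\rangle \le 0$ would hold, forcing $\con \Pprob = 0$, a contradiction. Hence $A$ is surjective. Likewise, if strict feasibility failed, Theorem~\ref{thm:primalthmalt} would produce $y$ with $0 \ne A^*y \in \K^*$ and $\langle b,y\rangle \le 0$; since $A^*y \ne 0$ forces $y \ne 0$, normalizing again yields $\con \Pprob \le 0$, a contradiction. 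Combining, \MFCQ holds.

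The only subtlety worth flagging is keeping the two halves of \MFCQ (surjectivity of $A$ versus strict feasibility of \Pprobp) straight: the surjectivity of $A$ is exactly what is needed to translate ``$y$ nonzero'' into ``$A^*y$ nonzero,'' which is the reason the $0 \ne A^*y$ requirement in the auxiliary system matches the $\|y\| = 1$ normalization in the definition of $\con \Pprob$. Once this linkage is recognized, each direction is a one-line application of Theorem~\ref{thm:primalthmalt}.
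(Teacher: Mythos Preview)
Your proof is correct and follows essentially the same route as the paper's own argument: both directions reduce to matching the minimizer of the condition number (attained by compactness of the unit sphere) with the auxiliary system \eqref{eq:auxsysprim} from Theorem~\ref{thm:primalthmalt}, using injectivity of $A^*$ to convert $\|y\|=1$ into $A^*y\neq 0$. If anything, you are slightly more explicit than the paper in justifying attainment of the minimum and in separating the surjectivity and strict-feasibility halves of \MFCQp.
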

\begin{proof}
	Suppose  \con \Pprob\, is nonzero. Then clearly $A$ is surjective, since otherwise we could find a unit vector $y$ with $A^*y=0$ and $\langle b,y\rangle \leq 0$. Moreover, the auxiliary system 
	\eqref{eq:auxsysprim} is clearly inconsistent, and therefore \Pprob\, is strictly feasible. 
	
	Conversely, suppose \MFCQ holds. Assume for the sake of contradiction $\con \Pprob\,=0$. Then there exists a unit vector $y$ satisfying $0\neq A^*y\in \KK^*$ and $\langle b,y\rangle\leq 0$. Thus \eqref{eq:auxsysprim} is consistent, a contradiction.  
%
%
\end{proof}

\begin{theorem}[Boundedness of the dual solution set]
Suppose the problem \Pprob\, is feasible with a finite optimal value $\val$. If  the condition number $\con \Pprob$ is nonzero, then the inequality 
$$\|y\|\leq \frac{\max\{\|c\|,-\val\}}{\con \Pprob}$$ holds for all dual optimal solutions $y$. 
\end{theorem}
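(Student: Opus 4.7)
The plan is to exploit the definition of $\con \Pprob$ directly by plugging in a carefully chosen unit vector built from the dual optimal solution $y$. The trick is to use $-y/\|y\|$ rather than $y/\|y\|$, since the auxiliary system in \eqref{eq:auxsysprim} contains a sign asymmetry: the feasibility of the dual gives $c - A^*y \in \KK^*$, so $A^*(-y) = s - c$ with $s := c - A^*y \in \KK^*$.

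First, assume $y \neq 0$ (otherwise the bound is trivial) and set $\tilde y := -y/\|y\|$, which is a unit vector. From the definition of $\con \Pprob$ and positive homogeneity of the distance function,
\[
\con \Pprob \;\leq\; \max\bigl\{\dist_{\KK^*}(A^*\tilde y),\, \langle b,\tilde y\rangle\bigr\} \;=\; \frac{1}{\|y\|}\max\bigl\{\dist_{\KK^*}(-A^*y),\, -\langle b,y\rangle\bigr\}.
\]
So the whole task reduces to bounding the two terms inside the max by $\|c\|$ and $-\val$ respectively.

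For the distance term, I would use that $s := c - A^*y$ lies in $\KK^*$, hence $s$ is a feasible candidate in $\inf_{z \in \KK^*}\|-A^*y - z\|$. Taking $z = s$ gives
\[
\dist_{\KK^*}(-A^*y) \;\leq\; \|{-}A^*y - s\| \;=\; \|{-}A^*y - (c - A^*y)\| \;=\; \|c\|.
\]
For the linear-functional term, I would invoke strong duality: since $\con \Pprob \neq 0$ implies \MFCQ (and in particular Slater for the primal), Theorem~\ref{thm:str_dual} together with finiteness of the primal value yields that every dual optimal $y$ satisfies $\langle b,y\rangle = \val$. Hence $-\langle b,y\rangle = -\val$.

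Combining these two estimates and rearranging gives exactly
\[
\|y\|\,\con \Pprob \;\leq\; \max\{\|c\|,\,-\val\},
\]
which is the claimed inequality. The only conceptual obstacle is noticing that one should test the condition number with $-y/\|y\|$ rather than $y/\|y\|$; once that substitution is made, both bounds follow immediately from dual feasibility and strong duality, with no further technical work required.
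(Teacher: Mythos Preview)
Your proof is correct and follows essentially the same approach as the paper's own proof: both test the condition number at the unit vector $-y/\|y\|$, bound the distance term via the feasible slack $s=c-A^*y\in\KK^*$, and use $\langle b,y\rangle=\val$ for the linear term. Your version is in fact slightly more careful, since you explicitly justify $\langle b,y\rangle=\val$ via strong duality (which is available because $\con\Pprob\neq 0$ implies Slater), whereas the paper simply asserts this equality.
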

\begin{proof}
	Consider an optimal solution $y$ of the dual \Dprob.	The inclusion $c-A^*y\in \KK^*$ implies $\dist_{\KK^*}(-A^*y/\|y\|)\leq \|c\|/\|y\|$. Moreover, we have $\langle b,\frac{-y}{\|y\|}\rangle=\frac{-\val}{\|y\|}$. We deduce   $\con \Pprob\leq\frac{ \max\{\|c\|,-\val\}}{\|y\|}$ and the result follows.
\end{proof}

Thus the Lipschitz constant of the value function depends on the extent
to which \MFCQ holds through the condition number. What about stability of the solution set itself? The following theorem, whose proof we omit, answers this question.

\begin{theorem}[Stability of the solution set]
	Suppose \Pprob\, satisfies \MFCQp. Let $\mathcal{F}_p(g)$ be the solution set of the perturbed system 
	$$Ax= b+g,\quad x\succeq_{\mathcal{K}} 0.$$
	Fix a putative solution $\bar x\in \mathcal{F}_p$. Then there exist constants $c>0$ and $\epsilon>0$ so that the inequality
	$$\dist(x;\mathcal{F}_p(g))\leq c\cdot\|(Ax-b)-g\|$$
	holds for any $x\in \KK\cap B_{\epsilon}(\bar x)$ and $g\in B_{\epsilon}(0)$. 
	The infimal value of $c$ over all choices of $\epsilon>0$ so that the above inequalities hold is exactly
	\begin{equation}\label{eqn:MFCQ_meas_reg}
	\sup_{y:\|y\|= 1} \frac{1}{\dist(-A^*y;\face(\bar x,\K)^{\triangle})}.
	\end{equation}
\end{theorem}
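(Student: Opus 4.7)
The strategy is to recognize the claim as a statement of metric regularity for the convex set-valued map
$$F\colon \E \rightrightarrows \F, \qquad F(x) := \{Ax - b\} \text{ if } x\in \KK \text{ and } F(x) := \emptyset \text{ otherwise}.$$
Note that $\FF_p(g) = F^{-1}(g)$ and the graph $\gph F = \{(x, Ax-b) : x \in \KK\}$ is closed and convex. The desired inequality is precisely the claim that $F$ is metrically regular at $(\bar x, 0)$ with modulus at most $c$:
$$\dist(x; F^{-1}(g)) \leq c \cdot \dist(g; F(x)) \qquad \text{for all } x\in \KK\cap B_\epsilon(\bar x),~ g\in B_\epsilon(0),$$
together with the explicit identification of the sharp $c$.

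First I would invoke the Robinson--Ursescu theorem, which guarantees that a closed convex multifunction $F$ is metrically regular at $(\bar x, 0)$ whenever $0 \in \inter(\Range F)$. Under \MFCQp, surjectivity of $A$ gives $\Range A = \F$, and a Slater point $\hat x \in \inter \KK$ with $A\hat x = b$ certifies that $0 \in \inter(A(\KK) - b) = \inter(\Range F)$; hence some finite $c$ exists. To pin down the sharp value, I would use the coderivative characterization of the regularity modulus for closed convex multifunctions, which yields
$$\inf\{c : \text{the error bound holds}\} \;=\; \sup_{\|y\|=1} \frac{1}{\dist\bigl(0;\, D^*F(\bar x, 0)(y)\bigr)}.$$
Computing the coderivative amounts to writing $\gph F$ as the intersection of $\KK \times \F$ with the graph $\{(x, z) : Ax - z = b\}$; under the CQ supplied by surjectivity of $A$, the intersection calculus for normal cones applies and produces
$$N_{\gph F}(\bar x, 0) \;=\; \{(A^*y + u,\, -y) : y\in \F,~ u\in N_\KK(\bar x)\},$$
so that $D^*F(\bar x, 0)(y) = A^*y + N_\KK(\bar x)$. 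Using the basic identity $N_\KK(\bar x) = -(\KK^*\cap \bar x^{\perp}) = -\face(\bar x, \KK)^{\triangle}$ and the symmetry $y\leftrightarrow -y$ on the unit sphere, this gives exactly the expression in \eqref{eqn:MFCQ_meas_reg}.

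The main obstacle will be justifying the coderivative/normal-cone calculation at a boundary point $\bar x$ of $\KK$; this relies on verifying the qualification condition for the intersection rule, which again is where surjectivity of $A$ (the other half of \MFCQp) enters. As a sanity check, the dual expression \eqref{eqn:MFCQ_meas_reg} blows up to $+\infty$ exactly when \MFCQ fails: if $A$ is not surjective, one can pick a unit $y$ with $A^*y = 0$ and the denominator vanishes trivially; if Slater fails, Theorem~\ref{thm:primalthmalt} furnishes $y\neq 0$ with $0\neq A^*y\in \KK^*$ and $\langle b,y\rangle = 0$, so that $\langle A^*y,\bar x\rangle = \langle y,b\rangle = 0$ and hence $A^*y \in \face(\bar x,\KK)^{\triangle}$, again driving the denominator to zero. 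This consistency between the primal error bound and the dual theorem of the alternative is, in effect, the geometric content of the theorem.
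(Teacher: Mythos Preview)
The paper explicitly omits the proof of this theorem (``The following theorem, whose proof we omit, answers this question''), so there is no in-text argument to compare against. Your approach via metric regularity---casting the inequality as metric regularity of the convex multifunction $F(x)=\{Ax-b\}+\iota_{\KK}(x)$, invoking Robinson--Ursescu for existence, and then the coderivative criterion for the exact modulus---is a correct and standard route; indeed the paper's Commentary for this chapter points to Robinson and to the Dontchev--Lewis--Rockafellar ``radius of metric regularity'' paper, which is precisely the circle of ideas you are using.

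Two small points worth tightening. First, the qualification condition for the normal-cone intersection rule $N_{\gph F}(\bar x,0)=N_{\KK\times\F}(\bar x,0)+N_{\{Ax-z=b\}}(\bar x,0)$ is furnished by the Slater point $(\hat x,0)\in(\inter\KK\times\F)\cap\{Ax-z=b\}$, not by surjectivity of $A$; surjectivity is what you need one step earlier, to pass from ``$\hat x\in\inter\KK$ with $A\hat x=b$'' to ``$b\in\inter A(\KK)$'' via the open mapping theorem, so that Robinson--Ursescu applies. Both halves of \MFCQ are used, just not quite where you placed them. Second, your sanity check that \eqref{eqn:MFCQ_meas_reg} blows up exactly when \MFCQ fails is correct and is a nice way to close the loop with Theorem~\ref{thm:primalthmalt}; you might also remark that when $\bar x\in\inter\KK$ the conjugate face is $\{0\}$ and the expression collapses to $1/\inf_{\|y\|=1}\|A^*y\|$, the reciprocal of the smallest singular value of $A$, which is the expected answer in the unconstrained case.
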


\bigskip
In particular, under \MFCQp, we can be sure that for any point $\bar x\in \mathcal{F}_p$, there exist $c$ and $\epsilon >0$ satisfying 
$$\dist(\bar x;\mathcal{F}_p(g))\leq c\cdot \|g\|\qquad \textrm{ for all }g\in B_{\epsilon}(0).$$
In other words, the distance $\dist(\bar x;\mathcal{F}_p(g))$, which measures the how far $\mathcal{F}_p(g)$ has moved relative to $\bar x$, is bounded by a multiple of the perturbation parameter $\|g\|$.
The proportionality constant $c$ is fully governed by the strength of \MFCQp, as measured by the quantity \eqref{eqn:MFCQ_meas_reg}. 

\section{Distance to infeasibility}
In numerical analysis, the notion of stability is closely related to the ``distance to infeasibility'' -- the smallest perturbation needed to make the problem infeasible. A simple example is the problem of solving an equation $Lx=b$ for an invertible matrix $L\colon\R^n\to\R^n$. Then the Eckart-Young theorem shows equality
$$\min_{G\in \R^{n\times n}} \{\|G\|: L+G \textrm{ is singular}\}=\frac{1}{\|L^{-1}\|}.$$
Here $\|G\|$ denotes the operator norm of $G$.
The left-hand-side measures the smallest perturbation $G$ needed to make the system $(L+G)x=b$ singular, while the right-hand-side measures the Lipschitz dependence of the solution to the linear system $Lx=b$ relative to perturbations in $b$, and yet the two quantities are equal.
An entirely analogous situation holds in conic optimization, with \MFCQ playing the role of invertibility.

\begin{definition}[Distance to infeasibility]
{\rm The {\em distance to infeasibility} of \Pprobp is the infimum of the quantity $\max\{\|G\|,\|g\|\}$ over linear mappings $G$ and vectors $g$ such that the system 
$$(A+G)x= b+g,\quad x\succeq_{\mathcal{K}} 0\qquad\qquad \textrm{ is infeasible}.$$
}
\end{definition}

This quantity does not change if instead of the loss of feasibility, we
consider the loss of strict feasibility. The following fundamental
result equates the condition number (measuring the strength of \MFCQp) and the distance to infeasibility.

\begin{theorem}[Strict feasibility and distance to infeasibility]\label{thm:radius}
The following exact equation is always true:
$$\textrm{distance to infeasibility}=\cond \Pprob.$$
\end{theorem}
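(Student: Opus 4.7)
The plan is to compute the infimum defining the distance to infeasibility by optimizing separately over the perturbation $(G,g)$ and the dual certificate $y$, then to recognize the resulting expression as $\cond \Pprob$. By the remark immediately preceding the theorem, it suffices to work with the distance to loss of strict feasibility.

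First I would apply Theorem~\ref{thm:primalthmalt} to the perturbed data $(A+G,\,b+g)$: the perturbed primal fails strict feasibility if, and only if, there exists a unit vector $y$ with $0 \neq (A+G)^*y \in \K^*$ and $\langle b+g,y\rangle \leq 0$. Rewriting the first condition as $G^*y = z - A^*y$ for some $z\in \K^*\setminus\{0\}$, the distance to (strict) infeasibility becomes
\[
\inf_{\|y\|=1}\ \inf\Bigl\{\max\{\|G\|,\|g\|\}\ :\ G^*y = z-A^*y\text{ for some }z\in \K^*\setminus\{0\},\ \langle g,y\rangle\leq -\langle b,y\rangle\Bigr\}.
\]

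Next I would evaluate the inner infimum for a fixed unit vector $y$. For the $G$-coordinate, the bound $\|G\| \geq \|G^*y\| = \|z-A^*y\|$ is sharp: it is attained by the rank-one operator $Gx := \langle z-A^*y,x\rangle\, y$, whose operator norm is exactly $\|z-A^*y\|\cdot\|y\| = \|z-A^*y\|$. Minimizing over admissible $z$ gives $\inf_{z\in \K^*\setminus\{0\}}\|z-A^*y\| = \dist_{\K^*}(A^*y)$; the omission of the origin is immaterial because $\K^*$ has nonempty interior and is nonzero, so every point of $\K^*$ is the limit of nonzero elements. For the $g$-coordinate, Cauchy–Schwarz gives $\|g\| \geq -\langle g,y\rangle \geq \langle b,y\rangle$ whenever $\langle b,y\rangle > 0$, and the bound $\max\{0,\langle b,y\rangle\}$ is attained by $g = -\max\{0,\langle b,y\rangle\}\,y$.

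Combining, the inner infimum equals $\max\{\dist_{\K^*}(A^*y),\,\max\{0,\langle b,y\rangle\}\}$; since $\dist_{\K^*}(A^*y)\geq 0$, the inner $\max\{0,\cdot\}$ is absorbed, leaving $\max\{\dist_{\K^*}(A^*y),\,\langle b,y\rangle\}$. Taking the infimum over unit $y$ recovers $\cond \Pprob$ by definition, completing the proof. The main subtlety is handling the nonzero-certificate requirement $(A+G)^*y\neq 0$, i.e.\ the constraint $z\neq 0$ in the inner problem; a secondary routine point is verifying that the minimum-norm linear map with prescribed image $w$ of a unit vector $y$ is precisely the rank-one map $x\mapsto \langle w,x\rangle y$, so that no non-trivial structural perturbation beats the scalar bound.
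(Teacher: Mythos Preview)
The paper does not supply its own proof of this theorem; it is stated without argument and attributed in the commentary to \cite{radius} and Renegar's earlier work \cite{Ren:93,Ren:94}. Your argument is therefore an addition rather than a reproduction, and it is essentially correct: the reduction via Theorem~\ref{thm:primalthmalt}, the rank-one construction $Gx=\langle w,x\rangle\,y$ achieving $\|G\|=\|w\|$ exactly, the Cauchy--Schwarz bound for $g$, and the decoupled minimization of $\max\{\|G\|,\|g\|\}$ are all sound.

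One small correction: you justify dropping the constraint $z\neq 0$ by asserting that ``$\K^*$ has nonempty interior,'' but the paper only assumes $\K$ has nonempty interior, not that $\K$ is pointed, so $\K^*$ need not have interior. The conclusion you need still holds: in the only nontrivial case $\K\neq\E$, the dual cone $\K^*$ contains some nonzero vector $v$, and then $tv\in\K^*\setminus\{0\}$ with $tv\to 0$ as $t\downarrow 0$, so $\inf_{z\in\K^*\setminus\{0\}}\|z-A^*y\|=\dist_{\K^*}(A^*y)$ regardless. With that adjustment the proof is complete.
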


\section{Commentary}
\label{sect:virtuescommentary}
The classical theorem of the alternative is Farkas Lemma that appears in
proofs of duality in linear programming, as well as in more general nonlinear
programming, after linearizations. This and more general theorems of 
the alternative are given in e.g.,~ the 1969 book by Mangasarian
\cite{Mang:69} and in the 1969 survey paper by Ben-Israel~\cite{MR0249108}.
The specific theorems of the alternative that we use are similar
to the one used in the \FR development in~\cite{bw1,bw2,bw3}.

The Mangasarian-Fromovitz \CQ was introduced in
\cite{MR34:7263}. This condition and its equivalence to stability with respect
to perturbations in the data and compactness of the multiplier set
 has been the center of extensive research,
e.g.,~\cite{MR0441352}. The analogous conditions for general nonlinear convex constraint systems is the \textdef{Robinson regularity condition},
e.g.,\cite{rob75a,rob76a}. The notion of distance to infeasibility and relations to condition numbers was initiated by Renegar e.g.,~\cite{Ren:93,Ren:94,MR96c:90048,MR1757553}. 
The relation with Slater's condition is clear. Theorem~\ref{thm:radius}, as stated, appears in~\cite{radius}, though in essence it is present in Renegar's work  \cite{Ren:93,Ren:94}.

\chapter{Facial reduction}
\label{chap:FR}
Theorems~\ref{thm:primalthmalt} and \ref{thm:dualthmalt} have already set
the stage for the ``Facial Reduction'' procedure, used for regularizing
degenerate conic optimization problems by restricting the problem to
smaller and smaller dimensional faces of the cone $\KK$. In this
section, we formalize this viewpoint, empathizing semi-definite programming. Before we proceed with a detailed description, it is instructive to look at the simplest example of Linear Programming. In this case, a single iteration of the facial reduction procedure corresponds to finding redundant variables (in the primal) and implicit equality constraints (in the dual).

\section{Preprocessing in linear programming}
Improvements in the solution methods for large-scale linear programming problems have
been dramatic since the late 1980's. A technique that has become
essential in commercial software is a
\textdef{preprocessing step} for the linear program before sending it to the
solver. The preprocessing has many essential features. For example, it
removes redundant variables (in the primal) and implicit equality constraints 
(in the dual) thus potentially dramatically
reducing the size of the problem while simultaneously improving the
stability of the model. 
These steps in linear programming are examples of the Facial Reduction procedure, which we will formalize shortly.


\begin{example}[primal facial reduction]
	Consider the problem
	\[
	\begin{array}{cl}
	\min &  
	\begin{pmatrix} 2 & 6 & -1 & -2 & 7 \end{pmatrix} x  \\
	\text{s.t.} & 
	\begin{bmatrix} 
	1 & 1 & 1 & 1 & 0  \cr 
	1 & -1 & -1 & 0 & 1  \cr 
	\end{bmatrix} x = 
	\begin{pmatrix} 1\cr -1 \end{pmatrix} \\
	&   x \geq 0.
	\end{array}
	\]
	If we sum the two constraints we see 
	\[
	x\geq 0 \quad \textrm{and}\quad 2x_1+x_4 + x_5 = 0\quad \implies\quad  x_1=x_4=x_5=0.
	\]
	Thus the coordinates $x_1$, $x_4$, and $x_5$ are identically zero on the entire feasible set. In other words, the feasible region is contained in the proper face $\{x\geq 0: x_1=x_4=x_5=0\}$ of the cone $\R^n_+$. The zero coordinates can easily be eliminated and the corresponding columns discarded, yielding the equivalent simplified problem in the smaller face:
	\[
	\begin{array}{cl}
	\min &  
	\begin{pmatrix}  6 & -1  \end{pmatrix} 
	\begin{pmatrix}  x_2 \cr x_3  \end{pmatrix} \\
	\text{s.t.} & 
	\begin{bmatrix} 
	1 & 1 \\
	-1 &-1
	\end{bmatrix} 
	\begin{pmatrix}  x_2 \cr x_3  \end{pmatrix} = \begin{pmatrix} 1 \\ -1\end{pmatrix}  \\
	& x_2 , x_3   \geq 0,\quad x_1 = x_4= x_5=0.
	\end{array}
	\]
The second equality can now also be discarded as it is is equivalent to the first.  
\end{example}

How can such implicit zero coordinates be discovered systematically? Not surprisingly, the auxiliary system \eqref{eq:auxsysprim_feas} provides the answer: 
$$0\neq A^Ty \geq 0 ,\qquad  b^Ty=0.$$ Suppose $y$ is feasible for this auxiliary system. Then for any $x$ feasible for the problem, we deduce
$0\leq  \sum_i x_i(A^Ty)_i=x^T(A^Ty)=b^Ty=0$.
Thus all the coordinates 
$x_i$, for which the strict inequality $(A^Ty)_i>0$ holds, must be zero.

\begin{example}[dual facial reduction]
	A similar procedure applies to the dual. Consider the problem
	\[
	\begin{array}{cl}
	\max &  
	\begin{pmatrix} 2 & 6 \end{pmatrix} y  \\
	\text{s.t.} & 
	\begin{bmatrix} 
	-1 & -1   \cr 
	1 & 1   \cr 
	1 & -1   \cr 
	-2 & 2   \cr 
	\end{bmatrix} y \leq 
	\begin{pmatrix} 1\cr 2 \cr 1 \cr -2  \end{pmatrix}.
	\end{array}
	\]
	Twice the third row plus the fourth row sums to zero. We conclude that
	the last two constraints are implicit equality constraints.
	Thus after 
	substituting $\begin{pmatrix}y_1 \cr y_2 \end{pmatrix} = 
	\begin{pmatrix}1 \cr 0  \end{pmatrix} +
	t \begin{pmatrix}1 \cr 1  \end{pmatrix}$, we obtain a simple univariate problem.
Again, this discovery of implicit equality constraints can be done systematically by considering the auxiliary system~\eqref{eq:auxsysprim_feas_dual}:
$$0\neq x\geq 0,\quad Ax=0,\quad \textrm{and}\quad c^\top x=0.$$ Suppose we find such a vector $x$. Then for any feasible vector $y$ we deduce
$0\leq \sum_{i} x_i(c-A^Ty)_i= x^T(c-A^Ty)=0$.
Thus for each positive component $x_i>0$, the corresponding inequality 
$(c-A^Ty)_i\geq 0$ is fulfilled with equality along the entire feasible region.
\end{example}

\section{Facial reduction in conic optimization}
Keeping in mind the example of Linear Programming, we now formally
describe the Facial Reduction procedure. To do this, consider the primal problem \Pprobp failing Slater's condition. Our goal is to find an equivalent  problem that does satisfy Slater's condition. To this end, suppose that we had a description of
 $\face(\mathcal{F}_p,\KK)$ -- the minimal face of $\KK$ containing the feasible region $\mathcal{F}_p$. 
Then we could replace $\KK$ with $\KK':=\face(\mathcal{F}_p,\KK)$, $\E$ with $\E':=\spann  \KK'$, and $A$ with its restriction to $\E'$. The resulting smaller dimensional primal problem would automatically satisfy Slater's condition, since $\mathcal{F}_p$ intersects the relative interior of $\KK'$. 

The Facial Reduction procedure is a conceptual method that at termination discovers  $\face(\mathcal{F}_p,\KK)$. Suppose that $\K$ has nonempty interior. In the first iteration, the scheme
determines any vector $y$ satisfying the auxiliary system \eqref{eq:auxsysprim_feas}. If no such vector exists, Slater's condition holds and the method terminates. Else, Theorem~\ref{thm:primalthmalt} guarantees that $\mathcal{F}_p$ lies in proper face $\K':=(A^*y)^{\perp}\cap \K$. Treating $\K'$ as a subset of the ambient Euclidean space $\E':=\spann\K'$ yields the smaller dimensional reformulation of $\Pprobp$:
\begin{equation}
\begin{array}{rcc}
\displaystyle \min_{x\in \E'} &\langle c,x\rangle \\
\textrm{s.t.} & Ax=b  \\
& x\in \KK'. 
\end{array}
\end{equation}
We can now repeat the process on this problem. Since the dimension of the problem decreases with each facial reduction iteration, the procedure will terminate after at most $\dim \E$ steps. 

\begin{definition}[Singularity degree]
The {\em singularity degree} of \Pprob, denoted $\sing\Pprob$, 
\index{singularity degree of \Pprob, $\sing\Pprob$} 
\index{$\sing\Pprob$, singularity degree of \Pprob}
is the minimal number of iterations that are necessary for the Facial Reduction to terminate, over all possible choices of certificates generated by the auxiliary systems in each iteration.
\end{definition}
	
The singularity degree of linear programs is at most one, as we will see
shortly. More generally, such as for semi-definite programming problems, the singularity degree can be much higher.

The Facial Reduction procedure applies to the dual problem \Dprobp by using the equivalent primal form \eqref{eqn:dual_primal} and using Theorem~\ref{thm:dualthmalt}. We leave the details to the reader.

\section{Facial reduction in semi-definite programming}
Before discussing further properties of the Facial Reduction algorithm
in conic optimization, let us illustrate the procedure in semi-definite programming. To this end, consider the primal problem \Pprobp with $\K={\bf S}^n_+$. Suppose that we have available a vector $y$ feasible for the auxiliary system \eqref{eq:auxsysprim_feas}. Form now an eigenvalue decomposition 
$$A^*y=\begin{bmatrix}
U & V
\end{bmatrix}
\begin{bmatrix}
\Lambda & 0\\
0 & 0
\end{bmatrix}
\begin{bmatrix}
U & V
\end{bmatrix}^T,
$$
where $\begin{bmatrix}
	U & V
\end{bmatrix}\in \R^{n\times n}$ is an orthogonal matrix and $\Lambda\in {\bf S}^r_{++}$ is a diagonal matrix. Then as we have seen in Example~\ref{exa:face_snp}, the matrix $A^*y$ exposes the face $V{\bf S}^{n-r}V^T$ of ${\bf S}^n_{+}$.
Consequently, defining the linear map $\widetilde A(Z)=A(VZV^T)$, the
primal problem \Pprobp is equivalent to the smaller dimensional \SDP
\begin{equation}
\begin{array}{rcc}
\displaystyle \min_{Z} &\langle V^TCV,Z\rangle \\
\textrm{s.t.} & \widehat{A}(Z)=b  \\
& Z\in {\bf S}^{n-r}_+.
\end{array}
\end{equation}
Thus one step of facial reduction is complete. Similarly let us look at the dual problem \Dprob. Suppose that $X$ is feasible for the auxiliary system 
\eqref{eq:auxsysprim_feas_dual}. Let us form an eigenvalue decomposition 
$$X=\begin{bmatrix}
U & V
\end{bmatrix}
\begin{bmatrix}
\Lambda & 0\\
0 & 0
\end{bmatrix}
\begin{bmatrix}
U & V
\end{bmatrix}^T,
$$
where $\begin{bmatrix}
U & V
\end{bmatrix}\in \R^{n\times n}$ is an orthogonal matrix and $\Lambda\in {\bf S}^r_{++}$ is a diagonal matrix. The face exposed by $X$, namely 
$V{\bf S}^{n-r}V^T$, contains all the feasible slacks $\{C-A^*y: y\in \mathcal{F}_d\}$ by Theorem~\ref{thm:dualthmalt}. Thus defining the linear map $L(y):=V^T(A^*y)V$, the dual \Dprobp is equivalent to the problem
\begin{equation}
\begin{array}{rcc}
\displaystyle \max_{y} &\langle y,b\rangle \\
\textrm{s.t.} & L(y)\preceq_{{\bf S}^{n-r}} V^TCV.
\end{array}
\end{equation}
Thus one step of Facial Reduction is complete and the process can continue.

To drive the point home, the following simple example shows that for \SDP the
singularity degree can indeed be strictly larger than one.
\begin{example}[Singularity degree larger than one]\label{exa:basic_fail_sing}
	Consider the primal \SDP feasible region
	$$\mathcal{F}_p=\{X\in {\bf S}^3_+: X_{11}=1, X_{12}+X_{33}=0, X_{22}=0\}.$$
	Notice the equality $X_{22}=0$ forces the second row and column of $X$ to be zero, i.e. they are redundant. Let us see how this will be discovered by Facial Reduction.
	
The linear map $A\colon{\bf S^3}\to\R^3$ has the form 
$$AX=(\tr(A_1X),\tr(A_2X),\tr(A_3X))^T$$ 
for the matrices	
$$A_1=\begin{bmatrix}
1 & 0 & 0\\
0 & 0& 0\\
0 & 0& 0
\end{bmatrix},\quad 
A_2=\begin{bmatrix}
0 & 1/2 & 0\\
1/2 & 0& 0\\
0 & 0& 1
\end{bmatrix},\quad \textrm{and}\quad 
A_3=\begin{bmatrix}
0 & 0 & 0\\
0 & 1& 0\\
0 & 0& 0
\end{bmatrix}.
$$
Notice that $\mathcal{F}_p$ is nonempty since it contains the rank $1$ matrix $e_1e_1^T$. The auxiliary system \eqref{eq:auxsysprim_feas} then reads
$$0\neq 
\begin{bmatrix}
v_1 & v_2/2 & 0\\
v_2/2 & v_3& 0\\
0 & 0& v_2
\end{bmatrix}\succeq 0  \quad \textrm{and}\quad v_1=0.$$
Looking at the second principal minor, we see $v_3=0$. Thus all feasible
$v$ are positive multiples of the vector $e_3=\begin{pmatrix}
0&0&1\end{pmatrix}^T$. One step of Facial Reduction using the exposing vector 
$A^*e_3=e_2e_2^T$ yields the equivalent reduced region
$$\left\{\begin{bmatrix} X_{11} & X_{13}\\
X_{13} & X_{33}
\end{bmatrix}\succeq 0: X_{11}=1, X_{33}=0\right\}.$$
This reduced problem clearly fails Slater's condition, and Facial Reduction can continue. Thus the singularity degree of this problem is exactly two.
\end{example}

The pathological Example~\ref{exa:basic_fail_sing} can be generalized to higher dimensional space with $n=m$, by nesting, leading to problems with singularity degree $n-1$; the construction is explained in Tun{\c{c}}el~\cite[page 43]{MR2724357}.

%
%
%

\section{What facial reduction actually does}
There is a direct and enlightening connection between Facial Reduction
and the geometry of the image set $A\KK$. To elucidate this relationship, we first note the following equivalent characterization of Slater's condition.

\begin{proposition}[Range space characterization of Slater]\label{prop:range}
The primal problem \Pprobp is strictly feasible if, and only if, the vector $b$ lies in the relative interior of  $A(\K)$.
\end{proposition}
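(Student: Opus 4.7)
The plan is to reduce the statement to the standard convex-analytic identity $A(\ri C)=\ri A(C)$, valid for any linear map $A$ and convex set $C$ (this is the ``relative interior commutes with linear images'' fact, e.g.\ Rockafellar, Theorem 6.6). Since $\K$ is assumed to have nonempty interior throughout, $\ri \K=\inter\K$, and strict feasibility of \Pprobp is precisely the statement $b\in A(\inter\K)=A(\ri\K)$. Thus once the commuting identity is invoked, both directions reduce to the trivial equivalence
\[
b\in A(\ri\K)\quad\Longleftrightarrow\quad b\in \ri A(\K).
\]

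For the forward direction, suppose $x$ is strictly feasible, i.e., $x\in\inter\K=\ri\K$ and $Ax=b$. Then $b\in A(\ri\K)=\ri A(\K)$. For the reverse direction, assume $b\in\ri A(\K)=A(\ri\K)$, and pick any $x\in\ri\K=\inter\K$ with $Ax=b$; this $x$ is a Slater point.

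The only real content is the identity $A(\ri\K)=\ri A(\K)$. The inclusion $A(\ri\K)\subseteq\ri A(\K)$ follows by observing that if $x\in\ri\K$, then any direction $d\in \aff A(\K)-A(x)$ lifts (since $A$ maps $\aff\K-x$ onto $\aff A(\K)-A(x)$) to a direction in $\aff\K-x$ along which $x$ can be perturbed while remaining in $\K$; the image of this perturbation stays in $A(\K)$, certifying $A(x)\in\ri A(\K)$. The reverse inclusion $\ri A(\K)\subseteq A(\ri\K)$ uses a line-segment argument: given $y\in\ri A(\K)$, write $y=Ax_0$ for some $x_0\in\K$; pick any $\bar x\in\ri\K$, so $A\bar x\in\ri A(\K)$ by the first inclusion; then for small $t>0$ the point $y+t(y-A\bar x)$ still lies in $A(\K)$, hence equals $Aw$ for some $w\in\K$, and the identity $y=\tfrac{1}{1+t}Aw+\tfrac{t}{1+t}A\bar x$ together with the line-segment principle $[\bar x,w)\subseteq \ri\K$ yields a preimage of $y$ in $\ri\K$. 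I expect the authors to simply cite this standard fact rather than reprove it, since it is called out in their commentary section as a textbook result from Rockafellar.
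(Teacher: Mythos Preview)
Your proof is correct, and your expectation about the paper is accurate: the paper states this proposition without proof, in keeping with its stated policy (in the commentary to Chapter~\ref{sect:convgeom}) that ``the relevant results stated in the text can all be found for instance in Rockafellar.'' Your reduction to the identity $A(\ri\K)=\ri A(\K)$ is the standard argument, and your sketch of that identity via the line-segment principle is sound. There is nothing to compare against; you have supplied the proof the paper omits.
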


The following is the central result of this section.
\begin{theorem}[Fundamental description of the minimal face]
\label{thm:face_red} {\hfill \\ }
Assume that the primal \Pprobp is feasible.
Then a vector $v$ exposes a proper face of
$A(\K)$ containing $b$ if, and only if, $y$ satisfies the auxiliary system \eqref{eq:auxsysprim_feas}.
Defining for notational convenience $N:=\face(b, A(\K))$, the following are true.
\\ (I) $\quad$ 
	We always have:
\[
		\face(\mathcal{F}_p, \K)=\K \cap A^{-1}N.
\]
(II) $\quad$ For any vector $y\in \F$ the following equivalence holds:
\[
	y \textrm{ exposes } N \quad\Longleftrightarrow\quad 
	A^*y \textrm{ exposes } \face(\mathcal{F}_p, \K).
\]
In particular, the inequality $\sing\Pprobp\leq 1$ holds if, and only
if, $\face(b, A(\K))$ is an exposed face of $A(\K)$.
\end{theorem}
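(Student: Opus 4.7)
The plan is to establish the three pieces in order: the opening equivalence, then the structural identity (I), then derive (II) and the singularity-degree corollary as formal consequences of (I).

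First, the equivalence between exposing vectors of proper faces of $A(\K)$ containing $b$ and solutions of the auxiliary system \eqref{eq:auxsysprim_feas} will follow by unpacking definitions. A vector $y$ lies in $(A(\K))^{*}$ precisely when $\langle y,Ax\rangle=\langle A^{*}y,x\rangle\geq 0$ for every $x\in\K$, i.e., $A^{*}y\in\K^{*}$. The exposed face $y^{\perp}\cap A(\K)$ is a proper face exactly when $A^{*}y\neq 0$ (otherwise it equals all of $A(\K)$), and it contains $b\in A(\K)$ exactly when $\langle b,y\rangle=0$. These three conditions together are \eqref{eq:auxsysprim_feas}.

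For (I), let $F:=\face(\mathcal{F}_p,\K)$. The inclusion $F\subseteq \K\cap A^{-1}N$ is automatic: if $x,z\in\K$ with $x+z\in \K\cap A^{-1}N$, then $Ax,Az\in A(\K)$ and $Ax+Az\in N\unlhd A(\K)$ forces $Ax,Az\in N$; hence $\K\cap A^{-1}N$ is a face of $\K$, and it trivially contains $\mathcal{F}_p$ because $b\in N$. The reverse inclusion is the technical heart. Take any $x\in\K$ with $Ax\in N$. Because $b\in\ri N$ (by the definition $N=\face(b,A(\K))$), there exists $s>0$ with $b+s(b-Ax)\in N\subseteq A(\K)$, so we can pick $u\in\K$ with $Au=b+s(b-Ax)$. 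Rearranging,
$$b \;=\; A\!\left(\tfrac{u+sx}{1+s}\right),$$
which means $\tfrac{u+sx}{1+s}\in\mathcal{F}_p\subseteq F$. Since this is a strict convex combination of $u,x\in\K$ landing in the face $F$, the face property delivers $u,x\in F$, and in particular $x\in F$.

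For (II), I use the identity $(A^{*}y)^{\perp}\cap \K=\{x\in\K:\langle y,Ax\rangle=0\}=\K\cap A^{-1}(y^{\perp}\cap A(\K))$, valid whenever $A^{*}y\in\K^{*}$. If $y$ exposes $N$, substituting $y^{\perp}\cap A(\K)=N$ and invoking (I) gives $(A^{*}y)^{\perp}\cap\K=F$, so $A^{*}y$ exposes $F$. Conversely, if $A^{*}y$ exposes $F$, then (I) gives $\K\cap A^{-1}(y^{\perp}\cap A(\K))=\K\cap A^{-1}N$; applying $A$ and using the tautology $A(\K\cap A^{-1}M)=M$ for any $M\subseteq A(\K)$ yields $y^{\perp}\cap A(\K)=N$. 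The singularity-degree corollary then drops out: if $\sing\Pprob=0$ then $F=\K$ and $N=A(\K)$ is the improper face of itself, exposed by $0$; if $\sing\Pprob=1$ then some $y$ satisfying \eqref{eq:auxsysprim_feas} has $(A^{*}y)^{\perp}\cap\K=F$, and by (II) this same $y$ exposes $N$. The converse direction runs through (II) in reverse, noting that $N\neq A(\K)$ (equivalent to failure of Slater) forces any exposing vector of $N$ to be nonzero with $A^{*}y\neq 0$, furnishing a valid first-step certificate.

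The main obstacle is the reverse inclusion of (I): converting the hypothesis $Ax\in N$ (a statement about the image) into $x\in F$ (a statement about the preimage). The device of pushing slightly past $b$ along the segment from $Ax$ through $b$ — available precisely because $b\in\ri N$ — is what unlocks it, because it produces a second preimage $u$ whose convex combination with $x$ lies in $\mathcal{F}_p$, so the face property of $F$ applies.
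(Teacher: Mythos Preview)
The paper does not include a proof of this theorem; it records the result and cites~\cite{DrPaWo:14} in the Commentary to Chapter~\ref{chap:FR}. Your argument is correct and complete: the relative-interior extension trick for the reverse inclusion in (I) and, for (II), the identity $(A^{*}y)^{\perp}\cap\K=\K\cap A^{-1}(y^{\perp}\cap A(\K))$ combined with the tautology $A(\K\cap A^{-1}M)=M$ for $M\subseteq A(\K)$ are exactly what is needed, and your handling of the singularity-degree corollary correctly separates the Slater and non-Slater cases.
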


Some commentary is in order. First, as noted in Proposition
\ref{prop:range}, the primal \Pprobp is strictly feasible if, and only
if, the right-hand-side $b$ lies in $\ri A(\KK)$. Thus when strict feasibility fails, the set $N=\face (b, A(\K))$
is a proper face of the image $A(\K)$. The theorem above yields the
exact description $\face(\mathcal{F}_p, \K)=\K \cap A^{-1}N$ of the
object we are after. On the other hand, determining a facial description of $A(\K)$ is a difficult proposition. Indeed, even when $\K$ is a simple cone, the image $A(\K)$ can be a highly nontrivial object. For instance, the image $A({\bf S}^n_+)$ may fail to be facially exposed or even closed; examples are forthcoming. 

Seeking to obtain a description of $N$, one can instead try to find ``certificates'' $y$ exposing a proper face of $A(\K)$ containing $b$. Such vectors $y$ are precisely those satisfying the auxiliary system \eqref{eq:auxsysprim_feas}. In particular, 
part II of Theorem~\ref{thm:face_red} yields a direct obstruction to
having low singularity degree: $\sing\Pprobp\leq 1$ if, and only if, $\face(b, A(\K))$ is an exposed face of $A(\K)$.
Thus the lack of facial exposedness of the image $A(\mathcal{K})$ can become an obstruction. For the cone $\K=\R^n_+$, the image $A(\mathcal{K})$ is polyhedral and is therefore facially exposed. On the other hand, linear images of the cone $\K={\bf S}^n_+$ can easily fail to be facially exposed. This in essence is the reason why preprocessing for general conic optimization is much more difficult than its linear programming counterpart (having singularity degree at most one).

The following two examples illustrate the possibly complex geometry of image sets $A({\bf S}^n)$.

\begin{example}[Linear image not closed]
Define the linear map $A\colon{\bf S}^2\to\R^2$ by
\[
	A(X):= \begin{pmatrix} 2X_{12}\cr X_{22}
       \end{pmatrix}
\]
Then the image $A({\bf S}_+^2)$ is not closed, since
\[
A\left(\begin{bmatrix} k & 1 \cr 1 & \frac 1k \end{bmatrix}\right) \rightarrow
\begin{pmatrix} 2\cr 0 \end{pmatrix} \notin A\left({\bf S}^2_+\right).
\]
More broadly, it is easy to see the equality
\[
	A\left({\bf S}^2_+\right) = \{(0,0)\} \cup (\R\times \R_{++}).
\]
\end{example}

\begin{example}[Linear image that is not facially exposed]\label{exa:lin_im_not_exp}
Consider the feasible region in Example~\ref{exa:basic_fail_sing}. There we showed that the singularity degree is equal to two. Consequently, by Theorem~\ref{thm:face_red} we know that the minimal face of $A(\K)$ containing 
	$b=\begin{pmatrix} 1&0&0\end{pmatrix}^T$ must be nonexposed. 

Let us verify this directly. To this end, we can without loss of generality treat $A$ as mapping into ${\bf S}^2$ via $$A(X)=\begin{bmatrix}
X_{11}&X_{12}+X_{33}\\
X_{12}+X_{33}& X_{22}
\end{bmatrix}
$$ 
and identify $b$ with $e_1e_1^T$.
Then the image $A({\bf S}^3_+)$ is simply the sum, 
$$A({\bf S}^3_+)={\bf S}^{2}_+ +\cone\begin{bmatrix}
0&1\\
1&0\\
\end{bmatrix}.$$
 See Figure~\ref{fig:rotatedcone}.
 \begin{figure}[!ht]
 	\centering
 	\includegraphics[scale=0.5]{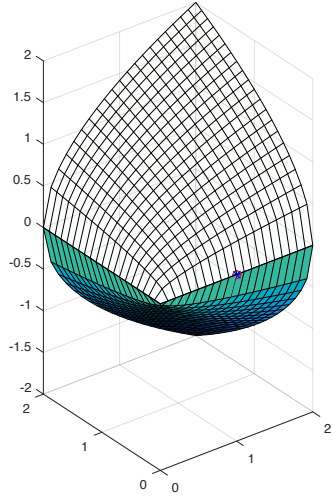}
 	\caption{Nonexposed face of the image set}
 	\label{fig:rotatedcone}
 \end{figure}
 
  Consider the set $$G:=\cone \begin{bmatrix}
 1 & 0\\
 0 & 0
 \end{bmatrix}.$$ We claim that $G$ is a face of $A({\bf S}^3_+)$ and is therefore the minimal face containing $e_1e_1^T$. Indeed, suppose we may write 
$$\begin{bmatrix}
1 & 0\\
0 & 0
\end{bmatrix}
=\begin{bmatrix}
X_{11}&X_{12}+X_{33}\\
X_{12}+X_{33}& X_{22}
\end{bmatrix}+\begin{bmatrix}
X_{11}'&X_{12}'+X_{33}'\\
X_{12}'+X_{33}'& X_{22}'
\end{bmatrix},$$
for some matrices $X,X'\in {\bf S}^3_+$.
Comparing the $2,2$-entries, we deduce $X_{22}=X_{22}'=0$ and consequently $X_{12}=X_{12}'=0$.
Comparing the 1,2-entries yields $X_{33}=X_{33}'=0$. Thus both summands lie in $G$; therefore $G$ is a face of the image $A({\bf S}^3_+)$. Next, using Lemma~\ref{lem:dual_sum}, observe 
$$A({\bf S}^3_+)^*=\left({\bf S}^{2}_++\cone\begin{bmatrix}
0&1\\
1&0\\
\end{bmatrix}\right)^*=\{Y\in {\bf S}^2_+: Y_{12}\geq 0\}.$$
Consequently, any matrix exposing $G$ must lie in the set
$$A({\bf S}^3)^*\cap G^{\perp}=\cone (e_2e_2^T).$$
On the other hand, the set $$(e_2e_2^T)^{\perp}\cap A({\bf S}^3_+)=\cone \begin{bmatrix}
1 & 0\\
0 & 0
\end{bmatrix}+\cone\begin{bmatrix}
0 & 1\\
1 & 0
\end{bmatrix}$$
is clearly strictly larger than $G$. Hence $G$ is not an exposed face.

\end{example}

\section{Singularity degree and the \textdef{H\"older error bound} in \SDPp}
For semi-definite programming, the singularity degree plays an especially 
important role, controlling the \emph{H\"olderian stability of the feasible 
region}. Consider two sets $Q_1$ and $Q_2$ in $\E$. A convenient way to understand the regularity of the intersection $Q_1\cap Q_2$ is to determine the extent to which the computable residuals, $\dist(x,Q_1)$ and $\dist(x,Q_2)$, bound the 
\emph{error} $\dist(x,Q_1\cap Q_2)$. Relationships of this sort are commonly 
called {\em error bounds} of the intersection and play an important role for 
convergence and stability of algorithms. Of particular importance are 
H\"olderian error bounds -- those asserting the inequalities
$$
\dist(x,Q_1\cap Q_2)\leq \mathcal{O}
\left(\dist^{q}(x,Q_1)+\dist^{q}(x,Q_2)\right)
$$
on compact sets, for some powers $q \geq 0$. For semi-definite programming, the singularity degree precisely dictates the H\"older exponent $q$.

\begin{theorem}[{H\"olderian
	error bounds from the singularity degree}]\label{thm:Holder}
Consider a feasible primal \SDP problem \Pprobp and define the affine space 
$$\mathcal{V}:=\{X:A(X)=b\}.$$ Set $d:=\sing\Pprob$.
Then for any compact set $U$, there is a real $c> 0$ so that  
$$\dist_{\mathcal{F}_p}(X)\leq
 c\left(\dist^{2^{-d}}(X,\Snp)+\dist^{2^{-d}}(X,\mathcal{V})\right),
\quad \textrm{ for all }x\in U.$$	
\end{theorem}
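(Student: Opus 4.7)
The plan is to induct on the singularity degree $d=\sing\Pprob$. For the base case $d=0$, Slater's condition holds, so one invokes a classical Lipschitzian (linear) error bound for the intersection of a closed convex cone with an affine subspace when the interior of the cone meets the subspace. This gives the exponent $2^0 = 1$, and the constant $c$ can be taken uniformly on any compact set $U$ using standard openness/Robinson regularity (and ultimately the condition number discussed in Chapter \ref{chap:virtuesstrfeas}).

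For the inductive step, I would carry out one step of facial reduction. Since $d\geq 1$, there exists a certificate $y$ satisfying the auxiliary system \eqref{eq:auxsysprim_feas}; the exposing matrix $M:=A^*y\succeq 0$ yields the proper face $\K' = M^{\perp}\cap \Snp\unlhd \Snp$, and the reduced problem (same $\mathcal V$, cone $\K'$) has singularity degree at most $d-1$. By the inductive hypothesis applied in the ambient space $\spann \K'$, there is $c_{d-1}>0$ so that for $X$ in any fixed compact set,
\[
\dist(X,\mathcal F_p)\;\le\;c_{d-1}\bigl(\dist^{2^{-(d-1)}}(X,\K')+\dist^{2^{-(d-1)}}(X,\mathcal V)\bigr).
\]
(Here one absorbs the projection onto $\spann \K'$ using the fact that this projection is nonexpansive and that $X^{\perp}$-components are controlled by $\dist(X,\K')$ plus a fixed factor of $\dist(X,\mathcal V)$ under $M$-boundedness.)

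The crux, and in my view the main obstacle, is establishing the \emph{square-root} estimate
\[
\dist(X,\K')\;\le\;C_U\sqrt{\dist(X,\Snp)+\dist(X,\mathcal V)},
\qquad X\in U,
\]
because this is where each facial-reduction iteration doubles the exponent, producing $2^{-d}$ after $d$ steps. This is a Sturm-type bound for PSD matrices: if $X\succeq 0$ and $M\succeq 0$, then $\dist(X,M^{\perp}\cap \Snp) \le C\sqrt{\langle M,X\rangle}$ on bounded sets, which one proves by spectral arguments using the block form of $X$ relative to the kernel/range decomposition of $M$ and the inequality $\|B\|\le \sqrt{\|B^*B\|}\le \sqrt{\|X\|\cdot\trace(A)}$ for the off-diagonal block (where $A$ is the block on the range of $M$, whose trace is controlled by $\langle M,X\rangle$). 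For general $X$ (not necessarily in $\Snp$), I would first project onto $\Snp$, at cost $\dist(X,\Snp)$, and then bound $\langle M,P_{\Snp}(X)\rangle = \langle y,A(P_{\Snp}(X))-b\rangle$ by $\|y\|(\|A\|\dist(X,\Snp)+\dist(A X,b))\le C'(\dist(X,\Snp)+\dist(X,\mathcal V))$, using $\langle y,b\rangle=0$.

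Composing the inductive bound with the square-root estimate gives
\[
\dist(X,\mathcal F_p)\;\le\;c_{d-1}\bigl(C_U^{2^{-(d-1)}}r(X)^{2^{-d}}+r(X)^{2^{-(d-1)}}\bigr),
\]
with $r(X):=\dist(X,\Snp)+\dist(X,\mathcal V)$. On a compact set $U$ the quantity $r(X)$ is bounded, so $r(X)^{2^{-(d-1)}}=(r(X)^{2^{-d}})^{2}$ is dominated by a constant multiple of $r(X)^{2^{-d}}$; absorbing constants gives the desired estimate. Finally, I would note that the resulting constant depends on $U$, on the FR certificates chosen in each step, and on the Sturm constants $C_U$, all of which are finite for compact $U$. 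The delicate points to get right are (i) the Sturm square-root bound with a uniform constant over a compact set, and (ii) handling the change of ambient space when reducing from $\Sn$ to $\spann \K'$ without losing control of the relevant distances.
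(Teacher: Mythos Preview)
The paper does not supply its own proof of Theorem~\ref{thm:Holder}; it states the result and, in the Commentary to Chapter~\ref{chap:FR}, attributes it to Sturm~\cite[Sect.~4]{S98lmi}. So there is no in-text argument to compare against.

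That said, your outline is precisely Sturm's strategy: induct on the length of a facial-reduction chain, use a linear (Hoffman/Robinson) error bound at the terminal face where relative Slater holds, and at each intermediate step invoke the PSD square-root estimate $\dist(X,\K')\le C\sqrt{\langle M,X\rangle}$ for $X\succeq 0$, $M\succeq 0$, with $\langle M,X\rangle$ controlled by $\dist(X,\Snp)+\dist(X,\mathcal V)$ via the certificate relation $\langle b,y\rangle=0$. Your sketch of the off-diagonal block bound and the absorption of lower-order terms on compact sets is correct in spirit.

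The two points you flag as delicate are exactly the right ones. For (ii), the cleanest bookkeeping is to avoid changing ambient space altogether: run the induction on the chain of faces $\Snp=\K_0\supsetneq\K_1\supsetneq\cdots\supsetneq\K_d$ inside $\Sn$, prove $\dist(X,\K_{i+1})\le C_i\sqrt{\dist(X,\K_i)+\dist(X,\mathcal V)}$ at each step, and use the relative-interior version of the Lipschitz bound at $\K_d$ (since $\mathcal F_p$ meets $\ri\K_d$). This sidesteps projecting onto $\spann\K'$ and the attendant cross-terms. For (i), the constant in the square-root bound depends on $\lambda_{\min}^{+}(M)$ and on $\sup_{X\in U}\|X\|$, both finite on compact $U$; making that dependence explicit is the only real work. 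With those two details filled in, your argument is complete and matches the source Sturm proof the paper cites.
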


What is remarkable about this result is that neither the dimension of the matrices $n$, the number of inequalities $m$, nor the rank of the matrices in the region $\mathcal{F}_p$ determines the error bound. Instead, it is only the single quantity, the singularity degree, that drives this regularity concept.

\begin{example}[Worst-case example]\label{exa:worst}
Consider the \SDP feasible region
$$\mathcal{F}_p:=\{X\in {\bf S}^n_+: X_{22}=0\textrm{ and }X_{k+1,k+1}=X_{1,k} \textrm{ for } k=2,3,\ldots,n-1\}.$$
For any feasible $X$, the constraint $X_{22}=0$ forces $0=X_{12}=X_{33}$. By an inductive argument, then we deduce $X_{1k}=0$ and $X_{k,k}=0$ for all $k=2,\ldots,n$. Thus the feasible region $\mathcal{F}_p$ coincides with the ray $\cone (e_1e_1^T)$.

 Given $\epsilon>0$, define the matrix
$$X(\epsilon)=\begin{bmatrix}
n & \epsilon^{2^{-1}} & \epsilon^{2^{-2}} &\hdots& \epsilon^{2^{-(n-1)}}\\
\epsilon^{2^{-1}} & \epsilon & 0 &\hdots& 0\\
\epsilon^{2^{-2}} & 0 &\epsilon^{2^{-1}} & & \\
\vdots & \vdots & & \ddots & \\
\epsilon^{2^{n-1}} & 0 & & & \epsilon^{2^{-(n-2)}}
\end{bmatrix}$$
Observe that $X(\epsilon)$ violates the linear constraints only in the 
requirement $X_{22}=0$. Consequently, the distance of $X(\epsilon)$ to the linear space $\mathcal{V}:=\{X\in {\bf S}^n: A(X)=b\}$
is on the order of $\epsilon$. On the other hand, the distance of $X(\epsilon)$ to the solution set is at least on the order of $\epsilon^{2^{-(n-1)}}$. This example shows that the H\"older exponent in this case is at least $2^{-(n-1)}$. Combined with Theorem~\ref {thm:Holder} and the fact that the feasible region contains rank one matrices, we deduce $\sing\Pprob= n-1$ and the H\"older exponent guaranteed by the theorem is sharp.
\end{example}

\section{Towards computation}
The Facial Reduction procedure is conceptual. To implement it, since the error compounds along the iterations, one must be able to either solve the auxiliary systems \eqref{eq:auxsysprim_feas} (resp. \eqref{eq:auxsysprim_feas_dual}) to machine precision in each iteration or certify that the systems are inconsistent. On the other hand, in general, there is no reason to believe that solving a single auxiliary system is any easier than solving the original problem \Pprob. 

One computational approach for facial reduction in \SDPp, explored by Permenter-Parrilo~\cite{perm}, is to {\em relax} the auxiliary problems to ones that are solvable. Instead of considering  \eqref{eq:auxsysprim_feas}, one can choose a convex cone $\widehat\K\subseteq {\bf S}^n_+$ so that
consistency of the system can be checked:
\begin{equation*}
0\neq A^*y \in \widehat \K \qquad\textrm{and}\qquad \langle b,y\rangle= 0.
\end{equation*}
If a vector $y$ satisfying the system is found, then one can perform one
step of facial reduction. If not, the scheme quits, possibly without
having successfully deduced that Slater's condition holds. Simple examples 
of $\widehat\K$ are the sets
$\Diag(\R^n_+)$ and the cone dual to 
$\{X \in \Sn: \textrm{every }2\times 2 \textrm{ minor of } X  
\textrm{is \PSDp}\}$, where \PSD denote positive semi-definite. 
\index{\PSDp, positive semi-definite} 
\index{positive semi-definite, \PSDp} 
The above feasibility problem is then an instance of linear programming in the first case and of second-order  cone programming in the second. More details are provided in~\cite{perm}. Readers may be skeptical of this strategy since this technique will work only for special types of degeneracy. For example, the first relaxation $\Diag(\R^n_+)$ can only detect that some diagonal elements of $X$ are identically zero on the feasible region $\mathcal{F}_p$. On the other hand, it does appear that degeneracy typically arising in applications {\em is} highly structured, and promising numerical results has been reported in~\cite{perm}.

There exist other influential techniques for regularizing conic
optimization problems that are different from the facial reduction
procedure. Two notable examples are Ramana's extended dual~\cite{Ram:93}
and the homogeneous self-dual embedding 
e.g.,~\cite{int:deklerk7,permfribergandersen}. The latter, in particular, 
is used by MOSEK~\cite{MR1748773} and SeDuMi~\cite{MR1778433}. A dual
approach, called the \textdef{conic expansion approach} is discussed 
at length in~\cite{MR3063940}, see 
also~\cite{GPatakiLiu:15,P00,LuoStZh:97,Sturmphd97}

We do not discuss these techniques here. Instead, we focus on the most promising class of conic optimization problems -- those having singularity degree at most one.  In the rest of the manuscript, we provide a series of influential examples, where the structure of the problem enables one to obtain  feasible points of the auxiliary systems without having to invoke any solvers. Numerical illustrations show that the resulting reduced subproblems are often much smaller and more stable than the original.

\section{Commentary}
Preprocessing is essential in making \LP algorithms efficient. A main
ingredient is identifying primal and/or dual slack variables that are
identically zero on the feasible set. This is equivalent to facial
reduction that reduces the problem to faces of the nonnegative orthant,
e.g.,~\cite{Huang04preprocessingand}
The facial reduction procedure for
general conic optimization started in~\cite{bw1,bw2,bw3}. The procedure provides a primal-dual pair of conic optimization problems that are \emph{proper} in the
sense that the dual of the dual yields the primal. Example~\ref{exa:basic_fail_sing} and extensions can be found in Tun{\c{c}}el~\cite[page 43]{MR2724357}. The notation of singularity degree and its connection to error bounds (Theorem~\ref{thm:Holder}) was discovered by Sturm \cite[Sect. 4]{S98lmi}; Example~\ref{exa:worst} appears in this paper as well.
Example~\ref{exa:lin_im_not_exp} is motivated by Example 1 in \cite{Pnice}. Theorem~\ref{thm:face_red}  appeared in \cite{DrPaWo:14}.
As mentioned previously, there are many approaches to
``regularization'' of conic optimization problem, aside from facial reduction, including the self-dual embedding and the approximation approaches in
\cite{perm,permfribergandersen,2016arXiv160802090P}.
An alternate view of obtaining a dual without 
a constraint qualification was given in~\cite{Ram:93,Ram:95}, though  
a relationship to facial reduction was later explained in~\cite{RaTuWo:95}.

\part{Applications and illustrations}
\label{part:applic}

In this chapter, we discuss a number of diverse and important computational 
problems. This includes various matrix completion problems and discrete optimization problems such as the quadratic assignment problem, graph partitioning,  and the strengthened relaxation of the maximum cut problem. 
In the final section, we also discuss sum of squares relaxations for polynomial optimization problems. In each case, 
we use the structure of the problem to determine a face of the 
positive semi-definite cone containing the entire feasible region. One exception is  
the matrix completion problem, where we instead determine a face
containing the \textdef{optimal face}, as opposed to the entire feasible region.
Numerical illustrations illustrate the efficacy of the approach.

\chapter{Matrix completions}
\label{chap:matrixcompl}
We begin with various matrix completion problems. Broadly speaking, the goal is to 
complete a partially specified matrix, while taking into account a priori known structural properties such as a given
rank or sparsity. There is a great variety of references on matrix completion problems; see for example \cite{MR0247866,Floudas:2001aa,MR1607310,MR1059486}.

\section{Positive semi-definite matrix completion}
We begin with a classical problem of completing a \PSD matrix from 
partially observed entries. To model this problem,  consider an 
undirected graph $G=(V,E)$ with a vertex set $V=\{1,\ldots,n\}$ and 
\textdef{graph}
edge set $E\subseteq\{ij: 1\leq i\leq j\leq n\}$. 
The symbols $ij$ and $ji$ always refer to the same edge.
 Notice we allow self-loops $ii\in E$. For simplicity, we in fact 
assume that $E$ contains $ii$ for each node $i\in V$. Elements $\omega$ of $\R^{E}$ are called {\em partial matrices}, as they specify entries of a partially observed symmetric $n\times n$ matrix.
Given  a partial matrix $\omega\in \R^{E}$, the \textdef{\PSD completion problem}
asks to determine, if possible, a matrix $X$ in the set 
$$\textdef{$\mathcal{F}_p$}:=\{X\in {\bf S}^n_+: X_{ij}=\omega_{ij}\quad \textrm{for all indices }ij\in E\}.$$
That is, we seek to complete the partial matrix $\omega$ to an $n\times
n$ positive semi-definite matrix. When do such \PSD completions exist,
that is, when is $\mathcal{F}_p$ nonempty? Clearly, a necessary
condition is that $\omega$ is a {\em partial \PSD matrix}, meaning that
its restriction to any specified principal submatrix is \PSDp. This condition, however, is not always sufficient. 

A  graph $G=(V,E)$ is called {\em \PSD completable} if every partial
\PSD matrix $\omega\in \R^E$ is completable to a \PSD matrix. It turns
out that \PSD completable graphs are precisely those that are chordal.
Recall that a graph $G$ is called \textdef{chordal} if any cycle of four or more nodes has a chord -- an edge joining any two nodes that are not adjacent in the cycle.

\begin{theorem}[\PSD completable and chordal graphs]\label{thm:partial_comp}
The graph $G$ is \PSD completable if, and only if, $G$ is
chordal.\footnote{If all the self-loops are not included
in $E$, one needs to add that the two subgraphs with self-loops and
without are disconnected from each other, see e.g.,~\cite{DrPaWo:14}.}
\end{theorem}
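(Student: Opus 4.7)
The plan is to prove the equivalence in two separate directions, following the classical structural argument due to Grone--Johnson--S\'a--Wolkowicz.

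\textbf{Direction ($\Leftarrow$): chordal $\Rightarrow$ \PSDp completable.} I would proceed by induction on $|V|=n$. The base case $n=1$ is trivial. For the inductive step, I would invoke the standard fact that every chordal graph admits a \emph{simplicial vertex}, i.e., a vertex $v$ whose neighborhood $N(v)$ induces a clique (this follows from the existence of a perfect elimination ordering). Let $\omega \in \R^E$ be a given partial \PSD matrix on $G$. The restriction of $\omega$ to the clique $N(v)\cup\{v\}$ is fully specified and \PSDp, so write it in block form
\[
	M \;=\; \begin{pmatrix} A & b \\ b^T & c \end{pmatrix} \;\succeq\; 0,
\]
with $A$ indexed by $N(v)$. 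Remove $v$ from $G$: the induced subgraph $G-v$ is still chordal, and the restricted partial matrix is still partial \PSDp, so the inductive hypothesis yields a \PSD completion
\[
	\widetilde X \;=\; \begin{pmatrix} A & B \\ B^T & D \end{pmatrix} \;\succeq\; 0,
\]
where the first block row/column is indexed by $N(v)$. The task is now to build one extra row/column (for $v$) with prescribed entries $b$ on $N(v)$ and $c$ on the diagonal, while leaving the entries $d \in \R^{|V|-1-|N(v)|}$ to the vertices outside $N(v)\cup\{v\}$ free. I would choose $d := B^T A^+ b$, where $A^+$ is the Moore--Penrose pseudoinverse (with $b \in \range A$ since $M \succeq 0$). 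Using the Schur complement characterization of positive semi-definiteness and computing the Schur complement of the leading $A$ block of the extended matrix, the remainder reduces, after a short linear-algebra calculation, to
\[
	\begin{pmatrix} D - B^T A^+ B & 0 \\ 0 & c - b^T A^+ b \end{pmatrix} \;\succeq\; 0,
\]
where both diagonal blocks are \PSDp by PSD-ness of $\widetilde X$ and $M$ respectively. This closes the induction.

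\textbf{Direction ($\Rightarrow$): \PSDp completable $\Rightarrow$ chordal.} I argue by contrapositive. Suppose $G$ contains an induced cycle $v_1, v_2,\ldots, v_k, v_1$ of length $k\geq 4$ (no chord). I construct an explicit partial \PSD matrix $\omega$ that is not completable. Set $\omega_{ii}=1$ for all $i$; on the cycle edges set $\omega_{v_i v_{i+1}}=1$ for $i=1,\ldots,k-1$ and $\omega_{v_k v_1}=-1$; for all other specified edges (i.e., those not on the cycle plus self-loops) set consistent \PSD values (e.g., duplicate the cycle's rank-$1$ Gram structure on any attached trees). Every specified principal submatrix associated with an edge is a $2\times 2$ matrix of the form $\bigl(\begin{smallmatrix} 1 & \pm 1 \\ \pm 1 & 1 \end{smallmatrix}\bigr)$, which is \PSDp, so $\omega$ is indeed partial \PSDp. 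Now suppose for contradiction that $X \succeq 0$ extends $\omega$; write $X = V^T V$ for unit vectors $v_1,\ldots,v_k$ (the columns have unit norm since the diagonal entries are $1$). The Cauchy--Schwarz equality case applied to $\langle v_i, v_{i+1}\rangle =1$ forces $v_1 = v_2 = \cdots = v_k$, contradicting $\langle v_1, v_k\rangle = -1$. Hence $G$ is not \PSDp completable.

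\textbf{Main obstacle.} The genuinely delicate step is the Schur-complement extension in the forward direction: one must verify that the choice $d = B^T A^+ b$ is well defined (which hinges on $b \in \range A$, a direct consequence of $M\succeq 0$) and that the resulting Schur complement factors into a block-diagonal \PSD matrix. The fact that $N(v)$ is a clique is precisely what guarantees $A$, $b$, and $c$ are all prescribed and consistent; without chordality this step fails, which is exactly why the non-chordal case produces the obstruction in the reverse direction.
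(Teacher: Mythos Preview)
The paper does not supply its own proof of this theorem: it is stated as a classical result and attributed in the commentary to Grone--Johnson--S\'a--Wolkowicz~\cite{GrJoSaWo:84}. So there is nothing in the paper to compare against; what follows is an assessment of your argument on its own terms.

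Your forward direction (chordal $\Rightarrow$ completable) is the standard simplicial-vertex induction and is correct. The range condition $b\in\Range A$ (and likewise the columns of $B$) follows from $M\succeq 0$ and $\widetilde X\succeq 0$, so the generalized Schur complement applies, and your choice $d=B^TA^+b$ kills the off-diagonal block exactly as you say.

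Your reverse direction has the right obstruction but a genuine gap in the verification that $\omega$ is a \emph{partial} \PSD matrix. Being partial \PSD means every fully specified principal submatrix is \PSDp, and the fully specified principal submatrices are exactly those indexed by \emph{cliques} of $G$ --- not just edges. Your sentence ``Every specified principal submatrix associated with an edge is a $2\times 2$ matrix\ldots'' checks only the $2$-cliques. Since the chordless cycle is induced, any clique of $G$ meets it in at most one edge, so the fix is easy: specify $\omega$ to be $0$ on every edge not on the cycle and $1$ on every diagonal; then any clique $K$ gives a principal submatrix that is block-diagonal with one block equal to $\bigl(\begin{smallmatrix}1&\pm1\\ \pm1&1\end{smallmatrix}\bigr)$ (if $K$ contains a cycle edge) and the rest equal to the identity. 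Your phrase ``duplicate the cycle's rank-$1$ Gram structure on any attached trees'' does not accomplish this and is too vague to be checked. Once the construction is made precise in this way, your Cauchy--Schwarz/Gram argument for non-completability goes through unchanged.
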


Chordal graphs, or equivalently those that are \PSD completable, play a
fundamental role for the \PSD completion problem. For example, on such graphs, the completion problem admits an efficient combinatorial algorithm~\cite{GrJoSaWo:84,Laurent:00,poly_solve_chord}.

Next, we turn to Slater's condition. Consider the completion problem
\begin{equation}\label{eqn:compl_exa}
\begin{bmatrix}
1 & 1 & ? & ? \\
1 & 1 & 1 & ? \\
? & 1 & 1 & -1 \\
? & ? & -1 & 2 \\
\end{bmatrix}.
\end{equation}
The question marks ? denote the unknown entries. The underlying graph on
four nodes is a path and is therefore chordal. The known entries make up
a partial \PSD matrix since the three specified principal minors are
\PSDp.
Thus by Theorem~\ref{thm:partial_comp}, the completion problem is
solvable. Does Slater's condition hold? The answer is no. The first
leading principal minor is singular, and therefore any \PSD completion must be singular. 

By the same logic, any singular specified principal minor of a partial
matrix $\omega\in \R^E$ certifies that strict feasibility fails. Much
more is true, however. We now show how any singular specified principal minor of a partial matrix $\omega\in \R^E$ yields a face of ${\bf S}^n_+$ containing the entire feasible region, allowing one to reduce the dimension of the problem.

\index{coordinate projection, $\mathcal{P}_{E}$}
\index{$\mathcal{P}_{E}$, coordinate projection}
To see how this can be done, let us introduce some notation.
Define the {\em coordinate projection} map
 $\mathcal{P}_{E}\colon{\bf S}^n\to \R^{E}$ by setting
$$\mathcal{P}_{E}(X)=(X_{ij})_{ij\in E}.$$
In this notation, we can write
$$\mathcal{F}_p:=\{X\in {\bf S}^{n}_+: \PP_{E}(X)=\omega\}.$$
We will now see how the geometry of the image set $\PP_{E}({\bf S}^n_+)$, along with Theorem~\ref{thm:face_red}, helps us discover a face of ${\bf S}^n_+$ containing the feasible region. We note in passing that the image $\PP_{E}({\bf S}^n_+)$ is always closed.\footnote{If some elements $ii$ do not lie in $E$, contrary to our simplifying assumption, then the image $\PP_{E}({\bf S}^n_+)$ can fail to be closed. A precise characterization is given in~\cite{DrPaWo:14}.}

\begin{proposition}[Closure of the image]\label{prop:psd_im_closed}
The image $\PP_{E}({\bf S}^n_+)$ is closed.
\end{proposition}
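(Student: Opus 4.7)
The plan is to use the standard recipe for showing that an image of a closed set under a linear map is closed: take a sequence in the image, promote it (or a subsequence of preimages) to a convergent sequence of arguments, and pass to the limit using closedness of the domain. The key structural ingredient will be that our simplifying assumption forces every self-loop $ii$ to lie in $E$, so the projection $\PP_E$ records every diagonal entry of $X$; combined with positive semi-definiteness, this gives a priori bounds on all entries of the preimage.

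Concretely, I would proceed as follows. Fix a sequence $\omega^{(k)} \in \PP_E({\bf S}^n_+)$ with $\omega^{(k)}\to \omega^\star$ in $\R^E$, and choose $X^{(k)}\in {\bf S}^n_+$ with $\PP_E(X^{(k)})=\omega^{(k)}$. Because $ii\in E$ for every $i\in V$, the diagonal entries satisfy $X^{(k)}_{ii} = \omega^{(k)}_{ii}\to \omega^\star_{ii}$ and are therefore bounded. Since each $X^{(k)}$ is positive semi-definite, every $2\times 2$ principal minor is nonnegative, yielding
\[
|X^{(k)}_{ij}|\le \sqrt{X^{(k)}_{ii}\, X^{(k)}_{jj}} \quad \text{for all } i,j.
\]
Thus the whole sequence $\{X^{(k)}\}$ is bounded in $\Sn$.

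By the Bolzano--Weierstrass theorem, there is a subsequence $X^{(k_\ell)}\to X^\star$ for some $X^\star\in \Sn$. Closedness of ${\bf S}^n_+$ (established earlier in Example~\ref{exa:sdpp}) gives $X^\star\succeq 0$, while continuity of $\PP_E$ gives $\PP_E(X^\star) = \lim_\ell \PP_E(X^{(k_\ell)}) = \omega^\star$. Hence $\omega^\star\in \PP_E({\bf S}^n_+)$, proving closedness.

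I do not expect any serious obstacle: the argument is a one-shot compactness argument. The only subtle point is recognizing why the assumption $ii\in E$ for all $i$ is essential---without it, the diagonal entries could blow up while the projected entries stay bounded (as in the earlier example where $A(X)=(2X_{12},X_{22})$ has a non-closed image precisely because $X_{11}$ is unconstrained). The footnote in the proposition hints at the sharper characterization needed when some diagonal indices are omitted, which I would not attempt to reprove here.
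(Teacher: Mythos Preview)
Your argument is correct. The paper itself does not supply a proof of this proposition; it is stated without proof, with the footnote pointing to~\cite{DrPaWo:14} for the sharper characterization when some self-loops are absent. Your compactness argument---bounding the diagonal via the assumption $ii\in E$ for all $i$, then bounding the off-diagonal via the $2\times 2$ principal minors, and extracting a convergent subsequence---is the standard and expected proof, and your remark about why the self-loop assumption is essential is exactly the right observation.
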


The reader can check that the adjoint $\mathcal{P}_{E}^*\colon\R^{E}\to {\bf S}^n$ simply pads partial matrices in $\R^E$ with zeros:
\begin{equation}
\left(\PP_E^*(y)\right)_{ij}= \left\{
\begin{array}{ll}
   y_{ij}, & \text{if } ij \in E \text{ or } ji \in E\\
   0, & \text{otherwise}.
\end{array}
  \right.
\end{equation}
  For any subset of vertices $\alpha\subseteq V$, we let 
\index{$E[\alpha]:=\{ij\in E: i,j\in \alpha\}$}
$E[\alpha]:=\{ij\in E: i,j\in \alpha\}$ be the edge set induced by $G$
on $\alpha$ and we set \textdef{$\omega[\alpha]$} to be the restriction of $\omega$ to $E[\alpha]$.
Define the {\em relaxed region}
\begin{equation}\label{eqn:feas_restric}
\textdef{$\mathcal{F}_p(\alpha)$}:=\{X\in {\bf S}^{n}_+: P_{E[\alpha]}(X)=\omega[\alpha]\}.
\end{equation}
Clearly $\alpha \subseteq V$ means we have fewer constraints and
\[
\mathcal{F}_p= \mathcal{F}_{p}(V) \subseteq \mathcal{F}_{p}(\alpha),
\qquad \textrm{ for all }\alpha \subseteq V.
\]

A subset $\alpha\subseteq V$ is called a 
\textdef{clique} if for any two nodes $i,j\in\alpha$ the edge $ij$ lies in $E$. 
Specified principal minors of $\omega$ correspond precisely to cliques
in the graph $G$. We can moreover clearly identify
\textdef{$\R^{E[\alpha]}$} with
the matrix space ${\bf S}^k$. Suppose now that $\omega[\alpha]$ has
rank $r<k$, i.e.,~the principal submatrix of $\omega$ indexed by
$\alpha$ is singular. Then the right-hand-side $\omega[\alpha]$ of
\eqref{eqn:feas_restric} lies in the boundary of the image set
$\PP_{E[\alpha]}({\bf S}^n_+)\subseteq {\bf S}^k_+$. Let $V_{\alpha}$ be an exposing vector of  $\face(\omega[\alpha],{\bf S}^k_+)$. Then by Theorem~\ref{thm:face_red}, we can be sure that $W_{\alpha}:=\PP_{\alpha}^*(V_{\alpha})$ exposes the minimal face of ${\bf S}^n_+$ containing the entire region $\mathcal{F}_{p}(\alpha)$. Given a collection of  cliques $\alpha_1,\alpha_2, \ldots,\alpha_l$, we can perform the same procedure and deduce 
that the entire feasible region  $\mathcal{F}_{p}$ lies in the face 
$$(W_{\alpha_1}^{\perp}\cap {\bf S}^n_+)\cap  (W_{\alpha_2}^{\perp}\cap {\bf S}^n_+)\cap \ldots \cap (W_{\alpha_l}^{\perp}\cap {\bf S}^n_+),$$
which by Proposition~\ref{prop:inter_face_exp} admits the equivalent description
$$(W_{\alpha_1}+W_{\alpha_2}+\ldots+W_{\alpha_l})^{\perp}\cap {\bf S}^n_+.$$
The following example will clarify the strategy.

\begin{example}[Reducing the \PSD completion problem]\label{exa:exam_psd comple}
Let $\FF_p$ consist of all matrices $X\in{\bf S}^4_+$ solving the \PSD completion problem \eqref{eqn:compl_exa}.
There are three nontrivial cliques in the graph, all of size $2$. 
The minimal face of ${\bf S}^2_+$ containing the matrix 
$$\begin{bmatrix}
1 & 1\\
1 & 1\\
\end{bmatrix}
= \begin{bmatrix}
-\frac{1}{2} & \frac{1}{2}\\
\frac{1}{2} & \frac{1}{2}\\
\end{bmatrix} \begin{bmatrix}
0 & 0\\
0 & 4\\
\end{bmatrix} \begin{bmatrix}
-\frac{1}{2} & \frac{1}{2}\\
\frac{1}{2} & \frac{1}{2}\\
\end{bmatrix} 
$$
is exposed by  
$$\begin{bmatrix}
-\frac{1}{2} & \frac{1}{2}\\
\frac{1}{2} & \frac{1}{2}\\
\end{bmatrix} \begin{bmatrix}
4 & 0\\
0 & 0\\
\end{bmatrix} \begin{bmatrix}
-\frac{1}{2} & \frac{1}{2}\\
\frac{1}{2} & \frac{1}{2}\\
\end{bmatrix} 
= \begin{bmatrix}
1 & -1\\
-1 & 1\\
\end{bmatrix}
.$$
Moreover, the matrix $\begin{bmatrix}
	1 & -1\\
	-1 & \,\,\,2\\
\end{bmatrix}$
is definite and hence the minimal face of ${\bf S}^2_+$ containing this matrix is exposed by the all-zero matrix.

The intersection of exposed faces is exposed by the sum of
their exposing vectors. We deduce that $\FF_p$ is contained in the face of ${\bf S}^4_+$ exposed by the sum
$$
\begin{bmatrix}
1 & -1 & 0 &0\\
-1 & 1 & 0 &0\\
0 & 0 & 0 &0\\
0 & 0 & 0 &0\\
\end{bmatrix}
+\begin{bmatrix}
0 & 0 & 0 &0\\
0 & 1 & -1 & 0 \\
0 & -1 & 1 & 0 \\
0 & 0 & 0 &0\\
\end{bmatrix}=\begin{bmatrix}
1 & -1 & 0 & 0 \\
-1 & 2 & -1 & 0 \\
0 & -1 & 1 &0\\
0 & 0 & 0 &0\\
\end{bmatrix}.
$$
After finding the nullspace of this matrix, we deduce 
$$\FF_p\subseteq \begin{bmatrix}
0 & 1  \\
0 & 1  \\
0 & 1  \\
1 & 0  \\
\end{bmatrix} 
{\bf S}^2_+
\begin{bmatrix}
0 & 1 \\
0 & 1  \\
0 & 1  \\
1 & 0  \\
\end{bmatrix}^T.
$$
\end{example}

The following lemma is another nice consequence of the procedure described in the above example.

\begin{lemma}[Completion of banded all ones matrices]\label{lem:allones}
The matrix of all ones is the unique positive semi-definite matrix satisfying $X_{ij} = 1$ for all indices with $|i-j|\leq 1$.
\end{lemma}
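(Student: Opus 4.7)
The plan is to apply the facial reduction recipe of Example~\ref{exa:exam_psd comple} to the path graph on $n$ vertices with self-loops. The specified edges are the self-loops $ii$ and the consecutive pairs $\{i,i+1\}$, so the only nontrivial cliques are $\alpha_i = \{i,i+1\}$ for $i=1,\ldots,n-1$. On each such clique, the specified principal submatrix is $\omega[\alpha_i] = \begin{bmatrix} 1 & 1 \\ 1 & 1 \end{bmatrix}$, which is singular of rank one. As computed in Example~\ref{exa:exam_psd comple}, the minimal face of ${\bf S}^2_+$ containing this matrix is exposed by $V_i = \begin{bmatrix} 1 & -1 \\ -1 & 1 \end{bmatrix}$.

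Next I would pull each $V_i$ back through the adjoint $\PP_{E[\alpha_i]}^*$ to obtain an $n\times n$ matrix $W_i \in {\bf S}^n_+$ that is zero outside the $(i,i+1)$ block and equals $V_i$ on that block. By Proposition~\ref{prop:inter_face_exp} combined with Theorem~\ref{thm:face_red}, the feasible region $\mathcal{F}_p$ lies in the face of ${\bf S}^n_+$ exposed by the sum
\[
W := \sum_{i=1}^{n-1} W_i.
\]
A direct computation shows that $W$ is exactly the combinatorial Laplacian of the path graph on $n$ vertices: $W_{ii}=2$ for $2\le i\le n-1$, $W_{11}=W_{nn}=1$, $W_{i,i+1}=W_{i+1,i}=-1$, and all other entries zero.

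The main step is then to identify the null space of $W$. Since the path is connected, the Laplacian $W$ has rank $n-1$ and $\ker W = \operatorname{span}(e)$, where $e=(1,\ldots,1)^T$. Any $X \in \mathcal{F}_p$ satisfies $\range X \subseteq \ker W$ by the characterization of faces of ${\bf S}^n_+$ in Example~\ref{exa:face_snp}, so $X = \alpha\, ee^T$ for some $\alpha \ge 0$. The diagonal constraint $X_{11}=1$ forces $\alpha=1$, yielding $X = ee^T$, the all-ones matrix. Conversely $ee^T$ clearly satisfies all the specified entries and is positive semi-definite, establishing existence and uniqueness.

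The only mildly delicate point is the verification that $\sum_i W_i$ really is the path Laplacian and that its null space is exactly $\operatorname{span}(e)$; both are elementary but are the heart of why the argument succeeds in one step of facial reduction. No further iteration is needed, since after one reduction the feasible region has collapsed to a single point.
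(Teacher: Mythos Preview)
Your proof is correct and follows precisely the approach the paper intends: the paper's own argument is a one-line appeal to ``the same logic as in Example~\ref{exa:exam_psd comple},'' and you have spelled out that logic in full, including the nice observation that the sum of clique exposing vectors is exactly the path Laplacian with one-dimensional kernel $\operatorname{span}(e)$. (Incidentally, the paper says there are $n$ specified $2\times 2$ principal submatrices; your count of $n-1$ is the correct one.)
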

\begin{proof}
Consider the edge set $E=\{ij: |i-j|\leq 1\}$ and let $\omega\in \R^{E}$
	be a partial matrix of all ones. Observe $\omega$ has $n$
	specified $2\times 2$-principal submatrices, each having rank 1. By the same logic as in Example~\ref{exa:exam_psd comple}, it follows that the feasible region $\FF_p$ is zero-dimensional, as claimed. 
\end{proof}

The strategy outlined above suggests an algorithm for finding the minimal
face based on exploiting cliques in the graph. This strategy is well-founded at least for chordal graphs.

\begin{theorem}[Finding the minimal face on chordal graphs]
	\label{thm:cliquesuffPSD}
	Suppose that $G$ is chordal and consider a
	partial \PSD matrix $\omega\in \R^{E}$.
	Then the equality $$\face(\FF_{p}, \Snp)=\bigcap_{\alpha\in \Theta} \face(\FF_p({\alpha}),\Snp)\qquad \textrm{ holds},$$
	where $\Theta$ denotes the set of all maximal cliques in  $G$.
\end{theorem}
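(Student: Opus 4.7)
The plan is to prove the two inclusions separately. The easy direction $\face(\mathcal{F}_p,\Snp) \subseteq \bigcap_{\alpha\in\Theta}\face(\mathcal{F}_p(\alpha),\Snp)$ follows immediately from the containment $\mathcal{F}_p \subseteq \mathcal{F}_p(\alpha)$ and the monotonicity of the minimal-face operation; chordality plays no role here. To set up the converse I first describe the right-hand side as a single exposed face. Fix a maximal clique $\alpha\in\Theta$, set $k:=|\alpha|$, and identify $\R^{E[\alpha]}$ with ${\bf S}^k$. Because $\alpha$ is a clique, the projection $\PP_{E[\alpha]}\colon\Snp\to{\bf S}^k_+$ is surjective (pad with zeros outside $\alpha$), and the partial \PSDp assumption places $\omega[\alpha]\in{\bf S}^k_+$. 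Facial exposedness of ${\bf S}^k_+$ (Example~\ref{exa:face_snp}) supplies an exposing vector $V_\alpha\in{\bf S}^k_+$ of $\face(\omega[\alpha],{\bf S}^k_+)$, and part~(II) of Theorem~\ref{thm:face_red} then says that $W_\alpha:=\PP_{E[\alpha]}^*(V_\alpha)$ exposes $\face(\mathcal{F}_p(\alpha),\Snp)$. Proposition~\ref{prop:inter_face_exp} collapses the intersection of exposed faces to
$$\bigcap_{\alpha\in\Theta}\face(\mathcal{F}_p(\alpha),\Snp)\;=\;W^\perp\cap\Snp\;=\;F_{\mathcal R},$$
where $W:=\sum_{\alpha\in\Theta}W_\alpha$ and $\mathcal R:=\nul W$.

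With this description in hand, the reverse inclusion reduces to producing a single completion $\bar X\in\mathcal{F}_p$ with $\range\bar X = \mathcal R$: such an $\bar X$ lies in the relative interior of $F_{\mathcal R}$ (by the face description in Example~\ref{exa:face_snp}), forcing $F_{\mathcal R} = \face(\bar X,\Snp) \subseteq \face(\mathcal{F}_p,\Snp)$ and closing the argument. To construct $\bar X$ I would induct over a clique-tree ordering $\alpha_1,\ldots,\alpha_l$ of the maximal cliques of $G$, guaranteed by chordality via the running intersection property. The inductive hypothesis at step $j$ is a PSD completion $X_j$ of the partial matrix on the induced subgraph $G[\alpha_1\cup\cdots\cup\alpha_j]$ whose restriction to each clique $\alpha_i$ with $i\le j$ lies in the relative interior of $\face(\omega[\alpha_i],{\bf S}^{|\alpha_i|}_+)$. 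To extend from $j$ to $j+1$, the running intersection property places the overlap $\beta:=\alpha_{j+1}\cap(\alpha_1\cup\cdots\cup\alpha_j)$ entirely inside some earlier clique $\alpha_i$, so the $\beta$-submatrices of $X_j$ and of $\omega[\alpha_{j+1}]$ automatically coincide; the classical chordal \PSDp-completion algorithm of \cite{GrJoSaWo:84} (a generic Schur-complement step on the unspecified rows and columns) then produces a PSD matrix $X_{j+1}$ completing $\omega[\alpha_{j+1}]$ on top of $X_j$.

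The principal obstacle is to verify that this extension genuinely preserves the relative-interior property on every clique simultaneously. Compatibility across the overlap is the crux: both the face already built on $\alpha_1\cup\cdots\cup\alpha_j$ and the target face on $\alpha_{j+1}$ restrict on $\beta$ to the same minimal face $\face(\omega[\beta],{\bf S}^{|\beta|}_+)$, which is precisely the geometric content of the running intersection property in the \PSDp-completion setting. A perturbation refinement of the Schur-complement step—adding a generic \PSDp summand supported on the newly introduced index block—then ensures that $X_{j+1}$ lies in the relative interior of the face carved out by $V_{\alpha_{j+1}}$ on $\alpha_{j+1}$ without disturbing the relative-interior property already achieved on the previous cliques. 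Closing the induction at $j=l$ delivers a completion $\bar X\in\mathcal{F}_p$ with $\range\bar X = \mathcal R$, establishing the reverse inclusion and hence the theorem.
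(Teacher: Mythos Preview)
The paper itself does not supply a proof of Theorem~\ref{thm:cliquesuffPSD}; the result is stated and attributed to~\cite{DrPaWo:14}. Your overall strategy---reduce the right-hand side to a single exposed face $F_{\mathcal R}$ via Proposition~\ref{prop:inter_face_exp}, then manufacture a completion $\bar X\in\mathcal F_p$ with $\range\bar X=\mathcal R$ by induction along a clique tree---is the right one and matches the approach in that reference.

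There is, however, a genuine gap in the induction as you have formulated it. Your inductive hypothesis is that the \emph{restriction} $X_j[\alpha_i]$ lies in $\ri\face(\omega[\alpha_i],{\bf S}^{|\alpha_i|}_+)$ for every $i\le j$. But $X_j$ is by construction a completion of the partial data, so $X_j[\alpha_i]=\omega[\alpha_i]$ exactly; the statement ``$\omega[\alpha_i]$ lies in the relative interior of its own minimal face'' is a tautology and carries no information from one step to the next. In particular, it cannot deliver the conclusion $\range\bar X=\mathcal R$ at $j=l$: \emph{every} completion in $\mathcal F_p$ satisfies your hypothesis, yet most of them do not have maximal range.

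What the induction must track is a statement about $X_j$ in the ambient cone, not about its clique restrictions. The correct hypothesis is that $X_j$ lies in the relative interior of
\[
\bigcap_{i\le j}\bigl(W_{\alpha_i}\big|_{V_j}\bigr)^{\perp}\cap{\bf S}^{V_j}_+,
\qquad V_j:=\alpha_1\cup\cdots\cup\alpha_j,
\]
equivalently $\range X_j=\mathcal R_j:=\{v\in\R^{V_j}:v[\alpha_i]\in\range\omega[\alpha_i]\ \text{for all }i\le j\}$. Maintaining this hypothesis in the extension step is exactly where the work lies: one must show that the Schur-complement completion can be chosen so that the newly adjoined columns of $B$ span all of $\mathcal R_j$ consistent with the $\beta$-block constraint, and that the resulting $X_{j+1}$ has range equal to $\mathcal R_{j+1}$. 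The running intersection property is used here to guarantee that the required column space on the overlap $\beta$ is already available inside $\range X_j$. Your remark about ``adding a generic PSD summand'' gestures at this, but without the corrected hypothesis there is nothing for that perturbation to preserve. Once the hypothesis is reformulated, the inductive step becomes a concrete linear-algebra verification, and $\range\bar X=\mathcal R$ follows immediately at $j=l$.
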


On the other hand, it is important to realize that when the graph is not
chordal, the minimal face $\face(\FF_p,{\bf S}^n_+)$ is not always
guaranteed to be found from cliques alone. The following example shows a
\PSD completion problem that fails Slater's condition but where all the faces arising from cliques are trivial.

\begin{example}[Slater condition \& nonchordal graphs, \cite{DrPaWo:14}]
\label{ex:n4complbndr}
Let $G=(V,E)$ be  the graph with $V=\{1,2,3,4\}$ and 
$E=\{12,23,34,14\}\cup\{11,22,33,44\}$.
Define the corresponding \PSD completion problems $C(\epsilon)$, 
parametrized by $\epsilon\geq 0$:
\begin{equation*}
C(\epsilon):\qquad\qquad
\begin{bmatrix}
1+\epsilon& 1 & ? & -1 \cr
1 & 1+\epsilon & 1 & ? \cr
? & 1 & 1 +\epsilon& 1 \cr
-1 & ? & 1 & 1 +\epsilon\cr
\end{bmatrix}.
\end{equation*}
Let $\omega(\epsilon)\in\R^E$ denote the corresponding partial matrices.
From Lemma \ref{lem:allones},
the \PSD completion problem $C(0)$ is infeasible, that is 
$\omega(0)$ lies outside of $\PP_{E}({\bf S}^4_+)$. On the other hand, for all sufficiently 
large $\epsilon$, the partial matrices 
$\omega(\epsilon)$ lie in $\Int \PP({\bf S}^4_+)$ by diagonal dominance. 
Since $\PP_E({\bf S}^4_+)$ is closed by Proposition~\ref{prop:psd_im_closed}, 
we deduce that there exists $\hat{\epsilon} >0$, so that $\omega(\hat{\epsilon})$ 
lies on the boundary of $\PP_{E}({\bf S}^4_+)$, that is Slater's condition 
\underline{fails} for the completion problem $C(\hat{\epsilon})$. 
In fact, it can be shown that the smallest such
$\epsilon$ is $\hat \epsilon = \sqrt 2 -1$ with completion values of $0$
in the $?$ positions in $C(\hat \epsilon)$.
On the other hand, all specified principal matrices of $\omega(\epsilon)$ for $\epsilon >0$ are clearly positive definite, and therefore all the corresponding faces  are trivial.  
Thus we have found a partial matrix $\omega(\hat \epsilon)$ that has a
singular completion but the minimal face cannot be found from an
intersection using the cliques of the graph.
\end{example}

Given the importance of singularity degree, the following question
arises naturally. Which graphs $G=(V,E)$ have the property that the cone
$\PP_{E}({\bf S}^n_+)$ is facially exposed? Equivalently, on which
graphs $G=(V,E)$ does every feasible \PSD completion problem have
singularity degree at most one? Let us make the following definition. The
\textdef{singularity degree of a graph} $G=(V,E)$ is the maximal singularity
degree among all completion problems with
\PSD completable partial matrices $\omega\in \R^E$.

Chordal graphs have singularity degree one~\cite{DrPaWo:14}, and surprisingly these are the only graphs with this property~\cite{tanigawa}.

\begin{corollary}[Singularity degree of chordal completions]
	\label{cor:sing_psd}{\hfill \\ }
	The graph $G$ has singularity degree one if, and only if, $G$ is chordal.
\end{corollary}

\section{Euclidean distance matrix completion, \EDMCp}
\index{Euclidean distance matrix completion, \EDMCp}
\index{\EDMCp, Euclidean distance matrix completion}
In this section, we discuss a problem that is closely related to the \PSD
completion problem of the previous section, namely the Euclidean
distance matrix completion, \EDMCp, problem.  As we will see, the \EDMC
problem inherently fails Slater's condition, and facial reduction once
again becomes applicable by analyzing certain cliques in a graph.

Setting the stage, fix an undirected graph $G=(V,E)$ on a vertex set $V=\{1,2,\ldots,n\}$ with an edge set $E\subseteq\{ij: 1\leq i< j\leq n\}$. Given a partial matrix $d\in \R^E$, the {\em Euclidean distance matrix completion problem} asks to determine if possible an integer $k$ and a collection of points $p_1,\ldots,p_n\in \R^k$ satisfying
$$d_{ij}=\|p_i-p_j\|^2\quad \textrm{ for all } ij\in E.$$
See figure~\ref{fig:EDMC} for an illustration.

 \begin{figure}[!ht]
 	\centering
 	\includegraphics[scale=0.65]{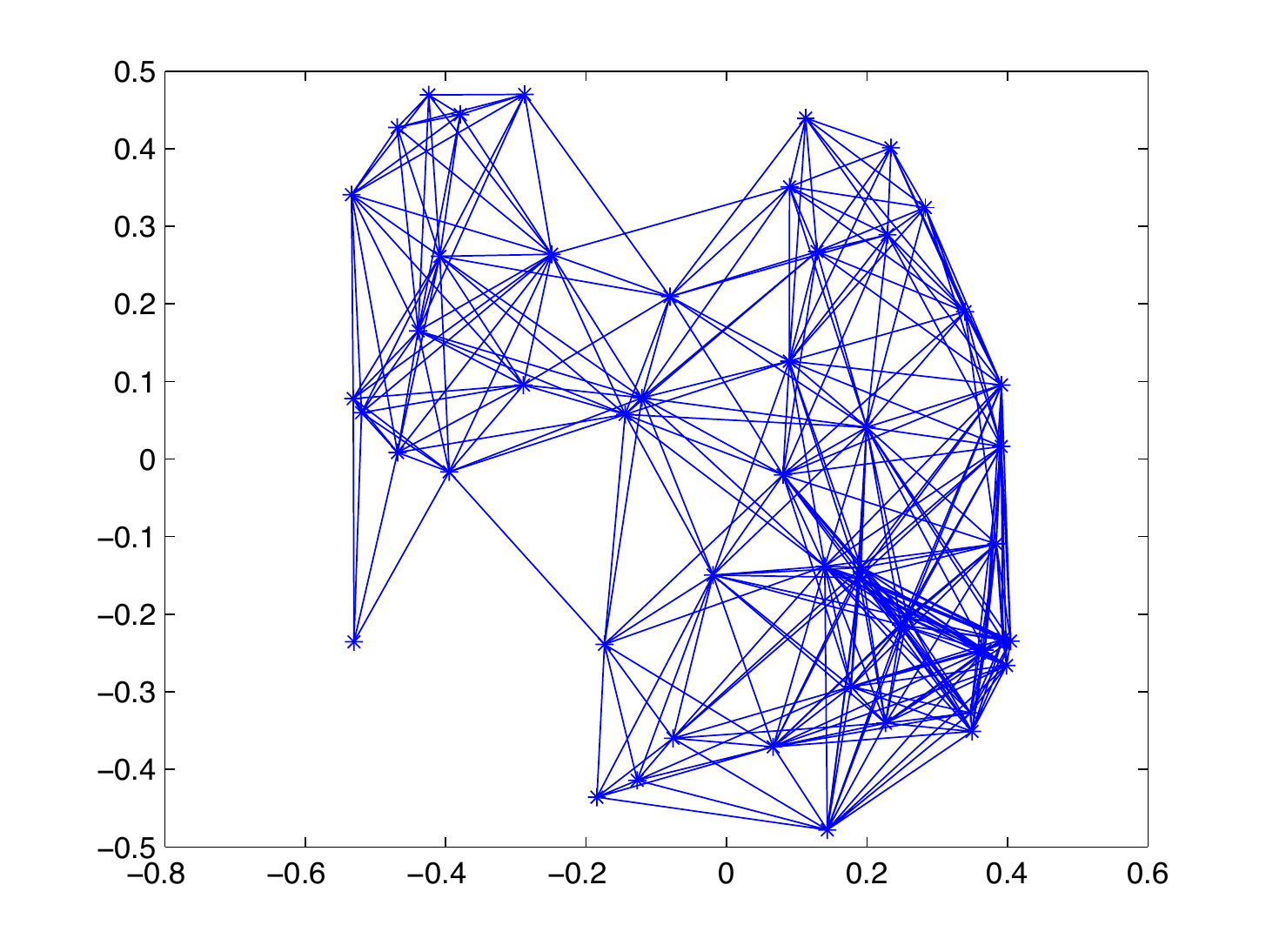}
 	\caption{Instance of EDMC}
 	\label{fig:EDMC}
 \end{figure}

We now see how this problem can be modeled as an \SDPp. To this end, 
let us introduce the following notation. A matrix $D\in {\bf S}^n$ is called a 
\textdef{Euclidean distance matrix, \EDMp}, if there exists an integer $k$ 
\index{\EDMp, Euclidean distance matrix}
and points $p_1,\ldots,p_n\in \R^k$ satisfying
$$D_{ij}=\|p_i-p_j\|^2\quad \textrm{ for all } i,j=1,\ldots,n.$$
 Such points $p_1,\ldots,p_n$ are said to {\em realize} $D$ in $\R^k$.
The smallest integer $k$ such that there exist points in $\R^k$ realizing $D$ is called the 
 \textdef{embedding dimension of $D$}, and is denoted by $\embdim D$. 
\index{$\embdim D$, embedding dimension of $D$}
We let $\En$ denote the set of all $n\times n$ \EDM matrices. In this
language the \EDM completion problem reads: given a partial matrix $d\in \R^E$ determine a matrix in the set 
$$\{D\in \En: D_{ij}=d_{ij} \quad \textrm{for all } ij\in E\}.$$
Thus the \EDM completion problem is a conic feasibility problem. Since we are interested in facial reduction, the facial structure of $\En$ is central.

Notice that $\En$ has empty interior since it is contained in the space of 
{\em hollow matrices}
\index{${\bf S}^n_H$, hollow matrices}
\index{hollow matrices, ${\bf S}^n_H$}
$${\bf S}^n_H:= \{D \in \Sn: \diag(D)=0\}.$$
A fundamental fact often used in the literature is that $\En$ is linearly isomorphic to ${\bf S}^{n-1}$.
More precisely, consider the mapping
$$\KK\colon \Sn \rightarrow \Sn$$ 
defined by  
\begin{equation}
\label{eq:defK}
\KK(X)_{ij}:=X_{ii}+X_{jj}-2X_{ij}.
\end{equation}
Clearly $\KK$ maps into the space of hollow matrices ${\bf S}^n_H$. One can quickly verify that
the adjoint is given by 
\begin{equation*}
\KK^*(D)=2(\Diag(De)-D).
\end{equation*}
Moreover, the range of the adjoint $\KK^*$ is the space of {\em centered matrices}
\index{${\bf S}^n_c$, centered matrices}
\index{centered matrices, ${\bf S}^n_c$}
$${\bf S}^n_c:= \{X \in \Sn: Xe=0\}.$$
The following result is fundamental.

\begin{theorem}[Parametrization of the \EDM cone]
The map $\KK\colon {\bf S}^n_c\to {\bf S}^n_H$ is a linear isomorphism carrying
${\bf S}^n_c\cap {\bf S}^n_+$ onto $\En$. The inverse $\KK^{\dagger}\colon 
{\bf S}^n_H\to {\bf S}^n_c$ is the map $\KK^{\dagger}(D)=-\frac 12 
	JD J$, where $J:=I-\frac 1n ee^T$ is the orthogonal projection onto
	$e^\perp$.\footnote{In fact, we can consider
the map as $\KK\colon {\bf S}^n\to {\bf S}^n$. Then we still have
$\KK({\bf S}^n_+) = \En$ and the \textdef{Moore-Penrose
pseudoinverse} $\KK^{\dagger}(D)=-\frac 12 
J\offDiag\left(D\right)J$, where $\offDiag$ zeros out the diagonal.}
\end{theorem}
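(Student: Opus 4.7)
The plan is to prove the theorem in three stages: first verify that $\KK$ and the candidate inverse $\KK^\dagger$ land in the claimed subspaces; second verify they are mutually inverse by direct computation, establishing the linear isomorphism ${\bf S}^n_c\cong {\bf S}^n_H$; and third verify that this isomorphism restricts to a bijection ${\bf S}^n_c\cap \Snp \leftrightarrow \En$ via a Gram factorization and the classical centroid-translation trick.

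For the first two stages, the domain/codomain check is immediate: $\KK(X)_{ii}=X_{ii}+X_{ii}-2X_{ii}=0$ for every $X\in\Sn$, and $(-\tfrac12 JDJ)e=-\tfrac12 JD(Je)=0$ since $Je=0$. For the inverse check, the plan is to start from the identity $\KK(X)=\diag(X)e^T+e\diag(X)^T-2X$, so that $J\KK(X)J=-2JXJ$ (because $Je=0$ annihilates the two rank-one terms on both sides). If $X\in{\bf S}^n_c$, then $JX=X=XJ$, and so $\KK^\dagger(\KK(X))=-\tfrac12 J\KK(X)J=JXJ=X$. For the other composition, the plan is to expand $JDJ=D-\tfrac1n ed^T-\tfrac1n de^T+\tfrac{s}{n^2}ee^T$ with $d:=De$ and $s:=e^TDe$, compute the diagonal of $-\tfrac12 JDJ$ using $D_{ii}=0$, and substitute $Y=-\tfrac12 JDJ$ into $\KK(Y)_{ij}=Y_{ii}+Y_{jj}-2Y_{ij}$. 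All $d_i$, $d_j$, and $s$ contributions cancel, leaving $\KK(Y)_{ij}=D_{ij}$.

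For the third stage, given $X\in \Snp$ with a Gram factorization $X=PP^T$ having rows $p_1,\dots,p_n\in\R^k$, the identity $\KK(X)_{ij}=\|p_i\|^2+\|p_j\|^2-2\langle p_i,p_j\rangle=\|p_i-p_j\|^2$ directly exhibits $\KK(X)\in\En$. Conversely, for $D\in\En$ realized by $p_1,\dots,p_n\in \R^k$, replace the realization by its centered version $q_i:=p_i-\tfrac1n\sum_j p_j$; squared distances are translation-invariant so $\|q_i-q_j\|^2=D_{ij}$, and the Gram matrix $X=QQ^T$ is PSD with $Xe=Q(\sum_i q_i)=0$, placing $X$ in ${\bf S}^n_c\cap \Snp$ with $\KK(X)=D$. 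The main obstacle is the algebraic bookkeeping in the inverse check of stage two; everything else is either a one-line consequence of $Je=0$ or a standard move from classical Euclidean distance geometry.
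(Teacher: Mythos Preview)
Your proof is correct and complete. Note, however, that the paper does not actually supply its own proof of this theorem: it is stated as a ``fundamental fact often used in the literature'' and left unproved. Your three-stage argument (range checks via $Je=0$; the two inverse identities $\KK^\dagger\KK=\mathrm{id}$ on ${\bf S}^n_c$ and $\KK\KK^\dagger=\mathrm{id}$ on ${\bf S}^n_H$ by direct expansion of $JDJ$; and the Gram/centering argument for the cone bijection) is exactly the standard derivation one finds in the distance-geometry literature, and every step checks out. There is nothing to compare against in the paper itself.
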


In particular, the cone $\En$ is linearly isomorphic to the cone ${\bf S}^n_c\cap {\bf S}^n_+$. On the other hand, observe for any matrix $X\in{\bf S}^n_+$ the equivalence
$$ Xe=0~\Leftrightarrow~ e^TXe=0~\Leftrightarrow~\tr(Xee^T)=0 ~\Leftrightarrow~ X\in (ee^T)^{\perp}. $$
Thus $\En$ is linearly isomorphic to the
face $${\bf S}^n_c\cap {\bf S}^n_+=(ee^T)^{\perp}\cap{\bf S}^n_+$$ as claimed. More explicitly, forming  an $n\times n$ orthogonal matrix $\begin{bmatrix}
\frac{1}{\sqrt n}e & U  \cr
\end{bmatrix}$ yields the equality 
${\bf S}^n_c\cap \Snp= U{\bf S}^{n-1}_+U$.

Thus the \EDM completion problem amounts to finding a matrix $X$ in the set
\begin{equation}
	\label{eq:EDMC}
\mathcal{F}_{p}=\{X\in {\bf S}^n_c\cap {\bf S}^n_+: \mathcal{P}_{E}\circ\KK(X)=d\}.
\end{equation}
To see how to recover the realizing points $p_i$ of $d\in \R^E$, consider a matrix $X\in \mathcal{F}_{p}$ and form a factorization 
$X=PP^T$ for some matrix $P\in \R^{n\times k}$ with $k=\rank X$. Let $p_1,\ldots,p_n\in \R^k$ be the rows of $P$. Then $X$ lying in ${\bf S}^n_c $ implies $\sum^n_{i=1} p_i=0$, that is, the points $p_i$ are centered around the origin, while the constraint $\mathcal{P}_{E}\circ\KK(X)=d$ implies 
$$d_{ij}=X_{ii}+X_{jj}-2X_{ij}=\|p_i\|^2+\|p_j\|^2-2p_i^Tp_j=\|p_i-p_j\|^2,
$$
for all $ij\in E$. Hence the points $p_1,\ldots, p_n$ solve the \EDM completion problem.

Let us turn now to understanding (strict) feasibility of $\mathcal{F}_p$.
A vector $d\in \R^E$ is called a {\em partial \EDM} if the restriction of
$d$ to every specified principal submatrix is an \EDMp. 
The graph $G$ is \textdef{\EDM completable} if every partial \EDM $d\in
\R^E$ is {\em \EDM completable}.
The following result, proved in~\cite{MR1321802}, is a direct analogue
of Theorem~\ref{thm:partial_comp} for the \PSD completion problem.
\begin{theorem}[\EDM completability \& chordal graphs]
\label{thm:compledm}
The graph $G$ is \EDM completable if, and only if, $G$ is chordal.
\end{theorem}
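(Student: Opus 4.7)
The statement is the direct \EDM analogue of Theorem~\ref{thm:partial_comp} for \PSD completion. I would prove sufficiency (chordal $\Rightarrow$ completable) by induction along a perfect elimination ordering, peeling off one simplicial vertex at a time and gluing the resulting realization by a rigid motion. For necessity, I would construct, on any induced cycle of length $\geq 4$, a partial \EDM whose only obstruction is the triangle inequality, and then extend it consistently to the rest of $G$.

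\textbf{Sufficiency.} Suppose $G$ is chordal and argue by induction on $|V|$, the case $|V| \leq 1$ being trivial. By a classical theorem, every chordal graph possesses a simplicial vertex $v$, i.e., a vertex whose neighborhood $N(v)$ is a clique of $G$. Let $G' := G - v$; this graph is again chordal, and $d$ restricted to $E(G')$ is again a partial \EDM since every clique of $G'$ is a clique of $G$. By induction there is a realization $\{p_u\}_{u \in V \setminus \{v\}} \subset \R^N$ of $d|_{E(G')}$. Because $\{v\} \cup N(v)$ is itself a clique of $G$, the restriction of $d$ to it is a fully specified \EDM and admits some realization $\{q_v\} \cup \{q_u\}_{u \in N(v)} \subset \R^M$. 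The two configurations $\{p_u\}_{u \in N(v)}$ and $\{q_u\}_{u \in N(v)}$ have identical pairwise distances, so after embedding both in a common $\R^{N+M}$ (padding with zeros in extra coordinates), a rigid motion aligns them. Taking $p_v$ to be the image of $q_v$ under the same motion produces a realization of $d$ on all of $G$.

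\textbf{Necessity.} If $G$ is not chordal, it contains an induced cycle $C = v_1 v_2 \ldots v_k v_1$ with $k \geq 4$. Fix $L > k^2$ and define $d \in \R^{E(G)}$ by setting $d_{v_i v_{i+1}} := 1$ for $i=1,\ldots,k-1$, $d_{v_k v_1} := k^2$, $d_{ij} := L$ for every edge with exactly one endpoint in $V(C)$, and $d_{ij} := 0$ for every edge with both endpoints outside $V(C)$. Because $C$ is an induced cycle of length $\geq 4$, it contains no triangles, so every clique $\alpha$ of $G$ meets $V(C)$ in at most two vertices, which (if two) must form an edge of $C$. Each clique-restriction of $d$ is thus realized by placing every vertex of $\alpha \setminus V(C)$ at a single common point, equidistant from the (at most two) cycle vertices in $\alpha$; the choice $L > k^2$ makes such placements possible. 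Hence $d$ is a genuine partial \EDMp. On the other hand, any completion of $d$ would realize the cycle with $\|p_{v_i} - p_{v_{i+1}}\| = 1$ for $i < k$ and $\|p_{v_1} - p_{v_k}\| = k$, contradicting the triangle inequality $\|p_{v_1} - p_{v_k}\| \leq \sum_{i=1}^{k-1} \|p_{v_i} - p_{v_{i+1}}\| = k-1$. So $d$ admits no completion and $G$ is not \EDM completable.

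\textbf{Main obstacle.} The sufficiency step rests on the classical rigidity fact that two congruent finite point configurations in Euclidean space can, after embedding in a sufficiently high-dimensional ambient space, be brought into coincidence by a rigid motion; elementary but worth stating carefully. In the necessity direction, the subtlety is verifying that the constructed $d$ really is a partial \EDM on \emph{all} of $G$, not just on $C$: the induced-cycle hypothesis (in particular, that $C$ is triangle-free because $k \geq 4$) is exactly what restricts each clique of $G$ to meet $V(C)$ in at most an edge of $C$, enabling the uniform extension by the two values $0$ and $L$ without introducing a non-realizable clique.
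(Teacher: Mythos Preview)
The paper does not give its own proof of this theorem; it simply attributes the result to Bakonyi and Johnson~\cite{MR1321802} and moves on. So there is nothing in the text to compare your argument against line by line.

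That said, your proof is sound and follows the natural route. The sufficiency direction via a simplicial vertex and gluing by a rigid motion is the standard inductive argument for chordal completion results, and your handling of the ambient-dimension mismatch by padding into $\R^{N+M}$ is correct. For necessity, your construction is clean: the key observation---that an induced cycle of length $\geq 4$ forces every clique of $G$ to meet $V(C)$ in at most one cycle edge---is exactly what makes the two-value extension (all non-cycle vertices collapsed to a point, at distance $\sqrt{L}$ from the cycle) a legitimate partial \EDMp, and your check that $L > k^2$ suffices for the equidistant placement even across the long edge $v_kv_1$ is right. The triangle-inequality obstruction then kills any completion. One cosmetic point: you only need $L \geq k^2/4$ for the realizability of the clique containing the long edge, so $L > k^2$ is more than enough but you might say so explicitly.
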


We also mention in passing the following observation from~\cite{DrPaWo:14}.
\begin{theorem}[Closedness of the projected \EDM cone]
	\label{thm:close_EDM_fixed} 
	The projected image $\PP(\En)$ is always closed.
	\qed
\end{theorem}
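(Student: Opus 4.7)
The plan is a direct sequential compactness argument using point realizations. Suppose $d^k \to d^*$ with each $d^k \in \PP(\En)$; the goal is to produce $D^* \in \En$ with $\PP(D^*) = d^*$. Pick any $D^k \in \En$ with $\PP(D^k) = d^k$. Since every $n \times n$ EDM has embedding dimension at most $n-1$, we may choose a realization $p_1^k, \ldots, p_n^k \in \R^{n-1}$ with $D^k_{ij} = \|p_i^k - p_j^k\|^2$ for all $i,j$.

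The key step is to exploit translational freedom component-by-component. Decompose $V$ into the connected components $C_1, \ldots, C_r$ of the graph $G = (V, E)$, and in each component $C_\alpha$ fix a root vertex $i_\alpha$. Translate $\{p_i^k : i \in C_\alpha\}$ by $-p_{i_\alpha}^k$; call the resulting EDM $\tilde{D}^k \in \En$. Since every edge of $E$ lies within a single component, both endpoints of any such edge undergo the same translation, so $\tilde{D}^k_{ij} = D^k_{ij}$ for all $ij \in E$, i.e.\ $\PP(\tilde{D}^k) = d^k$. We may thus assume from the outset that $p_{i_\alpha}^k = 0$ for each $\alpha$.

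With this normalization, the graph-path triangle inequality bounds all realizations uniformly in $k$: for any $i \in C_\alpha$, pick a path $i_\alpha = j_0, j_1, \ldots, j_s = i$ in $E$, so that
$$
\|p_i^k\| \; \leq \; \sum_{\ell=0}^{s-1} \|p_{j_\ell}^k - p_{j_{\ell+1}}^k\| \; = \; \sum_{\ell=0}^{s-1} \sqrt{d^k_{j_\ell j_{\ell+1}}}.
$$
The right-hand side is bounded since $\{d^k\}$ is convergent. Pass to a subsequence so that $p_i^k \to p_i^*$ for each $i$, and set $D^*_{ij} := \|p_i^* - p_j^*\|^2$. Then $D^* \in \En$, continuity yields $D^k_{ij} \to D^*_{ij}$, and in particular $\PP(D^*) = d^*$, closing the argument.

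I expect no serious obstacle; the proof is essentially a compactness argument. The one point that must be argued carefully rather than waved away is the \emph{component-wise} translation: translating each connected component independently changes $D^k$ in the cross-component off-diagonal blocks, but preserves $\PP(D^k)$ precisely because no edge of $E$ crosses between components. Without this observation the realizations could drift off to infinity in the gaps between components of $G$, and the compactness step would fail.
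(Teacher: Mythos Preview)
Your proof is correct. The paper does not actually give a proof of this theorem: it is stated with a \texttt{\textbackslash qed} and attributed to the reference~\cite{DrPaWo:14}, so there is no in-text argument to compare against.

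Your sequential compactness argument via point realizations is the natural approach and is carried out cleanly. The component-wise translation step is exactly the right idea and you have correctly identified why it is needed: without separately anchoring each connected component of $G$, realizations in distinct components could drift apart without affecting any entry in $\PP_E(D^k)$, and the compactness step would fail. Since edges of $E$ lie entirely within components, translating each component independently preserves $\PP_E(\tilde D^k)=d^k$ while forcing the root vertices to the origin; the graph-path triangle inequality then bounds every $\|p_i^k\|$ uniformly in $k$, and Bolzano--Weierstrass finishes the job.
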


Given a clique $\alpha$ in $G$, we let $\EE^{\alpha}$ denote the set of $|\alpha|\times |\alpha|$ Euclidean distance matrices indexed by $\alpha$. In what follows, given a partial matrix $d\in\R^E$, the restriction $d[{\alpha}]$ can then be thought of either as a vector in $\R^{E(\alpha)}$  or as a hollow matrix in $\Sc^{\alpha}$.
We also use the symbol $\KK_{\alpha}\colon \Sc^{\alpha}\to\Sc^{\alpha}$ to indicate the mapping $\KK$ acting on $\Sc^{\alpha}$. The following recipe provides a simple way to discover faces of the PSD cone containing the feasible region from specified cliques in the graph. 

\begin{theorem}[Clique facial reduction for \EDM completions]
	\label{thm:EDM_face}
Let $\alpha$ be any $k$-clique in the graph $G$.
Let $d\in\R^{E}$ be a partial Euclidean distance matrix and define the 
relaxation 
$$
\begin{array}{rcl}
\FF_p(\alpha) &:= &
\{X\in \Snp\cap {\bf S}^n_c:  P_{E[\alpha]}\circ \KK(X)=d[\alpha]\}.
\end{array}
$$
Then for any matrix $V_\alpha$ exposing $\face\big(\KK^{\dag}_{\alpha}(d[\alpha]),{\bf S}^{\alpha}_+\cap {\bf S}^{\alpha}_c\big)$, the matrix
$$
\PP^{*}_{E[\alpha]}V_{\alpha}\quad \textrm{ exposes }\quad \face(\FF_p(\alpha),\Snp \cap {\bf S}^n_c).
$$
\end{theorem}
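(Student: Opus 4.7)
The plan is to realize this as an instance of Theorem~\ref{thm:face_red} applied to the ambient cone $\mathcal{K} := \Snp \cap {\bf S}^n_c$, with the linear map $A := \mathcal{P}_{E[\alpha]} \circ \KK$ and right-hand side $d[\alpha]$. First, I would identify the image $A(\mathcal{K})$. The inclusion $A(\mathcal{K}) \subseteq \EE^\alpha$ is immediate because $\KK$ carries $\mathcal{K}$ into $\En$ and any principal submatrix of an EDM is itself an EDM. For the reverse inclusion, given $D \in \EE^\alpha$ realized by points $\{y_i\}_{i\in\alpha}$ in some $\R^r$, one extends the configuration to $V$ by placing an arbitrary point at each vertex $i\notin\alpha$, re-centers at the centroid (preserving all pairwise distances), and takes the Gram matrix of the shifted points; the result lies in $\mathcal{K}$ and maps under $A$ onto $D$. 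Hence $A(\mathcal{K}) = \EE^\alpha$.

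Next, I would exploit the cone isomorphism ${\bf S}^\alpha_+ \cap {\bf S}^\alpha_c \to \EE^\alpha$ induced by $\KK_\alpha$, which gives the face correspondence $\face(d[\alpha], \EE^\alpha) = \KK_\alpha(F)$, where $F := \face(\KK^{\dag}_\alpha(d[\alpha]), {\bf S}^\alpha_+ \cap {\bf S}^\alpha_c)$. Using the identity $A(X) = \KK_\alpha(X[\alpha])$ together with the fact that the kernel of $\KK_\alpha$ on ${\bf S}^\alpha$ consists of the matrices $e_\alpha u^T + u e_\alpha^T$ with $u \in \R^\alpha$, a short calculation gives $X[\alpha] - J_\alpha X[\alpha] J_\alpha \in \ker \KK_\alpha$, where $J_\alpha := I - \tfrac{1}{k} e_\alpha e_\alpha^T$. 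Theorem~\ref{thm:face_red}(I) therefore yields the clean description
\[
\face(\FF_p(\alpha), \mathcal{K}) \;=\; \{\,X \in \mathcal{K}: J_\alpha X[\alpha] J_\alpha \in F\,\}.
\]

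Finally, I would verify directly that $W := \PP^{*}_{E[\alpha]} V_\alpha$ exposes this face. Assuming without loss of generality $V_\alpha \in {\bf S}^\alpha_c$ (reducing to this case by projecting via $J_\alpha \cdot J_\alpha$, which does not alter inner products against elements of ${\bf S}^\alpha_c$), the basic adjoint relation together with $V_\alpha e_\alpha = 0$ gives
\[
\langle W, X \rangle \;=\; \langle V_\alpha, X[\alpha] \rangle \;=\; \langle V_\alpha, J_\alpha X[\alpha] J_\alpha \rangle \qquad \textrm{for all } X \in {\bf S}^n.
\]
Since $J_\alpha X[\alpha] J_\alpha \in {\bf S}^\alpha_+ \cap {\bf S}^\alpha_c$ whenever $X \in \mathcal{K}$, and $V_\alpha^\perp \cap ({\bf S}^\alpha_+ \cap {\bf S}^\alpha_c) = F$ with $V_\alpha$ in the dual of this cone, one concludes simultaneously that $W \in \mathcal{K}^*$ and that the zero-set of $\langle W,\cdot\rangle$ on $\mathcal{K}$ is exactly $\face(\FF_p(\alpha), \mathcal{K})$, as desired. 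The main obstacle is the careful bookkeeping about ambient spaces: ${\bf S}^\alpha_+ \cap {\bf S}^\alpha_c$ has empty interior in ${\bf S}^\alpha$, so the phrase ``$V_\alpha$ exposes $F$'' must be disambiguated, and one must check that translating $V_\alpha$ by any kernel-of-$\KK_\alpha$ matrix changes neither the exposed face nor the value of $\langle W, X\rangle$ on $\mathcal{K}$, which is what permits the centering reduction.
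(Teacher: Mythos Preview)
The paper does not actually prove this theorem; it is stated and then paraphrased, with the result coming from the reference~\cite{DrPaWo:14}. Your route via Theorem~\ref{thm:face_red} is the natural one, and the core of your argument is correct: the identification $A(\KK)=\EE^\alpha$, the identity $A(X)=\KK_\alpha(X[\alpha])$, the description
\[
\face(\FF_p(\alpha),\KK)=\{X\in\KK:\, J_\alpha X[\alpha]J_\alpha\in F\},
\]
and the verification that $W=\PP^{*}_{E[\alpha]}V_\alpha$ exposes this set whenever $V_\alpha\in\Sc^\alpha_c$, all go through as you describe.

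There is, however, a genuine gap in your ``centering reduction.'' The claim that translating $V_\alpha$ by an element of $\ker\KK_\alpha$ does not change $\langle W,X\rangle$ on $\KK=\Snp\cap\Sc^n_c$ is false. For $M=e_\alpha u^T+u e_\alpha^T$ one gets $\langle \PP^{*}_{E[\alpha]}M,\,X\rangle=\langle M,X[\alpha]\rangle=2u^T X[\alpha]e_\alpha$, and the constraint $Xe=0$ in $\R^n$ does not force $X[\alpha]e_\alpha=0$. Concretely, take $k<n$, $d[\alpha]$ in the relative interior of $\EE^\alpha$, and $V_\alpha=e_\alpha e_\alpha^T$; this $V_\alpha$ exposes the full cone $\Sc^\alpha_+\cap\Sc^\alpha_c$, yet $\langle \PP^{*}_{E[\alpha]}V_\alpha,\,X\rangle=e_\alpha^T X[\alpha]e_\alpha$ is strictly positive at $X=I-\tfrac{1}{n}ee^T\in\KK$, so $\PP^{*}_{E[\alpha]}V_\alpha$ exposes a \emph{proper} face of $\KK$ rather than $\face(\FF_p(\alpha),\KK)=\KK$. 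Thus the theorem, read literally as ``for any matrix $V_\alpha$ exposing \ldots,'' is not valid, and your reduction cannot save it.

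The resolution is exactly the convention the paper states in the paragraph following the theorem: one takes $V_\alpha\in\Sc^\alpha_+\cap\Sc^\alpha_c$, i.e., one works in the ambient space $\Sc^\alpha_c$ where the cone is self-dual. Under that reading no reduction is needed and your argument is complete; just replace the ``without loss of generality'' step by this explicit hypothesis on $V_\alpha$.
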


In other words, the recipe is as follows. Given a clique $\alpha$ in $G$, consider the matrix $\mathcal{K}^{\dag}_{\alpha}(d[\alpha])\in {\bf S}^{\alpha}_+\cap {\bf S}^{\alpha}_c$. Let $V_{\alpha}\in {\bf S}^{\alpha}_+\cap {\bf S}^{\alpha}_c$ be an exposing vector of $\face\big(\KK^{\dag}_{\alpha}(d[\alpha]),{\bf S}^{\alpha}_+\cap {\bf S}^n_c\big)$. Then $\PP^{*}_{E[\alpha]}V_{\alpha}$ is an extension of $V_{\alpha}$ to ${\bf S}^{n}$ obtained by padding $V_{\alpha}$ with zeroes. The above theorem guarantees that the entire feasible region of \eqref{eq:EDMC} is contained in the face of $\Ss_c^n\cap \Snp$ exposed by $\PP^{*}_{E[\alpha]}V_{\alpha}$.

In direct analogy with Theorem~\ref{thm:cliquesuffPSD} for PSD completions, the minimal face is sure to be discovered in this way for chordal graphs.

\begin{theorem}[Clique facial reduction for \EDM is sufficient]
\label{thm:EDM_facesuff}
Suppose that $G$ is chordal, and consider a partial Euclidean distance matrix $d\in \R^{E}$ and the region
\[
F:=\{X\in \Ss_c^n\cap \Snp : [\KK(X)]_{ij} = d_{ij} \textrm{ for all } ij\in E\}.
\]
Let $\Theta$ denote the set of all maximal cliques in $G$, and for each $\alpha \in \Theta$ define
\[
F_{\alpha}:=\{X\in \Ss_c^n\cap \Snp : [\KK(X)]_{ij} = d_{ij} \textrm{ for all } ij\in E[\alpha]\}.
\]
Then the equality $$\face(F, \Ss_c^n\cap\Snp)=\bigcap_{\alpha\in \Theta}
\face(F_{\alpha},\Ss_c^n\cap\Snp)\qquad \textrm{ holds}.
$$ 
\end{theorem}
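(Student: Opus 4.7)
The plan is to mirror the structure of the PSD-completion result in Theorem~\ref{thm:cliquesuffPSD}, leveraging the linear isomorphism $\KK$ between ${\bf S}^n_c \cap \Snp$ and $\En$ together with the clique exposing-vector recipe of Theorem~\ref{thm:EDM_face}.

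First I would dispatch the trivial inclusion. Since $F \subseteq F_\alpha$ for every maximal clique $\alpha \in \Theta$, the minimality property of the face operator yields $\face(F, {\bf S}^n_c \cap \Snp) \subseteq \face(F_\alpha, {\bf S}^n_c \cap \Snp)$, and therefore
$$
\face(F, {\bf S}^n_c \cap \Snp) \;\subseteq\; \bigcap_{\alpha\in\Theta}\face(F_\alpha, {\bf S}^n_c \cap \Snp).
$$

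For the reverse inclusion, denote the right-hand side by $F^\sharp$. By Theorem~\ref{thm:EDM_face}, each $\face(F_\alpha, {\bf S}^n_c \cap \Snp)$ is exposed by the padded vector $W_\alpha := \PP^{*}_{E[\alpha]} V_\alpha$, where $V_\alpha$ exposes $\face\big(\KK^{\dagger}_{\alpha}(d[\alpha]), {\bf S}^\alpha_+ \cap {\bf S}^\alpha_c\big)$ inside ${\bf S}^\alpha$. By Proposition~\ref{prop:inter_face_exp}, the intersection $F^\sharp$ is itself an exposed face of ${\bf S}^n_c \cap \Snp$, namely $F^\sharp = \big(\sum_{\alpha\in\Theta} W_\alpha\big)^\perp \cap {\bf S}^n_c \cap \Snp$. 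The crux is now to show that $\ri F^\sharp \cap F$ is nonempty: once this is in hand, any $X$ in this intersection satisfies $\face(X, {\bf S}^n_c \cap \Snp) = F^\sharp$ by the partition property of relative interiors of faces, while the membership $X \in F$ then gives $F^\sharp = \face(X, {\bf S}^n_c \cap \Snp) \subseteq \face(F, {\bf S}^n_c \cap \Snp)$, completing the proof.

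To produce such an $X$ I would exploit a perfect elimination ordering of the chordal graph $G$, equivalently a clique tree whose maximal cliques satisfy the running intersection property. Processing the cliques in a compatible order, I would inductively build a point configuration realizing $d$: for each new clique $\alpha_{k+1}$, Theorem~\ref{thm:compledm} applied to the single-clique subproblem (with the maximal embedding dimension allowed by the face exposed by $V_{\alpha_{k+1}}$) produces a realization of $d[\alpha_{k+1}]$ whose restriction to the shared subclique matches the running realization up to a rigid motion, allowing the gluing to proceed. The resulting matrix $X \in F$ then has rank equal to the one prescribed by each clique face on its diagonal block, which together with the exposing-vector description of $F^\sharp$ places $X$ in $\ri F^\sharp$.

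The main obstacle is the consistent gluing across shared subcliques and the verification that the glued matrix indeed lands in $\ri F^\sharp$ rather than in a strictly smaller face. Chordality, through the running intersection property, is exactly the combinatorial ingredient that rules out cyclic obstructions to such gluing, paralleling its role in Theorem~\ref{thm:compledm}; a clean alternative I would consider is to transport the entire problem through $\KK$ to a PSD completion problem on an apex-augmented graph $\hat G$ (add a new vertex adjacent to all of $V$, which preserves chordality) and invoke Theorem~\ref{thm:cliquesuffPSD} directly, with the main care going into matching maximal cliques of $\hat G$ with maximal cliques of $G$ and reconciling the two face-intersection formulas.
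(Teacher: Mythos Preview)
The paper does not actually include a proof of Theorem~\ref{thm:EDM_facesuff}; it is stated without argument, in direct analogy with Theorem~\ref{thm:cliquesuffPSD}, and the surrounding commentary points to the reference~\cite{DrPaWo:14} for details. So there is no in-text proof to compare your proposal against.

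On the merits of your sketch: the overall architecture is correct. The easy inclusion is fine, and reducing the hard inclusion to exhibiting a point of $F$ lying in $\ri F^\sharp$ is exactly the right move. Both strategies you outline---inductive gluing along a clique tree, and transporting the problem through $\KK$ to a PSD completion on the apex-extended graph $\hat G$---are viable and are indeed the standard routes; the apex reduction is closest in spirit to how the result is typically derived from Theorem~\ref{thm:cliquesuffPSD}.

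That said, the gluing paragraph has a genuine gap. You assert that the glued realization $X$ lands in $\ri F^\sharp$ because it ``has rank equal to the one prescribed by each clique face on its diagonal block.'' This is not sufficient as stated. What you need is $X\in\ri\,\face(F_\alpha,{\bf S}^n_c\cap\Snp)$ for every $\alpha$, after which $\ri F^\sharp=\bigcap_\alpha \ri\,\face(F_\alpha,\cdot)$ follows from the standard relative-interior-of-intersections fact. But membership in $\ri\,\face(F_\alpha,\cdot)$ is a condition on the \emph{range} of the full matrix $X$ relative to $\ker W_\alpha$, not merely on the rank of the $\alpha\times\alpha$ block $X[\alpha]$. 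Concretely, at each gluing step the new points of $\alpha_{k+1}\setminus S$ must be placed so that the affine span of the running configuration grows as much as the exposing vector $W_{\alpha_{k+1}}$ permits; a careless (e.g.\ minimal-dimension) gluing can produce an $X\in F^\sharp$ that sits in a strictly smaller face. You acknowledge this obstacle in words, but the proposal does not supply the argument that a generic-position gluing achieves it. If you pursue the apex route instead, the corresponding work is to check that the linear isomorphism between ${\bf S}^n_c\cap\Snp$ and the relevant face of ${\bf S}^{n+1}_+$ carries each $\face(F_\alpha,\cdot)$ to the face coming from the clique $\{0\}\cup\alpha$ in $\hat G$, so that Theorem~\ref{thm:cliquesuffPSD} transfers verbatim; this is routine but must be written out.
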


\begin{corollary}[Singularity degree of chordal completions]
	\label{cor:sing}
If the graph $G=(V,E)$ is chordal, then the \EDM completion problem has singularity degree at most one, when feasible.  	
\end{corollary}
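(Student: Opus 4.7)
The plan is to deduce the singularity-degree bound directly from three ingredients already in hand: Theorem~\ref{thm:EDM_facesuff} (under chordality, the minimal face of $\FF_p$ in $\Snp \cap \Ss^n_c$ equals the intersection of the minimal faces of the clique-relaxed regions $\FF_p(\alpha)$), Theorem~\ref{thm:EDM_face} (each such clique face is exposed by an explicit vector $W_\alpha := \PP^*_{E[\alpha]} V_\alpha$), and Proposition~\ref{prop:inter_face_exp} (an intersection of exposed faces is itself exposed, by the sum of the exposing vectors).

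Fix a feasible partial Euclidean distance matrix $d \in \R^E$, and let $\Theta$ denote the collection of maximal cliques of $G$. For each $\alpha \in \Theta$, Theorem~\ref{thm:EDM_face} produces an exposing vector $W_\alpha$ of $\face(\FF_p(\alpha), \Snp \cap \Ss^n_c)$. Iterated application of Proposition~\ref{prop:inter_face_exp}, applied inside the cone $\Snp \cap \Ss^n_c$, shows that the single vector $W := \sum_{\alpha \in \Theta} W_\alpha$ exposes $\bigcap_{\alpha \in \Theta} \face(\FF_p(\alpha), \Snp \cap \Ss^n_c)$. Chordality of $G$ together with Theorem~\ref{thm:EDM_facesuff} identifies this intersection with $\face(\FF_p, \Snp \cap \Ss^n_c)$, so the single vector $W$ already exposes the entire minimal face.

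To conclude $\sing \Pprob \leq 1$, one must package $W$ as the output of a single iteration of the auxiliary system \eqref{eq:auxsysprim_feas} for the EDM completion problem, whose constraint map is $A = \PP_E \circ \KK$ with right-hand side $d$. Each $W_\alpha$ is supported on $E[\alpha] \subseteq E$ and lies in the dual cone $(\Snp \cap \Ss^n_c)^*$ by construction, so stacking the clique certificates into a single $y \in \R^E$ gives $A^* y = W$; the equality $\langle d, y\rangle = 0$ then follows from $W$ vanishing on the nonempty $\FF_p$. Hence a single iteration of facial reduction exposes the minimal face, yielding the corollary. The only real obstacle is the bookkeeping that matches the encoding of $V_\alpha$ as a matrix in $\Ss^\alpha$ exposing a face of $\Ss^\alpha_+ \cap \Ss^\alpha_c$ with its role as a vector in $\R^{E[\alpha]}$ fed into $\PP^*_{E[\alpha]}$ and, ultimately, with the appropriate block of $y$; once that identification is made, the result is an immediate consequence of the three cited theorems.
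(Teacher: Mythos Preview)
Your overall strategy is exactly the one the paper has in mind: combine Theorem~\ref{thm:EDM_facesuff}, Theorem~\ref{thm:EDM_face}, and Proposition~\ref{prop:inter_face_exp} to produce a single vector $W=\sum_{\alpha\in\Theta}W_\alpha$ exposing the minimal face $\face(\FF_p,\Snp\cap\Ss^n_c)$, and then recognize this as one step of facial reduction. The paper states the corollary without proof precisely because this is the intended immediate deduction.

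There is, however, a real slip in your third paragraph. You write that ``stacking the clique certificates into a single $y\in\R^E$ gives $A^*y=W$,'' identifying the clique certificate with $V_\alpha$ itself. But the constraint map is $A=\PP_E\circ\KK$, so $A^*=\KK^*\circ\PP_E^*$, not $\PP_E^*$. If you stack the $V_\alpha$ into $y$, you get $\PP_E^*y=W$ and hence $A^*y=\KK^*W\neq W$. (Relatedly, $W_\alpha$ is not ``supported on $E[\alpha]$'' in the sense you need: the edge set $E$ for \EDM completion contains no self-loops, yet $V_\alpha\in\Ss^\alpha_+\cap\Ss^\alpha_c$ has nonzero diagonal.) The fix is straightforward: for each clique $\alpha$, since $\KK^*_\alpha:\Ss^\alpha_H\to\Ss^\alpha_c$ is an isomorphism, set $z_\alpha:=(\KK^*_\alpha)^{-1}V_\alpha\in\Ss^\alpha_H\cong\R^{E[\alpha]}$. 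A short computation (using that $\KK^*$ applied to a hollow matrix supported on the $\alpha$-block equals $\KK^*_\alpha$ on that block, padded by zeros) shows $\KK^*\PP^*_{E[\alpha]}z_\alpha=\PP^*_{E[\alpha]}V_\alpha=W_\alpha$. Summing over $\alpha$ and taking $y\in\R^E$ to be the sum of the extended $z_\alpha$'s then gives $A^*y=W$ honestly, and $\langle d,y\rangle=\langle W,X\rangle=0$ for any $X\in\FF_p$ finishes the certificate. Once you make this correction, your argument is complete and matches the paper's intended route.
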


Finally, in analogy with Corollary~\ref{cor:sing_psd}, the following is true.  Define the
\textdef{singularity degree of a graph} $G=(V,E)$ to be the maximal singularity
degree among all \EDM completion problems with
\EDM completable partial matrices $d\in \R^E$.
\begin{corollary}[Singularity degree of chordal completions]
	\label{cor:sing_psd}{\hfill \\ }
	The graph $G$ has singularity degree one if, and only if, $G$ is chordal.
\end{corollary}

\subsection{\EDM and \SNL with exact data}
\label{sect:edmsnlexact}
The material above explains in part the surprising success of the algorithm in 
\cite{kriswolk:09} for the \textdef{sensor network localization problem, \SNLp}.
\index{\SNLp, sensor network localization problem}
The \SNL  problem differs from the \EDM completion problem only in
that some of the points or \textdef{sensors} $p_i$ that define the problem are
in fact \textdef{anchors} and their positions are known. 
The algorithm proceeds by iteratively finding faces of the PSD cone from cliques and intersecting them two at a time, thereby decreasing the dimension of the problem in each step. In practice, this procedure often terminates with a unique solution of the problem.
We should mention that the anchors are a \emph{red herring}. Indeed, they should only be treated differently than the other sensors after \emph{all} the
sensors have been localized. In the post-processing step, a so-called Procrustes problem is solved to bring the putative anchors as close as possible to their original (known) positions and thus rotating the sensor positions appropriately. 
Another important point in applications is that the distances for
sensors that are close enough to each other are often known. This suggests that there are
often many local cliques in the graph. This means that the resulting
\SDP relaxation is highly degenerate but this degeneracy can be
exploited as we have seen above.

Some numerical results from the year 2010 in~\cite{kriswolk:09} appear in 
Table~\ref{table:SNL}. 
		\begin{table}[ht!]
        \begin{center}
\begin{tabular}{|ccc|c|c|}
\hline
\# sensors & \# anchors & radio range & RMSD & Time
\\ \hline
$20000$ & 9 & $.025$ & $5e{-16}$ & $25$s \\
$40000$ & 9 & $.02$ & $8e{-16}$ & $1$m $23$s \\
$60000$ & 9 & $.015$ & $5e{-16}$ & $3$m $13$s \\
\hline
$100000$ & 9 & $.01$ & $6e{-16}$ & $9$m $8$s \\
\hline
\end{tabular}
	\caption{Empirical results for \SNL}
	\label{table:SNL}
        \end{center}
        \end{table}
These results are on random, noiseless problems
using a 2.16 GHz Intel Core 2 Duo, 2 GB of RAM. The embedding
dimension is $r=2$ and the sensors are in a square region 
$[0,1] \times [0,1]$ with $m = 9$ anchors.
We use the  Root Mean Square Deviation to measure the quality of the solution:
                \[   
                        \mbox{RMSD} = 
                        \left( \frac{1}{n} \sum_{i=1}^n \| p_i - p_i^{\mbox{\tiny true}} \|^2 \right)^{1/2}
                \]
The \emph{huge} expected 
number of
constraints and variables in the four problems in Table~\ref{table:SNL} are
\[
	\begin{array}{rcl}
	M  &=&  \begin{pmatrix}3,078,915  & 12,315,351  & 27,709,309 &
76,969,790\end{pmatrix}
\\N  &=& 
10^9 \begin{pmatrix} 0.2000  & 0.8000  & 1.8000  & 5.0000 \end{pmatrix},
\end{array}
\]
respectively\footnote{The 2016 tarfile with MATLAB codes is available:
\begin{quote}
\tiny{
}
\end{quote}
}.


\subsection{Extensions to noisy \EDM and \SNL problems}
\label{sect:edmsnlnoisy}
When there is noise in the distance measurements -- the much more realistic setting -- the approach requires an intriguing modification. Let us see what goes wrong, in the standard approach.
Given a clique $\alpha$, let us form $\KK^{\dag}_{\alpha}(d[\alpha])$ as in Theorem~\ref{thm:EDM_face}. The difficulty is that this matrix is no longer PSD. On the other hand, it is simple enough to find the nearest matrix $W$  of ${\bf S}^{\alpha}_+\cap {\bf S}^n_c$ to $\KK^{\dag}_{\alpha}(d[\alpha])$. Let then $V_{\alpha}$ be a vector exposing $\face(W,{\bf S}^{\alpha}_+\cap {\bf S}^n_c)$. Letting $\Theta$ be the collection of cliques under consideration, we thus obtain faces $F_{\alpha}$ exposed by $\PP^{*}_{E[\alpha]}V_{\alpha}$ for $\alpha\in \Theta$. In the noiseless regime, the entire feasible region is contained in the intersection $\bigcap_{\alpha\in \Theta}F_{\alpha}$. In the noisy regime, this intersection likely consists only of the origin for the simple reason that randomly perturbed faces typically intersect only at the origin. Here is an elementary fix that makes the algorithm robust to noise.
Form the sum 
$$V:=\sum_{\alpha\in\Theta}\PP^{*}_{E[\alpha]}V_{\alpha}.$$
Again in the noiseless regime, Proposition~\ref{prop:inter_face_exp} implies that $V$ exposes precisely the intersection  $\bigcap_{\alpha\in \Theta}F_{\alpha}$. When noise is present, the matrix $V$ will likely have only one zero eigenvalue corresponding to the vector of all ones $e$ and the rest of the eigenvalues will be strictly positive. Suppose we know that the realization of the graph should lie in $r$-dimensional space. Then we can find a rank $n-{r+1}$ best PSD approximation of $V$ and use it to expose a face of the PSD cone. 
Under appropriate conditions, this procedure is indeed provably robust to noise and extremely effective in practice.
A detailed description of such a scheme is presented in \cite{ChDrWo:14}.

\section{Low-rank matrix completions}
In this section, we consider another example inspired by facial reduction. 
We will be considering matrices $Z\in \Rmn$; for convenience, we will index the rows of $Z$ by $ i\in \{1,\ldots, m\}$  and  the columns using $j \in \{m+1,\ldots,m+n\}$.
Consider two vertex sets $V_1=\{1,\ldots,m\}$ and $V_2:=\{m+1,\ldots,m+n\}$ and a bipartite graph $G=(V_1\cup V_2,E)$. 

Given a partial matrix $z\in \R^E$,
the \textdef{low-rank matrix completion problem, \LRMCp},
\index{\LRMCp, low-rank matrix completion problem}
aims to find a rank $r$ matrix $Z$ from the partially observed
elements $z$. 
A common approach (with statistical guarantees) is to instead solve the convex problem:
\begin{equation}\label{eqn:min_comp_nuc}
\min_{Z\in  \Rmn} \|Z\|_* \quad \textrm{ subject to } P_{E}(Z)=z,
\end{equation}
where $\|Z\|_*$ is the \textdef{nuclear
norm} -- the sum of the singular values of $Z$.
Throughout the section, we will make the following assumption: the solution of the convex problem \eqref{eqn:min_comp_nuc} coincides with the rank $r$ matrix $Z$ that we seek. There are standard statistical assumptions that one makes in order to guarantee this to be the case \cite{Fazel:02,Rechtparrilofazel,MR2565240}.

 It is known that this problem \eqref{eqn:min_comp_nuc}
\index{$\|Z\|_*$, nuclear norm}
can be solved 
efficiently using \SDPp. At first glance
it appears that this does not fit into our framework for problems where
strict feasibility fails; indeed strict feasibility holds under the appropriate reformulation below. We will see, however, that 
one can exploit the special structure at the \underline{\emph{optimum}} and
discover a face of the PSD cone containing an \underline{optimal solution}, thereby decreasing the dimension of the problem. Even though, this is not facial reduction exactly, the ideas behind facial reduction play the main role. 

Let us first show that the problem \eqref{eqn:min_comp_nuc} can be written equivalently as the SDP:
\begin{equation}
\label{sdpnuclear}
\begin{array}{rlll}
	&\min & \frac 12\trace(Y) \\
	&\text{s.t.}& Y=\begin{bmatrix}
		A & Z \cr Z^T & B
	\end{bmatrix} & \\
	  &         &  \PP_{E}(Z) = z \\
	&		  & Y \in \Ss_+^{m+n}
\end{array}.
\end{equation}
To see this, we recall a classical fact that the operator norm $\|\cdot\|_2$ of the matrix is dual to the nuclear norm  $\|\cdot\|_*$, that is 
$$\|Z\|_*=\sup\{\trace(Z^TX): \|X\|_{2}\leq 1\}.$$
Note the equivalence
$$\|X\|_2\leq 1~\Longleftrightarrow~ I-XX^T\succeq 0~\Longleftrightarrow~ \begin{bmatrix} I & X\\ X^T & I\end{bmatrix} \succeq 0.$$
Thus we may represent the nuclear norm through an \SDPp:
$$\|Z\|_*=\sup_X\left\{\trace(Z^TX):\begin{bmatrix} I & X\\ X^T &
I\end{bmatrix} \succeq 0\right\}.$$ The dual of this \SDP is 
\begin{equation*}
\begin{array}{rlll}
	&\min & \frac 12\trace(Y) \\
	&\text{s.t.}& Y=\begin{bmatrix}
		A & Z \cr Z^T & B
	\end{bmatrix} & \\
	&		  & Y \in \Ss_+^{m+n}
\end{array}.
\end{equation*}
Thus the problems \eqref{eqn:min_comp_nuc} and \eqref{sdpnuclear} are indeed equivalent. Let us moreover make the following important observation. Suppose that $Z$ is optimal for \eqref{eqn:min_comp_nuc}. Let $Z=U\Sigma V^T$ be a compact SVD of $Z$ and set $A:=U\Sigma U^T$ and $B:=V\Sigma V^T$. Then the triple $(Z,A,B)$ is feasible for \eqref{sdpnuclear}  since 
$$Y=\begin{bmatrix}
		A & Z \cr Z^T & B
	\end{bmatrix} = \begin{bmatrix} U\\ V\end{bmatrix}\Sigma\begin{bmatrix} U\\ V\end{bmatrix}^T\succeq 0.$$ 
Moreover $\rank(Y)=\rank(Z)$ and $\frac{1}{2}\trace(Y)=\frac{1}{2}(\trace(A)+\trace(B))=\trace(\Sigma)=\|Z\|_*$. Thus $Y$ is optimal for \eqref{sdpnuclear}.


Let us see now how we can exploit the structure and target rank $r$ of the 
problem to find an \emph{exposing vector} of a face containing an 
optimal solution
of the \SDPp. Fix two numbers $p,q> r$ and let $\alpha$ be any $p\times q$ 
complete bipartite subgraph  of 
$G$. Let also $z[\alpha]$ be the restriction of $z$ to $\alpha$. Thus   $z[\alpha]$ corresponds to a fully specified submatrix.



For almost any\footnote{This is in the sense of Lebesgue measure on the factors $P\in\R^{m\times r}$, $Q\in\R^{n\times r}$ satisfying $Z=PQ^T$.} rank $r$ underlying matrix $Z$, it will be the case that $\rank(z[\alpha]) =r$.
\index{specified submatrix, $z \in \R^{p \times q}$}
 Without loss of generality,
 after row and column permutations if needed, we can assume that $\alpha$ encodes the bottom left corner of $Z$: 
\[
	Z=
\begin{bmatrix}  Z_1  & Z_2     \cr 
		       z[\alpha] & Z_3 \end{bmatrix},
\]
that is
 $\alpha=\{m-p+1,\ldots, m\}\times\{m+1,\ldots,m+q\}$.
Form now the factorization $z[\alpha]=\bar P\bar D\bar Q^T$ obtained using
the compact SVD. Both $\bar P,\bar Q$ have rank $r$.

Let $Z=U\Sigma V^T$ be a compact SVD of $Z$ and define
$$Y=\begin{bmatrix} U\\ V\end{bmatrix}\Sigma\begin{bmatrix} U\\ V\end{bmatrix}^T.$$
As we saw previously, $Y$ is optimal for the SDP \eqref{sdpnuclear}.
Subdividing $U$ and $V$ into two blocks each, we deduce
\begin{equation}
	\label{eq:Ypartit}
0\preceq Y 
= 
\begin{bmatrix} U_1 \cr U_2 \cr V_1  \cr V_2 \end{bmatrix}
	\Sigma
\begin{bmatrix} U_1 \cr U_2 \cr V_1  \cr V_2 \end{bmatrix}^T
= 
\left[
\begin{array}{c|cc|c}  
	U_1\Sigma U_1 & U_1 \Sigma U_2^T& U_1 \Sigma V_1^T& U_1\Sigma V_2^T     \cr 
	\hline
	U_2\Sigma U_1^T & U_2\Sigma U_2^T& U_2\Sigma V_1^T& U_2\Sigma V_2^T     \cr 
	V_1\Sigma U_1^T & V_1\Sigma U_2^T& V_1\Sigma V_1^T& V_1\Sigma V_2^T     \cr 
	\hline
	V_2\Sigma U_1^T & V_2\Sigma U_2^T& V_2\Sigma V_1^T& V_2\Sigma V_2^T     \cr 
		       \end{array}
\right].
\end{equation}
Therefore, we conclude that $z[\alpha]=U_2\Sigma V_1^T = \bar P\bar D\bar Q^T$. Taking into account that $z[\alpha]$ has rank $r$ and the matrices $U_2,V_1,\bar P, \bar Q$ have exactly $r$ columns we deduce
\begin{equation}
	\label{eq:PQbar}
\begin{array}{c}
\Range(z[\alpha]) = \Range(U_2) = \Range(\bar{P}), \\
\Range(z[\alpha]^T) = \Range(V_1) = \Range(\bar{Q}).
\end{array}
\end{equation}

We can now use the \textdef{exposing vector} form of \FR
formed from $\bar P$ and/or $\bar Q$. Using the calculated $\bar P,\bar
Q$, let $\bar E\in \R^{p\times (p-r)}$ and $\bar F\in \R^{q\times (q-r)}$ satisfy  $\Range \bar E=(\Range \bar P)^{\perp}$ and $ \Range \bar F=(\Range \bar Q)^{\perp}$.
Define then the PSD matrix 
\[
\overline W=\left[
\begin{array}{c|cc|c}  
	0 & 0 & 0 & 0      \cr 
	\hline
	0  & \bar E \bar E^T & 0 & 0      \cr 
	0  & 0 & \bar F \bar F^T & 0      \cr 
	\hline
	0  &0 & 0 & 0      \cr 
		       \end{array}
\right] \succeq 0.
\]
By construction $\overline W Y=0$. Hence $\overline W$ exposes a face of the PSD cone containing the optimal $Y$.

Performing this procedure for many specified submatrices $z\in \R^E$, can yield a dramatic decrease in the dimension of the final SDP that needs to be solved. When noise is present, the strategy can be made robust in exactly the same way as for the EDM problem in Section~\ref{sect:edmsnlnoisy}.

We include one of the
tables of numerics from~\cite{HuangWolkXYe:16} in 
Table~\ref{table:noiseless4}, page \pageref{table:noiseless4}.
Results are for the average of five instances. We have recovered the
correct rank $4$ each time without calling an SDP solver at all. Note that the largest matrices recovered
have $(2,500)\times (20,000)=50,000,000$ elements.
	\begin{table*}[!h]
	\centering
	\caption{noiseless: $r=4$;
		$m \times n$ size; density $p$.}
\vspace{-.1in}
		\label{table:noiseless4}
		\begin{tabular}{|ccc||c|c|c|} \hline
\multicolumn{3}{|c||}{Specifications} & \multirow{2}{*}{Time (s)} & \multirow{2}{*}{Rank} &\multirow{2}{*}{Residual (\%$Z$)}\cr\cline{1-3}
  $m$ &   $n$ & mean($p$) &        &         &           \cr\hline
  700 &  2000 &  0.36 &  12.80 &     4.0 & 1.5217e-12 \cr\hline
 1000 &  5000 &  0.36 &  49.66 &     4.0 & 1.0910e-12 \cr\hline
 1400 &  9000 &  0.36 & 131.53 &     4.0 & 6.0304e-13 \cr\hline
 1900 & 14000 &  0.36 & 291.22 &     4.0 & 3.4847e-11 \cr\hline
 2500 & 20000 &  0.36 & 798.70 &     4.0 & 7.2256e-08 \cr\hline
\end{tabular}

\end{table*}

\section{Commentary}
The work using chordal graphs for \PSD completions was done in
\cite{GrJoSaWo:84} and extended the special case of banded structure
in~\cite{DymGoh:81}. Surveys for matrix completion are given in
e.g.,~\cite{MR1059481,MR1059486,AlWo:99,MR2014037,MR2487961,MR2508310,MR1044604,joh:90}.
A survey specifically related to chordality is given in
\cite{VandenbergeAndersen:15}.
More details on early algorithms for \PSD completion
are in e.g.,~\cite{JoKrWo:95,AlWo:99}.

The origin of distance geometry problems can be traced back to the work
of Grassmann in 1896~\cite{MR1747519}. More recent work appeared in
e.g.,~\cite{MR86j:62133,MR83a:51024,MR975025,MR783122,MR89c:51020}. 
Many of
these papers emphasized the relationships with molecular conformation.
Chordality and relations with positive definiteness are studied in
\cite{MR1340702} and more recently in~\cite{Laur:97b,MR99c:05135}.
The book~\cite{MR2807419} has a chapter on matrix completions with
the connections to \EDM completions, see also~\cite{MR1366579} for the
relations with faces. An excellent online reference for \EDM is the book by Dattorro
\cite{dattorroonline:05}. In addition, there are many excellent survey articles,
e.g.,~\cite{KrislockWolk:10,DBLP:journals/corr/DokmanicPRV15,surv,AlfakihAnjosKPW:08,MR3059590}.
The survey~\cite{2016arXiv161000652L} contains many open problems in 
\EDMC and references for application areas.

Early work using \SDP interior point algorithms for  \EDM completion problems  is given in~\cite{AlKaWo:97}. 
Exploiting the clique structure for \SNL type problems is done in
e.g.,~\cite{DiKrQiWo:08,krislock:2010,kriswolk:09}. The improved robust
algorithm based on averaging approximate exposing vectors was developed
in \cite{ChDrWo:14}, while a parallel viewpoint based on rigidity theory
was developed in \cite{face_sing}. In fact, a parallel view on facial
reduction is based on rigidity theory,
e.g.,~\cite{GortlerThurston:14,MR3341582,Alfak:00,MR3216675}. 
The facial structure for the \EDM cone is studied in
e.g.,~\cite{MR2209243,MR2166851}.
Applications of the technique to
molecular conformation are in \cite{MR3042029}. 

The \LRMC problem has parallels in the compressed sensing 
framework that is currently of great interest. 
The renewed interest followed the work
in~\cite{fazelhindiboyd:01,Fazel:02,MR2565240,Rechtparrilofazel} that
used the nuclear norm as a convex relaxation of the rank function.
Exploiting the structure of the optimal face using \FR is introduced recently
in~\cite{HuangWolkXYe:16}.
An alternative approach, which applies much more broadly, is described in~\cite{2016arXiv160802090P}.


\chapter{Hard combinatorial problems}
\label{chap:hardcomb}

\section{Quadratic assignment problem, \QAPp}
\label{sect:qap}
The \textdef{quadratic assignment problem, \QAPp}, is arguably the hardest of the
\index{\QAPp, quadratic assignment problem}
so-called NP-hard combinatorial optimization problems. The problem can
best be described in terms of facility location. We have $n$ given facilities
that need to be located among $n$ specified locations. 
As input data, we have information on the
distances $D_{ij}$ between pairs of locations $i,j$ and the flow values
(weights) $F_{st}$ between pairs of facilities $s,t$. 
The (quadratic) cost of a possible location is the flow between each pair 
of facilities multiplied by the distance between their assigned locations. Surprisingly,
problems of size $n\geq 30$ are still considered hard to solve.
As well, we can have a (linear)
cost $C_{kl}$ of locating facility $k$ in location $l$. 
The unknown variable that decides which facility goes into which location is
an $n\times n$ \textdef{permutation matrix $X=(X_{kl})\in \Pi$} with
\[
	X_{kl}=\left\{ \begin{array}{rl} 
1 & \text{if facility  } k \text{ is assigned to location  } l \\
		0 & \text{otherwise}
	\end{array} \right..
\]
This problem has the elegant trace formulation
\[
\min_{X\in \Pi}\, \trace(FXDX^T) + \trace (CX^T).
\]
Notice that the objective is a quadratic function, and typically the
quadratic form, $\trace (FXDX^T)$, is indefinite.\footnote{One can
perturb the objective function by exploiting the structure of
the permutation matrices and obtain positive definiteness of the
quadratic form. However, this
can result in deterioration of the bounds from any relaxations.}

Notice also that the feasible region consists of permutation matrices, a discrete set.
There is a standard strategy for forming a semi-definite programming relaxation for such a problem. 
Consider the \textdef{vectorization $x:=\kvec(X)\in \R^{n^2}$} and define the 
\textdef{lifting} to the rank one \emph{block matrix} 
$$
Y
=
\begin{pmatrix} 1 \\	x \end{pmatrix}
\begin{pmatrix} 1 \\	x \end{pmatrix}^T
=
\begin{bmatrix}
1& x^T \\	
x& xx^T
\end{bmatrix} 
=
\begin{bmatrix} 
	1 & x^T \cr
	x & \begin{bmatrix}
	    X_{:,i} X_{:,j}^T
            \end{bmatrix}
\end{bmatrix} 
\in \Sntop,
$$
where the matrix consists of $n^2, n\times n$ blocks 
$X_{:,i} X_{:,j}^T$ beginning in row and column $2$.
The idea is then to reformulate the objective and a relaxation of the feasible region linearly in terms of $Y$, and then simply insist that $Y$ is PSD, though not necessarily rank one.
In particular, the objective function can easily be rewritten as a {\em linear function} of $Y$, namely $\trace(LY)$, where 
$$
L:=\begin{bmatrix}
	0 & \frac 12 \kvec(C)^T \cr \frac 12\kvec(C) & D\otimes F
\end{bmatrix},
$$
and we denote the \textdef{Kronecker product, $D\otimes F$}.
\index{$D\otimes F$, Kronecker product}

Next we turn to the constraints. We seek to replace the set of permutation matrices by more favorable constraints that permutation matrices satisfy.
For example, observe that the permutation matrices are doubly stochastic
and hence the row sums and column sums are one, yielding the following
\textdef{linear assignment constraints}
\begin{equation}
	\label{eq:Xee}
	Xe=X^Te=e.
\end{equation}
There are of course many more possible constraints one can utilize; the
greater their number, even if redundant, the tighter the \SDP relaxation in general. Some 
prominent ones, including the ones above, are 
\begin{subequations}
		\label{eq:consrqap}
	\begin{align}
		Xe=X^Te&=e \label{eqn:ones}\\
		X_{ij}^2 - X_{ij} &=0, \label{eqn:zeroone}\\
		XX^T = X^TX&=I, \label{eqn:orthog}\\
		X(:,i)\circ X(:,j)&=0, 
	X(k,:)\circ X(l,:)=0, ~ \textrm{ for all } i\neq j, k\neq l,
		 \label{eqn:gang}
\end{align}
\end{subequations}
where \textdef{$\circ$} denotes the \textdef{Hadamard (elementwise) product}.
Note that including both equivalent orthogonality constraints $XX^T = X^TX=I$ 
is not redundant in the relaxations.

Let us see how to reformulate the constraints linearly in $Y$. We first consider the $n$ linear row sum
constraints $(Xe-e)_i=0$ in \eqref{eqn:ones}. To this end, observe
\[
\begin{array}{rcl}
0
&=&
(Xe-e)_i
\\&=&
e_i^T(Xe-e) 
\\&=&
\trace (X^Te_ie^T)-1

\\&=&
\begin{pmatrix} 1 \cr x \end{pmatrix}^T
\begin{pmatrix} -1 \cr \kvec(e_ie^T) \end{pmatrix}.
\end{array}
\]
We obtain 
\[
	0=
\begin{pmatrix} 1 \cr x \end{pmatrix}
\begin{pmatrix} 1 \cr x \end{pmatrix}^T
\begin{pmatrix} -1 \cr \kvec(e_ie^T) \end{pmatrix}
\begin{pmatrix} -1 \cr \kvec(e_ie^T) \end{pmatrix}.
\]
Defining now the matrix 
$$E_r=\begin{bmatrix}
-1 & -1  &\ldots & -1\\
\kvec(e_1e^T) &\kvec(e_2e^T) &\ldots & \kvec(e_ne^T)
\end{bmatrix},
$$
we obtain the equivalent linear homogeneous constraint $\trace(YE_rE_r^T)=0$. 
Similarly the $n$ linear column sum
constraints $(X^Te-e)_i=0$ amount to the equality
$\trace(YE_cE_c^T)=0$, where 
$$E_c=\begin{bmatrix}
-1 & -1  &\ldots & -1\\
\kvec(ee_1^T) &\kvec(ee_2^T) &\ldots & \kvec(ee_n^T)
\end{bmatrix}.
$$
Thus the feasible region of the \SDP relaxation lies in the face of $\Sntop$ exposed by $D_0:=E_rE_r^T+E_cE_c^T$. Henceforth, let \textdef{$\widehat V$} be full column rank and satisfying 
$\range (\widehat V) =\nul(D_0)$. 


The other three constraints in \eqref{eq:consrqap} can be rephrased linearly in $Y$ as well: 
\eqref{eqn:zeroone} results in the so-called
\textdef{arrow constraint} (the first row (column) and the diagonal of $Y$ are equal);
the constraints \eqref{eqn:orthog} yield the 
\textdef{block-diagonal} constraint (diagonal blocks sum to the identity matrix) and the 
\textdef{off-diagonal} contraint (the traces of the off-diagonal blocks are zero); and the
Hadamard orthogonality constraints \eqref{eqn:gang} are called the \textdef{gangster
constraints} and guarantee that the diagonal blocks are diagonal
matrices and the diagonals of the off-diagonal blocks are zero.
We omit further details\footnote{See more details in~\cite{KaReWoZh:94}.}
but denote the resulting constraints with the additional
$Y_{00}=1$ in the form $\A(Y)=b$. 
We note that the transformation $\A$ without the Hadamard orthogonality
constraints is onto while $\A$ is not. We numerically test both settings with and
without the gangster constraints and together with and
without facial reduction below in this section. 

Now the standard relaxation of the problem is obtained by letting $Y$ be a 
positive semi-definite matrix with no constraint on its rank: 
\begin{equation}
	\label{eq:firstmainqaprelax}
	\begin{array}{rl}
\min~ &\trace(LY)\\
\textrm{s.t. }~& \mathcal{A}(Y)=b\\
&Y\in \Sc^{n^2+1}_+.
\end{array}
\end{equation}
All in all, the number of linear constraints is 
\[
m_A=1+n+(n(n-1)/2)+n^2+2(n(n(n-1)/2)) +1,
\]
i.e.,
\[
	\A: \Sntop \rightarrow  \R^{\scriptsize{
	(n^3+\frac {n^2}2 + \frac n2 +2)}}.
\]

As discussed above, the matrix $D_0$ certifies that this relaxation fails strict feasibility. 
Indeed the entire feasible region
lies in the face of $\Sc^{n^2+1}_+$ exposed by $D_0$.
Surprisingly, after restricting to the face exposed by  $D_0$, the constraints $\mathcal{A}(Y)=b$ simplify dramatically. The resulting equivalent formulation becomes
\begin{equation}
	\label{eq:gangsterqap}
\begin{array}{rl}
\min  & \trace (\widehat V^TL\widehat V R) \\
\text{s.t.} & \GG(\widehat V R \widehat V^T) = e_0e_0^T \\
				 & R \in \Sc_+^{(n-1)^2+1},
\end{array}
\end{equation}
where $e_0$ is the first unit vector, as we start indexing at $0$, and
\[
\GG(Y)_{ij}=\left\{
\begin{array}{ll}
1 & \text{if  } ij \in \bar \JJ\\
0 & \text{otherwise  }
\end{array}
\right.
\]
and $\bar \JJ$ is an appropriately defined index set; see~\cite{KaReWoZh:94}.
Roughly speaking, this index set guarantees that the diagonal blocks of 
$Y=\widehat V R \widehat V^T$ are diagonal matrices and the diagonal
elements of the off-diagonal blocks of $Y$ are all zero. 
In particular, one can show that the resulting linear constraint is surjective.

In fact, the \textdef{gangster operator} and gangster constraints
guarantee that most of the
Hadamard product constraints in \eqref{eqn:gang} hold.
And the constraints corresponding to the linear constraints in
\eqref{eqn:ones}, the arrow constraint in \eqref{eqn:zeroone}, 
the block-diagonal and off-diagonal constraints in \eqref{eqn:orthog} 
and some of the gangster constraints in \eqref{eqn:gang} have all
become redundant, thereby illuminating the strength of the facial
reduction together with the gangster constraints \eqref{eqn:gang}.

Moreover, we can rewrite the linear constraints in
\eqref{eq:gangsterqap} as
\[
\trace \left(Y\left(
\widehat V_{j:}^T\widehat V_{i:}+ 
\widehat V_{i:}^T\widehat V_{j:}\right)\right)=0, \quad \forall ij
	\in \bar \JJ.
\]
We see that these \emph{low rank}\footnote{These constraints are rank
	two. Low rank constraints can be exploited in several of the
current software packages for \SDPp.} constraints are linearly independent
and the number of constraints has been
reduced from $m_A=n^3+\frac {n^2}2 + \frac n2 +2$ to
\[
	|\bar \JJ|=
	1+  n(n(n-1)/2) + n(n(n-1)/2 - (n-1) -1)
	=n^3-2n^2+1,
\]
i.e.,~the number of constraints is still $O(n^3)$ but has decreased by
${1+\frac {5n^2+n}2}$.

Finally, we should mention that the singularity degree of the \SDP relaxation
of the \QAP is $d=1$. The problem \eqref{eq:gangsterqap} has a strictly
feasible point $\hat R$. Moreover, one can show that the dual of \eqref{eq:gangsterqap}
also has a strictly feasible point,
see~\cite{KaReWoZh:94,OliveiraWolkXu:15}.

Let us illustrate empirically the improvement in accuracy and cputime
for the facially reduced \SDP relaxation of the QAP.
We use the model in
\eqref{eq:firstmainqaprelax} and compare it to the simplified 
facially reduced model in \eqref{eq:gangsterqap}. See Figure
\ref{fig:cputimes}, page \pageref{fig:cputimes}, and Figure \ref{fig:acc},
page \pageref{fig:acc}. The improvement in accuracy and cputime is
evident.
\begin{figure}[!ht]
\centering
    \includegraphics[scale=0.67]{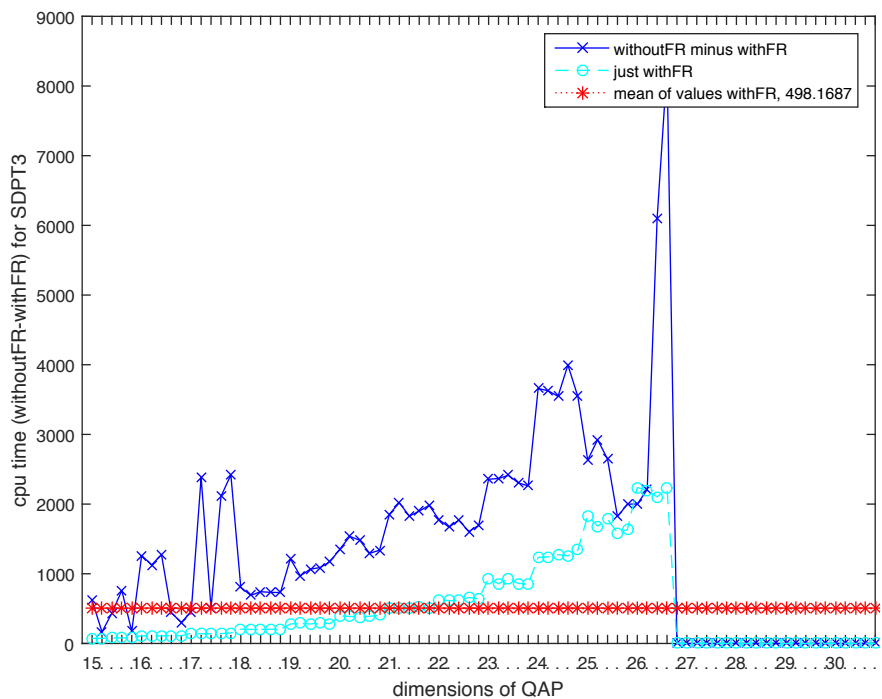}
    \caption{Difference in cpu seconds 
    (without \FRp $-$ with \FRp)}
    \label{fig:cputimes}
\end{figure}
\begin{figure}[!ht]
\centering
    \includegraphics[scale=0.67]{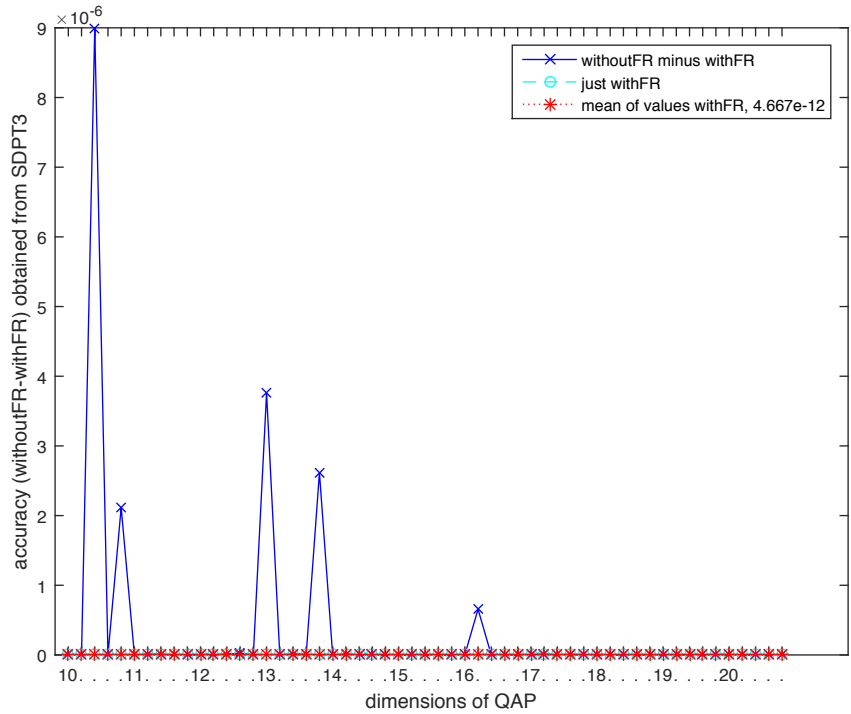}
    \caption{Difference in accuracy values (without \FRp $-$ with
    \FRp)}
    \label{fig:acc}
\end{figure}

\section{Second lift of Max-Cut}
\label{sect:secliftMC}

Recall that for a given weighted undirected graph $G=(V,E,W)$, 
the maximum cut problem is to determine a vertex set $S$ such that the total 
weight of the edges between $S$ and its complement $S^c$ is as 
large as possible. Thus enumerating the vertices $V=\{1,\ldots, n\}$, 
we are interested in the problem
\[
	(\MCp) \qquad \begin{array}{rl}
\max &\frac{1}{2} \sum_{i <j} w_{ij} (1-x_ix_j)\\
\textrm{s.t. } & x_i\in \{\pm 1\}, \qquad\textrm{ for } i=1,\ldots,n.
\end{array}
\]
Here, we have $x_i=1$ for $i\in S$ and $x_i=-1$ for $i\notin S$. Notice
the constraints $x_i\in \{\pm 1\}$ can equivalently be written with the
quadratic constraint 
\begin{equation}
	\label{eq:diagq}
	\diag(xx^T)=e.
\end{equation}
Relaxing $xx^T$ to a positive
semi-definite matrix $X$, we arrive at the celebrated 
SDP relaxation of Max-Cut:
\begin{equation}\label{eqn:orign_relax}
\begin{array}{rl}
\max & \frac{1}{4}\trace(LX)\\
\textrm{s.t. } & \diag(X)=e\\
& X\succeq 0
\end{array}
\end{equation}
Here $L$ denotes the weighted Laplacian matrix of the graph, which will
not play a role in our discussion. This \SDP is clearly strictly feasible. 

Another idea now to improve the accuracy of the relaxation is to ``extend the lifting''. Namely, with the goal of
tightening the approximation to the original Max-Cut problem, we can
certainly \emph{add} the following quadratic constraints to the \SDP relaxation:
\begin{equation}\label{eqn:nonlin_constr}
X_{ij}X_{jk}=X_{ik}, \qquad \textrm{ for all } i,j,k.
\end{equation}
Let us see how to form a relaxation with these nonlinear constraints.

For $X\in \Sn$, let \textdef{$\svec(X)$} denote the vector formed from the upper
triangular part of $X$ taken columnwise with the strict upper triangular
part multiplied by $\sqrt 2$.
By abuse of notation, we let $x=\svec(X)$ and define the matrix $Y=
\begin{pmatrix} 1 \\	x \end{pmatrix}
\begin{pmatrix} 1 \\	x \end{pmatrix}^T$.
We can now form a new \SDP relaxation by insisting that $Y$ is PSD (though not rank one) and rewriting the constraints linearly in $Y$. The nonlinear constraints 
\eqref{eqn:nonlin_constr} can indeed be written linearly in $Y$; we omit the details.  On the other hand, note that the $i$-th constraint in the original \SDP relaxation \eqref{eqn:orign_relax} is equivalent to
\[
	\begin{array}{rcl}
	0 
	&=& 
	  \langle e_i,\diag(X)-e \rangle 
	\\&=& 
	  \langle \Diag(e_i),X \rangle -1
	  \\&=& x^T \svec(\Diag(e_i)) -1.
  \end{array}
\]
Exactly, the same way as in Section \ref{sect:qap}, we can define the matrix
$$E:=\begin{bmatrix}
-1 & -1  &\ldots & -1\\
	\svec(\Diag(e_1)) &\svec(\Diag(e_2)) &\ldots & \svec(\Diag(e_n))
\end{bmatrix},
$$
which certifies that strict feasibility fails and that the entire
feasible region lies in the face of 
$\Sc^{\frac{n(n+1)}{2}+1}_+$ exposed by $EE^T$. It turns out that this
second lift of max-cut, in practice, provides much tighter bounds than
the original \SDP relaxation \eqref{eqn:orign_relax}, and the elementary facial reduction step using $EE^T$ serves to stabilize the problem.

\section{General semi-definite lifts of combinatorial problems}
\label{sect:sdplifts}
Let us next look at a general recipe often used for obtaining \SDP relaxations of NP-hard problems; elements of this technique were already used in the previous sections. Consider a nonconvex feasible region of the form
$$\mathcal{F}:=\left\{x\in\R^n: \mathcal{A}\begin{pmatrix} 
1 & x^T\\
x & xx^T
\end{pmatrix}=0\right\}$$
where $\mathcal{A}\colon\Sc^n\to \R^m$ is a linear transformation. An \SDP relaxation of this region is the set
$$\widehat{\mathcal{F}}= \left\{X\in\Sc^n_+: \mathcal{A}(X)=0,\, X_{11}=1 \right\}.$$
Indeed, $\mathcal{F}$ is the image of a linear projection of the intersection of 
$\widehat{\mathcal{F}}$ with rank one matrices. For this reason
$\widehat{\mathcal{F}}$ is often called an {\em \SDP lift} of $\mathcal{F}$. 

In applications, such as the ones in the previous sections, the affine hull of $\mathcal{F}$ may not be full dimensional. For example, the affine hull of the set of permutation matrices (used for QAP) has empty interior. To this end, suppose that the affine hull of $\mathcal{F}$ is given by $\{x:Lx=l\}$, where $L$ is a linear transformation and $l$ a vector. Define the matrix 
$\widehat{L}=\begin{bmatrix} 
-l & L\\
\end{bmatrix}$.
Then clearly there is no harm in including the redundant constraint 
\begin{equation}\label{eqn:redund}
\Big\langle\widehat{L}^T\widehat{L},\begin{pmatrix} 
1 & x^T\\
x & xx^T
\end{pmatrix}\Big\rangle=0
\end{equation}
in the very definition of $\mathcal{F}$. Notice then
$\widehat{\mathcal{F}}$ is clearly contained in the face of $\Sc^n_+$ exposed by $\widehat{L}^T\widehat{L}$.
Indeed, this is the minimal face of $\Sc^n_+$ containing $\widehat{\mathcal{F}}$. To see this, suppose that the affine span of $\mathcal{F}$ has dimension $d$, and consider any affinely independent vectors $x_1,\ldots, x_{d+1}\in\mathcal{F}$. Then the vectors 
$\begin{pmatrix} 
1 \\
x_1
\end{pmatrix}, \begin{pmatrix} 
1 \\
x_2
\end{pmatrix}, \ldots, \begin{pmatrix} 
1 \\
x_{d+1}
\end{pmatrix}$ are linearly independent, and therefore the barycenter
$$\frac{1}{d+1}\sum^{d+1}_{i=1}  \begin{pmatrix} 
1 & x_i^T\\
x_i & x_ix_i^T
\end{pmatrix}$$ is a rank $d+1$ matrix lying in $\widehat{\mathcal{F}}$.
On the other hand, it is immediate that the face of $\Sc^n_+$ exposed by
$\widehat{L}^T\widehat{L}$ also has dimension $d+1$. The claimed
minimality follows. It is curious to note that if the constraint
\eqref{eqn:redund} were not explicitly included in the definition
$\mathcal{F}$, then the \SDP lift $\widehat{\mathcal{F}}$ could nevertheless be strictly feasible, and hence unnecessarily large. 

\begin{example}[Strictly feasible \SDP lifts]
Consider the region:
	$$\left\{(x,y,z) \in\R^3: x^2=y^2=z^2=1,\quad xy+yz=0\right\}.$$
There are only four feasible points, namely $\left\{\pm\begin{pmatrix} 
1 \\
1\\
-1
\end{pmatrix}, \pm\begin{pmatrix} 
1 \\
-1\\
-1
\end{pmatrix}\right\}$, and they affinely span the two dimensional subspace perpendicular to the vector
	$ \begin{pmatrix} 1&0&1\end{pmatrix}^T $. If this constraint is not included explicitly, then the \SDP lift is given by 
$$\{X\in\Sc^4_+: X_{23}=-X_{34}, ~X_{ii}=1 \textrm{ for } i=1,\ldots,4\}.$$
In particular, the identity matrix is feasible. 
\end{example}



\section{Elimination method for sparse SOS polynomials}
Checking whether a polynomial is always nonnegative is a ubiquitous task
in computational mathematics. This problem is NP-hard, as it for example
encompasses a great variety of hard combinatorial problems. Instead a
common approach utilizes sum of squares formulations. Indeed, checking
whether a polynomial is a sum of squares of polynomials can be modeled
as an \SDP. A certain hierarchy of sum of squares problems
\cite{Lasserre01globaloptimization,PP_hiarchy} can then be used to
determine the nonnegativity of the original polynomial. The size of the
\SDP arising from a sum of squares problem depends on the number of
monomials that must be used in the formulation. In this section, we show
how facial reduction iterations on the cone of sums of squares
polynomials can be used to eliminate monomials yielding a smaller and
better conditioned equivalent \SDP formulation.
A rigorous explanation of the material in this section requires some heavier notation; therefore we only outline the techniques.

Let $\R[x]_{n,2d}$ denote the vector space of polynomials in $n$ variable with real coefficients of degree at most $2d$.  We will write a polynomial $f\in \R[x]_{n,2d}$ using multi-index notation
$$f(x)=\sum_{\alpha\in N} c_{\alpha}x^{\alpha},$$
where $N$ is some subset of $\mathbb{N}^n$, we set $x^{\alpha}=x_1^{\alpha_1}\cdots x_n^{\alpha_n}$, and $c_{\alpha}$ are some real coefficients. We will think of $\R[x]_{n,2d}$ as a Euclidean space with the inner product being the usual dot product between coefficient vectors.
 Let  $\Sigma_{n,2d}\subseteq \R[x]_{n,2d}$ be the set of polynomials $f\in \R[x]_{n,2d}$ that are sums of squares, meaning that $f$ can be written as $\sum_i f_i^2$ for some polynomials $f_i$.  
 Clearly $\Sigma_{n,2d}\subseteq \R[x]_{n,2d}$ is a closed convex cone, often called the SOS cone. 
 
 A fundamental fact is that membership in the SOS cone $\Sigma_{n,2d}$
can be checked by solving an \SDP. 
  \begin{theorem}
 Fix a set of monomials $M\subset \mathbb{N}^n$. 
Then a polynomial $f\in \R[x]_{n,2d}$ is a sum of squares of polynomials over the monomial set $M$ if and only if there exists a matrix $Q\succeq 0$ so that $f(x)=[x]_{M}^TQ[x]_M$, where $[x]_M$ is a vector of monomials in $M$.
\end{theorem}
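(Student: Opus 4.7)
The plan is to prove both directions by exhibiting an explicit correspondence between PSD Gram matrices $Q$ and SOS decompositions of $f$ with squared polynomials supported on $M$. The key observation is that any polynomial $g$ supported on $M$ has the form $g(x) = v^T [x]_M$ for some unique coefficient vector $v \in \R^{|M|}$, so squaring yields $g(x)^2 = [x]_M^T (vv^T) [x]_M$. Thus each ``square'' contributes a rank-one PSD Gram matrix, and a sum of squares contributes a sum of rank-one PSD matrices, which is exactly the set of PSD matrices.

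For the forward direction, suppose $f = \sum_{i=1}^k f_i^2$ with each $f_i$ supported on $M$. Let $v_i$ be the coefficient vector of $f_i$ relative to $[x]_M$, so $f_i(x) = v_i^T [x]_M$. Then
\[
f(x) = \sum_{i=1}^k \bigl(v_i^T [x]_M\bigr)^2 = \sum_{i=1}^k [x]_M^T (v_i v_i^T) [x]_M = [x]_M^T Q [x]_M,
\]
where $Q := \sum_{i=1}^k v_i v_i^T \succeq 0$, since each $v_i v_i^T$ is PSD and sums of PSD matrices are PSD.

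For the reverse direction, suppose $Q \succeq 0$ satisfies $f(x) = [x]_M^T Q [x]_M$. Using a spectral (or Cholesky) factorization, write $Q = \sum_{i=1}^{r} v_i v_i^T$, where $r = \rank Q$ and $v_i \in \R^{|M|}$. Defining the polynomials $f_i(x) := v_i^T [x]_M$, each of which is supported on $M$, we obtain
\[
f(x) = [x]_M^T \Bigl(\sum_{i=1}^r v_i v_i^T\Bigr) [x]_M = \sum_{i=1}^r (v_i^T [x]_M)^2 = \sum_{i=1}^r f_i(x)^2,
\]
exhibiting $f$ as a sum of squares over $M$.

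There is essentially no hard step: the proof is a direct translation between two well-known parametrizations of the PSD cone, namely as Gram matrices of a tuple of vectors and as sums of rank-one PSD matrices. The only mild care needed is to fix clearly what ``SOS over the monomial set $M$'' means (each summand $f_i$ has coefficient support contained in $M$), so that the coefficient vectors $v_i$ live in $\R^{|M|}$ and the identification with $|M| \times |M|$ PSD Gram matrices is unambiguous.
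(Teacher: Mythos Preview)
Your proof is correct and is essentially identical to the paper's own argument: the paper packages the coefficient vectors $v_i$ as the rows of a matrix $P$ and writes $Q=P^TP$, which is exactly your $Q=\sum_i v_iv_i^T$, and for the converse it factors $Q=P^TP$ and reads off the $f_i$ from the rows of $P$, matching your rank-one decomposition step.
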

\begin{proof}
 If $f$ is a sum of squares $f=\sum_{i} f_i^2$, then we can form a matrix $P$ whose rows are the coefficient vectors of $\{f_i\}_i$. Then $Q=P^TP$ is the PSD matrix we seek. Conversely, given a PSD matrix $Q$ satisfying $f(x)=[x]_{M}^TQ[x]_M$, we can form a factorization $Q=P^TP$, and read off the coefficients of each polynomial $f_i$ from the rows of $P$. 
 \end{proof}
 
 Notice that  the relation $f=[x]_{M}^TQ[x]_M$ can be easily rewritten as a linear relation on $Q$ by matching coefficient of the left and right-hand-sides. The size of $Q$ is completely dictated by the number of monomials.
 
 More generally, instead of certifying whether a polynomial is SOS, we mught be interested in minimizing a linear functional over an affine slice of the SOS cone. More precisely, consider a problem of the form:
\begin{equation}\label{eqn:SOS}\begin{aligned}
\min_{u\in\R^m}&~ \sum_{i=1}^m w_iu_i\\
\textrm{s.t. } &f=g_0+\sum_{i=1}^m u_ig_i\\
&f\in \Sigma_{n,2d}
\end{aligned}
\end{equation}
where $u\in \R^m$ is the decision variable, $g_i\in \R[x]_{n,2d}$ are specified polynomials and $w\in\R^m$ is a fixed vector.
Clearly this problem can be converted to an \SDPp. The size of the
decision matrix $X$ is determined by the number of monomials. Parsing
algorithms attempt to choose a (small) set of monomials $M$ so that {\em
every} feasible $f$ for \eqref{eqn:SOS}  can be written as a sum of
squares over the monomial set $M$, thereby decreasing the size of the
\SDPp. Not surprisingly, some parsing strategies can be interpreted as facial reduction iterations on \eqref{eqn:SOS}.

We next outline such a strategy closely following \cite{perm_sos}. To this end, we must first explain which faces of the SOS cone $\Sigma_{n,2d}$ correspond to eliminating monomials. Indeed, there are faces of $\Sigma_{n,2d}$ that do not have such a description.

To answer this question, we will need extra notation. 
Henceforth, fix a set of monomials $M\subseteq\mathbb{N}^n$ and set $d:=\max\{\sum_{i=1}^n z_i: z\in M\}$ to be the maximal degree of monomials in $M$. Let 
$\Sigma(M)$ be the set of all polynomials that can be written as sums of squares over the monomial set $M$. Finally, set $M^+$ to be the set of points in $M$ that are not midpoints of any points in $M$, namely
$$M^+:=M\setminus\left\{\frac{\alpha+\beta}{2}: \alpha,\beta\in M \textrm{ and }\alpha\neq\beta\right\}.$$

Let us now look two types of faces that arise from elimination of monomials. 
\begin{theorem}[Type I face]
If equality, $$\conv(M)\cap \mathbb{N}^n=M$$ holds, then  $\Sigma(M)$ is a face of $\Sigma_{n,2d}$.
\end{theorem}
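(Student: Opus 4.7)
The plan is to show the facial property directly: if $f \in \Sigma(M)$ decomposes as $f = g+h$ with $g,h \in \Sigma_{n,2d}$, then both $g$ and $h$ lie in $\Sigma(M)$. The Newton polytope of an SOS polynomial will be the key tool. Recall that for any polynomial $p$, $\mathrm{Newton}(p)$ denotes the convex hull of its support in $\mathbb{N}^n$.

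First I would observe that since $f \in \Sigma(M)$ admits a representation $f = \sum_k f_k^2$ with $\mathrm{supp}(f_k) \subseteq M$ for every $k$, its support lies in the Minkowski sum $M+M$. Taking convex hulls yields
\[
\mathrm{Newton}(f) \;\subseteq\; \conv(M+M) \;=\; 2\,\conv(M).
\]
Next, writing out SOS decompositions $g = \sum_i g_i^2$ and $h = \sum_j h_j^2$, combining them gives an SOS decomposition of $f = g+h$. Hence $\mathrm{Newton}(g) \subseteq 2\conv\!\bigl(\bigcup_i \mathrm{supp}(g_i) \cup \bigcup_j \mathrm{supp}(h_j)\bigr) = \mathrm{Newton}(f)$, and similarly for $h$. (The point here is that for an SOS polynomial $p = \sum p_\ell^2$, one has the identity $\mathrm{Newton}(p) = 2\,\conv\bigl(\bigcup_\ell \mathrm{supp}(p_\ell)\bigr)$, which one verifies by noting that at any vertex $v$ of the right-hand side, the coefficient of $x^v$ in $p$ equals $\sum_\ell (p_\ell)_{v/2}^2 > 0$, so no cancellation occurs.)

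Now combining the two inclusions, $\mathrm{Newton}(g) \subseteq 2\,\conv(M)$. Invoking Reznick's lemma (the classical fact that for any SOS decomposition $g = \sum_i g_i^2$ one has $\mathrm{supp}(g_i) \subseteq \tfrac{1}{2}\mathrm{Newton}(g)$, established again by considering extreme monomials where cancellation cannot occur) gives $\mathrm{supp}(g_i) \subseteq \conv(M)$. Since $\mathrm{supp}(g_i) \subseteq \mathbb{N}^n$, the standing hypothesis $\conv(M) \cap \mathbb{N}^n = M$ forces $\mathrm{supp}(g_i) \subseteq M$. Therefore $g = \sum_i g_i^2 \in \Sigma(M)$, and by an identical argument $h \in \Sigma(M)$. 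Since $\Sigma(M)$ is clearly a convex cone contained in $\Sigma_{n,2d}$, this establishes the face property.

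The main obstacle is a clean justification of the two ``no-cancellation-at-vertices'' statements used above: that Newton polytopes of SOS polynomials behave additively under sums (so $\mathrm{Newton}(g) \subseteq \mathrm{Newton}(g+h)$ when both are SOS) and Reznick's containment $\mathrm{supp}(g_i) \subseteq \tfrac12 \mathrm{Newton}(g)$. Both reduce to the observation that a vertex $v$ of the Newton polytope of any SOS polynomial $p = \sum p_\ell^2$ must satisfy $v = 2\alpha$ for some $\alpha$ extreme in $\bigcup_\ell \mathrm{supp}(p_\ell)$, with coefficient $\sum_\ell (p_\ell)_\alpha^2 > 0$. I would state this as a short preliminary lemma rather than reprove it in detail, as it is a standard fact in the SOS literature.
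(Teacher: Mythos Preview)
The paper does not actually prove this theorem; it is stated without proof (the section explicitly says it only outlines the techniques and cites Permenter--Parrilo for details). So there is no ``paper's proof'' to compare against, and I can only assess your argument on its own merits.

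Your approach is correct and is essentially the standard one. The key engine is Reznick's observation that for any SOS representation $p=\sum_\ell p_\ell^2$, each $\mathrm{supp}(p_\ell)$ lies in $\tfrac{1}{2}\,\mathrm{Newton}(p)$, proved by the no-cancellation-at-vertices argument you sketch. One small simplification: you do not need the intermediate step $\mathrm{Newton}(g)\subseteq\mathrm{Newton}(f)$. Once you combine the decompositions into $f=\sum_i g_i^2+\sum_j h_j^2$, Reznick's lemma applied directly to this representation of $f$ gives $\mathrm{supp}(g_i)\subseteq \tfrac{1}{2}\,\mathrm{Newton}(f)\subseteq \conv(M)$, and the hypothesis $\conv(M)\cap\mathbb{N}^n=M$ finishes. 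This avoids having to invoke the full equality $\mathrm{Newton}(p)=2\,\conv\bigl(\bigcup_\ell\mathrm{supp}(p_\ell)\bigr)$ and keeps the argument to a single application of Reznick. Either way, the logic is sound.
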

In other words, if the convex hull $\conv(M)$ contains no grid points other than those already in $M$, then $\Sigma(M)$ is a face of $\Sigma_{n,2d}$.

\begin{theorem}[Type II face]
If $\Sigma(M)$ is a face of $\Sigma_{n,2d}$, then $\Sigma(M\setminus \beta)$ is a face of $\Sigma_{n,2d}$ for any $\beta\in M^+$. 
\end{theorem}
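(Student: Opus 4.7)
The plan is to prove this by exploiting the structure of $M^+$ together with the transitivity of faces: if $F_1 \unlhd F_2 \unlhd \KK$ then $F_1 \unlhd \KK$, so it suffices to show that $\Sigma(M\setminus\{\beta\})$ is a face of $\Sigma(M)$. The key observation is that for $\beta \in M^+$, membership of a polynomial in $\Sigma(M\setminus\{\beta\})$ can be detected by a \emph{single coefficient} of the polynomial, namely the coefficient of the monomial $x^{2\beta}$.

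To make this precise, first I would use the Gram matrix characterization to analyze the coefficient of $x^{2\beta}$ in any $f \in \Sigma(M)$. Writing $f = [x]_M^T Q [x]_M$ with $Q \succeq 0$, this coefficient equals
\[
\sum_{\substack{\alpha,\gamma \in M \\ \alpha+\gamma = 2\beta}} Q_{\alpha,\gamma}.
\]
The defining property $\beta \in M^+$ says exactly that the only pair $(\alpha,\gamma) \in M \times M$ with $(\alpha+\gamma)/2 = \beta$ is $\alpha=\gamma=\beta$. Hence the coefficient of $x^{2\beta}$ in $f$ is precisely $Q_{\beta,\beta}$, regardless of which PSD Gram representation is chosen.

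Next I would establish the equivalence: a polynomial $f \in \Sigma(M)$ lies in $\Sigma(M\setminus\{\beta\})$ if, and only if, the coefficient of $x^{2\beta}$ in $f$ is zero. The ``only if'' direction is immediate. For ``if'', given any PSD Gram representation $Q$ of $f$, the computation above forces $Q_{\beta,\beta}=0$; since $Q \succeq 0$, this vanishing diagonal entry forces the entire $\beta$-th row and column of $Q$ to vanish, and deleting them produces a PSD Gram matrix supported on $M\setminus\{\beta\}$ representing $f$.

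With this equivalence in hand, the face property is essentially immediate. Suppose $f_1,f_2 \in \Sigma(M)$ satisfy $f_1+f_2 \in \Sigma(M\setminus\{\beta\})$. Then the coefficient of $x^{2\beta}$ in $f_1+f_2$ is zero, so the coefficient of $x^{2\beta}$ in $f_1$ plus that in $f_2$ is zero. By the previous paragraph, each of these coefficients equals $(Q_i)_{\beta,\beta}$ for any PSD Gram representation $Q_i$ of $f_i$, which are individually nonnegative. Hence both must vanish, and the equivalence yields $f_i \in \Sigma(M\setminus\{\beta\})$. This shows $\Sigma(M\setminus\{\beta\})$ is a face of $\Sigma(M)$, and therefore of $\Sigma_{n,2d}$. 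The only subtle point is the identification of the $x^{2\beta}$-coefficient with $Q_{\beta,\beta}$, which is where the hypothesis $\beta \in M^+$ is essential; the rest is a standard PSD-diagonal-zeros-out-row-and-column argument.
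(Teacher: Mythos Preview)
The paper does not actually supply a proof of this theorem; it is stated without argument as part of a survey of results drawn from \cite{perm_sos}, with only the one-line remark that ``given a face $\Sigma(M)$, we can recursively make the face smaller by deleting any $\beta\in M^+$.'' Your proof is correct and is essentially the standard argument: the observation that $\beta\in M^+$ forces the coefficient of $x^{2\beta}$ in any $f\in\Sigma(M)$ to equal the diagonal Gram entry $Q_{\beta,\beta}$, together with the PSD zero-diagonal argument and transitivity of faces, is exactly what is needed. In fact your argument shows a bit more, namely that $\Sigma(M\setminus\{\beta\})$ is the face of $\Sigma(M)$ exposed by the linear functional $f\mapsto (\textrm{coefficient of }x^{2\beta})$, which is precisely the mechanism underlying the later Theorem on exposing vectors from the polyhedral set $\Lambda$.
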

Thus given a face $\Sigma(M)$, we can recursively make the face smaller by deleting any $\beta\in M^+$. 

Let us now turn to facial reduction. In the first step of facial reduction for \eqref{eqn:SOS}, we must find an exposing vector  $v\in\Sigma_{n,2d}^{*}$ that is orthogonal to all the affine constraints. Doing so in full generality is a difficult proposition. Instead, let us try to replace 
$\Sigma_{n,2d}^{*}$ by a polyhedral inner approximation. Then the search for $v$ is a linear program.
\begin{theorem}
The polyhedral set 
$$\Lambda= \Sigma(M)^\perp+\left\{\sum_{\alpha\in M^+} \lambda_{2\alpha}e_{2\alpha}: \lambda_{2\alpha}\geq 0\right\}$$
satisfies $\Lambda\subseteq \Sigma(M)^*.$
\end{theorem}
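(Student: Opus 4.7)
The plan is to verify the inclusion $\Lambda \subseteq \Sigma(M)^*$ by checking it on the two natural sets of generators of $\Lambda$, and then invoking convexity of the dual cone. First, the easy piece: any $v \in \Sigma(M)^\perp$ satisfies $\langle v,f\rangle = 0 \geq 0$ for every $f \in \Sigma(M)$, so $\Sigma(M)^\perp \subseteq \Sigma(M)^*$. The substantive content is therefore to show that $e_{2\alpha} \in \Sigma(M)^*$ for each $\alpha \in M^+$, i.e.\ that the coefficient of the monomial $x^{2\alpha}$ is nonnegative on every $f\in \Sigma(M)$.

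To do this, I would write an arbitrary $f\in \Sigma(M)$ as $f = \sum_{i} f_i^2$, where each $f_i = \sum_{\beta \in M} f_{i,\beta}\,x^{\beta}$. Expanding the squares, the coefficient of $x^{2\alpha}$ in $f$ equals
\[
\sum_{i}\ \sum_{\substack{\beta,\gamma\in M\\ \beta+\gamma = 2\alpha}} f_{i,\beta}\,f_{i,\gamma}.
\]
The key observation now is the defining property of $M^+$: since $\alpha \in M^+ \subseteq M$, $\alpha$ cannot be written as $(\beta+\gamma)/2$ with $\beta\neq\gamma$ in $M$. Hence the only pair $(\beta,\gamma)\in M\times M$ with $\beta+\gamma=2\alpha$ is $(\alpha,\alpha)$, and the displayed sum collapses to $\sum_i f_{i,\alpha}^2 \geq 0$. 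This shows $\langle e_{2\alpha}, f\rangle\geq 0$, and therefore $e_{2\alpha}\in \Sigma(M)^*$.

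Finally, $\Sigma(M)^*$ is a convex cone, so it is closed under nonnegative linear combinations of its elements and under addition of elements of $\Sigma(M)^\perp$. Combining the two steps yields
\[
\Sigma(M)^\perp + \Bigl\{\sum_{\alpha\in M^+}\lambda_{2\alpha}e_{2\alpha} : \lambda_{2\alpha}\geq 0\Bigr\}\ \subseteq\ \Sigma(M)^*,
\]
which is exactly the claim. The only real obstacle is the combinatorial identification of which pairs in $M\times M$ sum to $2\alpha$; everything else is a direct unpacking of definitions. It is worth noting in the commentary that this is the reason the construction is restricted to $M^+$ rather than all of $M$: for $\alpha\in M\setminus M^+$ there exist nontrivial cross terms $f_{i,\beta}f_{i,\gamma}$ in the $x^{2\alpha}$ coefficient that can be negative, so $e_{2\alpha}$ need not lie in $\Sigma(M)^*$.
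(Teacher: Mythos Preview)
The paper does not actually supply a proof of this theorem; the section explicitly says it only outlines the techniques and defers details to the cited reference. Your argument is correct and is exactly the natural one: reduce to showing $e_{2\alpha}\in\Sigma(M)^*$ for $\alpha\in M^+$, expand a generic sum of squares over $M$, and use the definition of $M^+$ to see that the only pair $(\beta,\gamma)\in M\times M$ with $\beta+\gamma=2\alpha$ is $(\alpha,\alpha)$, forcing the $x^{2\alpha}$ coefficient to be $\sum_i f_{i,\alpha}^2\geq 0$.
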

Thus if we can find $v\in \Lambda$ that is orthogonal to the affine constraints \eqref{eqn:SOS}, then we can use $v$ to expose a face of $\Sigma_{n,2d}$ containing the feasible region. Remarkably, this face can indeed be represented by eliminating monomials from $M$.

\begin{theorem}
Consider a vector $$v=p+\sum_{\alpha\in M^+} \lambda_{2\alpha}e_{2\alpha},$$ for some $p\in \Sigma(M)^\perp$ and nonnegative numbers $\lambda_{2\alpha}\geq 0$. Define the monomial set $\mathcal{I}:=\{\alpha\in M^+: \lambda_{2\alpha}>0\}$. Then the face 
$\Sigma(M)\cap v^{\perp}$ coincides with $\Sigma(M\setminus \mathcal{I})$.
\end{theorem}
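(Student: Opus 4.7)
The plan is to prove the equality $\Sigma(M)\cap v^\perp = \Sigma(M\setminus\mathcal{I})$ by double inclusion, using a single central simplification. For any $f\in\Sigma(M)$, the hypothesis $p\in\Sigma(M)^\perp$ gives $\langle f,p\rangle=0$, so that
\[
\langle f,v\rangle \;=\; \sum_{\alpha\in\mathcal{I}}\lambda_{2\alpha}\,f_{2\alpha},
\]
where $f_{2\alpha}$ denotes the coefficient of $x^{2\alpha}$ in $f$. Thus orthogonality to $v$ reduces to a weighted sum of specific coefficients of $f$, and the task becomes controlling these coefficients through Gram-matrix representations.

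For the inclusion $\Sigma(M\setminus\mathcal{I})\subseteq\Sigma(M)\cap v^\perp$, I would take $f\in\Sigma(M\setminus\mathcal{I})$ and fix a representation $f=[x]_{M\setminus\mathcal{I}}^T Q\,[x]_{M\setminus\mathcal{I}}$ with $Q\succeq 0$. Padding $Q$ with zero rows and columns puts $f$ into $\Sigma(M)$ as well. The key combinatorial observation, which uses $\mathcal{I}\subseteq M^+$, is that for every $\alpha\in\mathcal{I}$ the only way to write $2\alpha=\beta+\gamma$ with $\beta,\gamma\in M$ is $\beta=\gamma=\alpha$; since $\alpha\notin M\setminus\mathcal{I}$, no such term appears in the expansion, forcing $f_{2\alpha}=0$. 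Consequently $\langle f,v\rangle=0$.

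For the reverse inclusion, I would take $f\in\Sigma(M)\cap v^\perp$ with $f=[x]_M^T Q\,[x]_M$ and $Q\succeq 0$. The same midpoint-free property of $M^+$ now yields the sharper identity $f_{2\alpha}=Q_{\alpha\alpha}$ for each $\alpha\in\mathcal{I}$, so that the orthogonality relation becomes
\[
0 \;=\; \langle f,v\rangle \;=\; \sum_{\alpha\in\mathcal{I}}\lambda_{2\alpha}\,Q_{\alpha\alpha}.
\]
Since $\lambda_{2\alpha}>0$ and $Q_{\alpha\alpha}\geq 0$ for every $\alpha\in\mathcal{I}$, each diagonal entry $Q_{\alpha\alpha}$ must vanish. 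Invoking the elementary fact that a positive semidefinite matrix with a zero diagonal entry has the corresponding row and column identically zero, I would then delete every row and column of $Q$ indexed by $\mathcal{I}$ without changing the quadratic form, thereby certifying $f\in\Sigma(M\setminus\mathcal{I})$.

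The main obstacle is the combinatorial identification $f_{2\alpha}=Q_{\alpha\alpha}$: this depends crucially on $\alpha\in M^+$, which forbids any other pair $\beta,\gamma\in M$ with $\beta+\gamma=2\alpha$ from contributing to the coefficient of $x^{2\alpha}$. Once this bookkeeping is in place, the remainder of the argument uses only positive semidefiniteness and needs no further structural input.
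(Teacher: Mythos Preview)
Your argument is correct and is precisely the natural one. The paper itself states this theorem without proof (it is attributed to \cite{perm_sos}), so there is no proof in the text to compare against; but the Gram-matrix approach you outline---using that $\alpha\in M^+$ forces $f_{2\alpha}=Q_{\alpha\alpha}$ for any PSD Gram matrix $Q$, and then killing the $\mathcal{I}$-indexed rows and columns via $Q\succeq 0$---is exactly the intended mechanism and requires no further ingredients.
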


Thus we can inductively use this procedure to eliminate monomials. At
the end, one would hope that we would be left with a small dimensional \SDP to solve. Promising numerical results and further explanations of methods of this type can be found in \cite{perm, perm_sos, coj_spars,waki_mur_sparse,WakiKimKojimaMura:06}.

\section{Commentary}
\subsection*{Quadratic assignment problem, \QAPp}
Many survey articles and books have appeared on the \QAPp,
e.g.,~\cite{prw:93,PardWolk:94,MR1490831,MR2267435}.
More recent work on implementation of \SDP relaxations include
\cite{MR3347884,OliveiraWolkXu:15,RongZhu:07}.
That the quadratic assignment problem is
NP-hard is shown in~\cite{SahGon:76}.
The elegant trace formulation we used was introduced in~\cite{Edw77}.

The classic \emph{Nugent test set} for QAP is given in
\cite{Nug:68}.\footnote{It is
maintained within QAPLIB~\cite{BurKarRen91} currently online.}
These problems have proven to be extremely hard to solve to optimality,
see e.g.,~\cite{MR1490831}.
The difficulty of these problems is illustrated in the fact that many of
them were not solved for $30$ odd years, see e.g.,~\cite{MR2004391}.

The semi-definite relaxation described here was introduced in
\cite{KaReWoZh:94}. It was derived by using the Lagrangian relaxation
after modelling the permutation matrix constraint by various
quadratic constraints. The
semi-definite relaxation is then the dual of the Lagrangian relaxation,
i.e.,~\emph{the dual of the dual}.
Application of \FR then results in the surprisingly simplified 
\textdef{gangster operator} formulation.

This gangster formulation along with symmetry for certain \QAP models is
exploited in \cite{MR3347884,MR2546331} to significantly increase the
size of \QAP problems that can be solved.
Other relaxations of QAP based on e.g.,~eigenvalue bounds are studied in 
e.g.,~\cite{FiBuRe:87,MR878775,AnsBrix:99}.

\subsection*{Graph partitioning, \GPp}
The \textdef{graph partitioning, \GPp}, problem is very similar to the
\QAP in that it involves a trace quadratic objective with a $0,1$ matrix
variable $X$, i.e. the matrix whose columns are the incidence vectors of
the sets for the partition. A similar successful \SDP relaxation can be
found~\cite{WoZh:96}. More recently, successful bounding results have
been found in~\cite{HaoWangPongWolk:14,HaoSun:16,MR3060945}.

\subsection*{Second lift of Max-Cut}
The second lifting from Section \ref{sect:secliftMC}
is derived in~\cite{Anjosthesis,AnWo:00,MR1846167} but in a different
way, i.e. using the nullspace of the barycenter approach.
The bounds found were extremely successful and, in fact, found the
optimal solution of the \MC in almost all but very special cases.
The algorithm used for the \SDP relaxation was the spectral bundle
approach~\cite{HelmbergRendl:97} and only problems of limited size could be
solved. More recently an ADMM approach was much more successful in
solving larger problems in \cite{TangWolk:17}.

\subsection*{Lifts of combinatorial problems}
The \SDP lifting of combinatorial regions described in
Section~\ref{sect:sdplifts} is standard; see for \cite{lev_book} for
many examples and references. The material on the minimal face of the \SDP lift follows~\cite{Tun:01}, though our explanation here is stated in dual terms, i.e. using exposing vectors.

\subsection*{Monomial elimination from SOS problems}
The topic of eliminating monomials from sum of squares problems has a rich history. The section in the current text follows entirely the exposition in   \cite{perm_sos}. The technique of solving linear programs in order to approximate exposing vectors was extensively studied in  \cite{perm}. Important earlier references on monomial elimination include \cite{coj_spars,waki_mur_sparse,WakiKimKojimaMura:06}. For an exposition of how to use SOS hierarchies to solve polynomial optimization problems see the monograph \cite{JL_book}.

\backmatter  
\begin{acknowledgements}
\addcontentsline{toc}{chapter}{Acknowledgements} %
We would like to thank  Jiyoung (Haesol)  Im for her helpful comments and
help with proofreading the manuscript. Research of the first author was partially supported by the AFOSR YIP award FA9550-15-1-0237. Research of the second author was supported by The Natural Sciences and Engineering Research Council of Canada.
\end{acknowledgements}

\chapter*{}
\printindex
\label{ind:index}
\addcontentsline{toc}{chapter}{Index}

\bibliographystyle{plain}
\def\cprime{$'$} \def\cprime{$'$} \def\cprime{$'$}
  \def\udot#1{\ifmmode\oalign{$#1$\crcr\hidewidth.\hidewidth
  }\else\oalign{#1\crcr\hidewidth.\hidewidth}\fi} \def\cprime{$'$}
  \def\cprime{$'$} \def\cprime{$'$} \def\cprime{$'$}

\end{document}